\titleformat{\section}{\Large\bfseries}{\thesection.}{4pt}{}
\titleformat{\subsection}{\large\bfseries}{\thesection.\arabic{subsection}.}{4pt}{}
\titleformat{\subsubsection}{\bfseries}{\thesection.\arabic{subsection}.\arabic{subsubsection}.}{4pt}{}
\titleformat*{\paragraph}{\bfseries}
\titleformat*{\subparagraph}{\bfseries}
\newtheorem{theorem}{Theorem}[section]
\newtheorem{lemma}[theorem]{Lemma}
\newtheorem{proposition}[theorem]{Proposition}
\theoremstyle{definition}
\newtheorem{definition}[theorem]{Definition}
\newtheorem{remark}[theorem]{Remark}
\newcommand{\RN}{\mathbb{R}^N}
\newcommand{\Rb}{\mathbb{R}}
\newcommand{\Cc}{\mathcal{C}}
\newcommand{\Dc}{\mathcal{D}}
\newcommand{\Sc}{\mathcal{S}}
\newcommand{\Uc}{\mathcal{U}}
\newcommand{\Vc}{\mathcal{V}}
\newcommand{\Ls}{\mathscr{L}}
\numberwithin{equation}{section}
\title[Blowup for a critical nonlinear heat equation] 
      {Blowup solutions for a nonlinear heat equation involving a critical power nonlinear gradient term}
\author[T. Ghoul, V. T. Nguyen, H. Zaag]{}
\subjclass{Primary: 35K58, 35K55; Secondary: 35B40, 35B44.}
 \keywords{Finite-time blowup, Blowup profile, Stability, Semilinear heat equations.}
\thanks{H. Zaag is supported by the ERC Advanced Grant no. 291214, BLOWDISOL and by the ANR project ANA\'E ref. ANR-13-BS01-0010-03. \\ -----------------\\ \today}
\begin{document}
\maketitle

\centerline{\scshape Tej-Eddine Ghoul$^\dagger$, Van Tien Nguyen$^\dagger$ and Hatem Zaag$^\ast$}
\medskip
{\footnotesize
 \centerline{$^\dagger$New York University in Abu Dhabi, P.O. Box 129188, Abu Dhabi, United Arab Emirates.}
  \centerline{$^\ast$Universit\'e Paris 13, Sorbonne Paris Cit\'e, LAGA, CNRS (UMR 7539), F-93430, Villetaneuse, France.}
}

%
%
%

\bigskip

\begin{abstract} We consider the following exponential reaction-diffusion equation involving a nonlinear gradient term:
$$\partial_t U = \Delta U + \alpha|\nabla U|^2 + e^U,\quad (x, t)\in\RN\times[0,T), \quad \alpha > -1.$$
We construct for this equation a solution which blows up in finite time $T > 0$ and satisfies some prescribed asymptotic behavior. We also show that the constructed solution and its gradient blow up in finite time $T$ simultaneously at the origin, and find precisely a description of its final blowup profile. It happens that the quadratic gradient term is critical in some senses, resulting in the change of the final blowup profile in comparison with the case $\alpha = 0$. The proof of the construction inspired by the method of Merle and Zaag in 1997, relies on the reduction of the problem to a finite dimensional one, and uses the index theory to conclude. One of the major difficulties arising in the proof is that outside the \textit{blowup region}, the spectrum of the linearized operator around the profile can never be made negative. Truly new ideas are needed to achieve the control of the outer part of the solution.  Thanks to a geometrical interpretation of the parameters of the finite dimensional problem in terms of the blowup time and the blowup point, we obtain the stability of the constructed solution with respect to perturbations of the initial data.
\end{abstract}

\section{Introduction.}
We are interested in the following nonlinear heat equation: 
\begin{equation}\label{equ:problem}
\left\{
\begin{array}{rcl}
\partial_t U &=& \Delta U + \alpha |\nabla U|^r + e^U,\\
U(0) &=& U_0,
\end{array}
\right.
\end{equation}
where $U(t): x \in \mathbb{R}^N \to U(x,t) \in \mathbb{R}$, $\Delta$ and $\nabla$ stand for the Laplacian and the gradient in $\mathbb{R}^N$ with $N \geq 1$,
$$r = 2 \quad \text{and}\quad \alpha > -1.$$ 
Equation \eqref{equ:problem} can be viewed as the limiting case of the following critical splitting as $p \to +\infty$, which was introduced by Chipot and Weissler \cite{CWjma89}:
\begin{equation}\label{equ:sheGra}
\partial_t U = \Delta U + \alpha |\nabla U|^r + |U|^{p-1}U, \quad \text{with}\;\; p > 1\; \text{and}\;r = \frac{2p}{p + 1}.
\end{equation}

The Cauchy problem for \eqref{equ:problem} can be solved in several functional spaces $\mathcal{F}$, for example $\mathcal{F} = W^{1,\infty}(\RN)$ or in a special affine space $\mathcal{F} = \mathcal{H}_{a}$ for some positive constant $a$ with
\begin{equation}\label{def:Ha1a2}
\mathcal{H}_{a} = \{u \in \psi + W^{1, \infty}(\RN) \; \text{with}\; \psi(x) = -\ln(1 + a|x|^2)\}.
\end{equation}
In particular, the problem \eqref{equ:problem} has  a unique classical solution $U(t) \in \mathcal{F}$ defined on $[0, T)$ with $T \leq +\infty$ (see Remark \ref{rem:Cauchypro} for more details). In the case $T = +\infty$, we have the global existence for $U(t)$; on the contrary, i.e $T < +\infty$, we say that the solution $U(t)$ blows up in finite time $T$, namely
\begin{align*}
&\lim_{t \to T}\|U(t)\|_{W^{1, \infty}(\RN)} = +\infty,\\
\text{or}\;\;&\lim_{t \to T}\|U(t) - \psi\|_{W^{1, \infty}(\RN)} = +\infty \;\;\text{in the second case.}
\end{align*}

As for the case of equation \eqref{equ:problem}, the value $r = 2$ is a critical exponent for different reasons ($r < 2$ and $r > 2$ correspond to the sub-critical and super-critical cases). One reason is that, when $r = 2$, equation \eqref{equ:problem} is invariant under the following transformation:
\begin{equation}\label{equ:invp1}
\forall \lambda > 0, \quad U_\lambda(x, t) = 2\ln\lambda + U(\lambda x, \lambda^2 t),
\end{equation}
as for the equation without the gradient term, i.e $\alpha = 0$. Recalling that equation \eqref{equ:sheGra} is invariant under the transformation
\begin{equation}\label{equ:invp2}
\forall \lambda > 0, \quad U_\lambda(x, t) = \lambda^\frac{2}{p-1} U(\lambda x, \lambda^2 t).
\end{equation}

Let us now sketch the main results for the case of the equation
\begin{equation}\label{eq:expU}
\partial_t U = \Delta U + e^U.
\end{equation}
One of the first result for the finite-time blow up problem \eqref{eq:expU} is due to Friedman and McLoed \cite{FM85iumj} who proved the following upper and lower bounds for the blowup rate of $U$ under some conditions on the initial data,
\begin{equation}\label{equ:blrate}
-c \leq U(0,t) + \ln(T-t)\leq C,
\end{equation}
for some $c, C$ positive (see also Berbenes and Eberly \cite{BEbook89} for this estimate).

The study of the blowup behavior for solution \eqref{eq:expU} is done through the introduction of similarity variables
\begin{equation}\label{def:changeVar}
W_a(y,s) = U(x,t) + \ln(T-t), \quad y = \frac{x - a}{\sqrt{T-t}}, \quad s = -\ln (T-t),
\end{equation}
where $a$ may or not be a blowup point for $U$. From \eqref{eq:expU}, we see that $W_a$ satisfies the following equation:  for all $(y,s) \in \RN \times [-\ln T, +\infty)$,
\begin{equation*}
\partial_sW_a = \Delta W_a - \frac{y}{2}\cdot \nabla W_a + e^{W_a} - 1.
\end{equation*}
Since $s \to +\infty$ as $t \to T$, the change of variables \eqref{def:changeVar} converts any question about the blowup of $U$ into one about the large time asymptotic of $W_a$.

According to Berbenes and Eberly \cite{BEbook89}, we know that if $U(x,t)$ is a solution of \eqref{eq:expU} which blows up at $x = a$ and $t = T$, then 
\begin{equation}\label{equ:bllimit}
\lim_{t \to T} \big[U(a + y\sqrt{T-t},t) + \ln(T-t)\big] = \lim_{s \to +\infty} W_a(y,s) = 0,
\end{equation}
uniformly on compact sets $|y| \leq R$. 

This estimate has been refined until the higher order by Bebernes and Briche \cite{BBsima92}, Herrero and Vel\'azquez \cite{HVaihn93} in one dimensional case. More precisely, they classified the behavior of $W_a$ for $|y|$ bounded, and showed that one of the following cases occurs:
\begin{equation}\label{eq:case1class}
\sup_{|y| \leq R} \left| W_a(y,s) + \frac{1}{4s}(y^2 - 2)\right| =  o\left(\frac{1}{s}\right),
\end{equation}
or
\begin{equation*}
\sup_{|y| \leq R}|W_a(y,s)| = \mathcal{O}(e^{-\mu s}) \quad \text{for some $\mu > 0$,}
\end{equation*}
(the exponential convergence has been refined up to order $1$ in \cite{HVaihn93}). It is remarkable that a similar result can be extended to higher dimensional cases by using the technique of \cite{VELcpde92} for the equation
\begin{equation}\label{eq:Up}
\partial_t U = \Delta U + |U|^{p-1}U.
\end{equation}

When \eqref{eq:case1class} occurs, the authors of \cite{BBsima92} established the following blowup profile in the variable $z = \frac{y}{\sqrt{s}}$ (which is the intermediate scale that separates the regular and singular parts)
\begin{equation}\label{equ:blprofile}
\lim_{t \to T} \big[U(a + z \sqrt{(T-t)|\ln(T-t)|}, t) + \ln(T-t)\big] = \lim_{s \to +\infty}W_a(z\sqrt{s},s) = \Phi_0(z),
\end{equation}
uniformly on compacts in $z$, where 
\begin{equation}\label{def:Phi0xi}
\Phi_0(z)=  -\ln\left( 1 + \frac{|z|^2}{4}\right).
\end{equation}
Note that \eqref{equ:blprofile} was formally obtained in \cite{Doldsjam89}, \cite{Doldqjmam85} and \cite{GHVsovi91} by means of the method of matched asymptotic expansions. The asymptotic behavior \eqref{equ:blprofile} leads to the limiting profile in the $U(x,t)$ variable, in the sense that $U(x,t) \to U^*_0(x)$ when $t \to T$ if $x \ne a$ and $x$ is in the neighborhood of $a$ with
\begin{equation}\label{equ:finalprofile}
U^*_0(x) \sim -2\ln |x - a| + \ln|\ln|x - a|| +\ln 8.
\end{equation}
In \cite{Breiumj90} and \cite{Brejde92}, Bressan proved the existence of solutions to \eqref{eq:expU} which blow up in finite time and verify the behavior \eqref{equ:blprofile}. He also obtained the stability of such blowup behavior with respect to small perturbations of the the initial data. For more results related to equation \eqref{eq:expU}, see Fila and Pulkkinen \cite{FPtmj08}, \cite{PULmmas11} and the references therein.

\bigskip

By considering $\alpha \neq 0$ and $r = 2$ in \eqref{equ:problem}, we want to ask whether the nonlinear gradient term appearing in the equation affects the blowup profile of the case $\alpha = 0$, i.e equation \eqref{eq:expU}. Note that the case $r < 2$ can be considered as a perturbation of equation \eqref{eq:expU} because the nonlinear gradient term has a sub-critical size in the sense that in the similarity variables setting \eqref{def:changeVar}, equation \eqref{equ:problem} yields
\begin{equation}\label{eq:perSHE}
\partial_sW_a = \Delta W_a - \frac{y}{2}\cdot \nabla W_a + \alpha e^{-\delta s} |\nabla W_a|^r + e^{W_a} - 1, \quad\text{with}\;\; \delta = 1 - \frac{r}{2}.
\end{equation}
This gives another explanation to the fact that problem \eqref{equ:problem} is critical when $r = 2$. In fact, our problem is motivated by the work of Tayachi and Zaag \cite{TZpre15} treated for equation \eqref{equ:sheGra}. In that paper, the authors construct for \eqref{equ:sheGra} a solution which blows up in finite time $T$ only at the origin and give a sharp description of its blowup profile in the case where $\alpha > 0$ and $p > 3$. The originality of \cite{TZpre15} lays in the fact that the constructed solution does not exist in the case of the standard nonlinear heat equation \eqref{eq:Up}. In particular, their solution has a profile depending on the variable 
\begin{equation}\label{def:varTZ}
z = \frac{x}{|\ln (T-t)|^\mu \sqrt{T-t}} \quad \text{with}\quad \mu = \frac{p+1}{2(p-1)}> \frac{1}{2},
\end{equation}
which is different from the results for equation \eqref{eq:Up} by Herrero and Vel\'azquez \cite{HVaihn93}, \cite{VELcpde92}, \cite{VELtams93}, Bricmont and Kupianen \cite{BKnon94}, where the blowup profiles depend on the reduced variables
$$z = \frac{x}{|\ln(T-t)|^{\frac 12}\sqrt{T-t}} \quad \text{or} \quad z = \frac{x}{(T-t)^{\frac 1{2m}}} \;\; \text{where $m \geq  2$ is an integer.}$$
This evidently shows the effect of the forcing gradient term in equation \eqref{equ:sheGra} with $\alpha > 0$ in the equation. It is worth to mention the work by Ebde and Zaag \cite{EZsema11} where the authors considered equation \eqref{equ:sheGra} in the case where $r$ is sub-critical, i.e $r < \frac{2p}{p+1}$. They showed that the involved nonlinear gradient term in \eqref{equ:sheGra} does not affect to the final blowup profile, leading to the same result (construction and stability of a solution whose blowup profile depends on the variable $z = \frac{x}{|\ln (T-t)|^\frac 12 \sqrt{T-t}}$) for the standard nonlinear heat equation \eqref{eq:Up} obtained in \cite{BKnon94} and \cite{MZdm97}. For more results related to \eqref{equ:sheGra}, see Galaktionov and V\'azquez \cite{GVsiam93}, \cite{GVjde96}, Snoussi, Tayachi and Weissler \cite{SThmj07}, Chipot and Weissler \cite{CWjma89}, Fila \cite{FILpams91}, Souplet, Tayachi and Weissler \cite{STWiumj96}, \cite{SOUjde01}, \cite{STcm01}, Chleb\'ik, Fila and Quittner \cite{CFQdcds03}.\\

In this paper, we aim at constructing a solution for equation \eqref{equ:problem} and giving precisely the description of its blowup profile. Our main result is the following:

\begin{theorem}[\textbf{Existence of a blowup solution for equation \eqref{equ:problem} with the description of its profile}] \label{theo:1} Let $\alpha > -1$ and $r = 2$, there exists $T_0 \in (0,1)$ such that for each $T \in (0, T_0]$, equation \eqref{equ:problem} has a solution $U(x,t)$ such that $U$ and $\nabla U$ blow up in finite time $T$ at the origin simultaneously. Moreover, 
\begin{itemize}
\item[(i)] For all $t \in [0, T)$, 
\begin{equation}
\left\|(T-t)e^{U(x,t)}  - e^{\Phi_{\alpha}\left(z\right)} \right\|_{W^{1,\infty}(\RN)} \leq \frac{C}{\sqrt{|\ln(T-t)|}},\label{equ:behTh1}
\end{equation}
where $C$ is some positive constant and
\begin{equation}\label{def:Phi_alpha}
\Phi_\alpha(z) = -\ln\left(1 + \frac{|z|^2}{4 + 4\alpha}\right), \quad z = \frac{x}{\sqrt{(T-t)|\ln (T-t)|}}.
\end{equation}
\item[(ii)] The functions $e^U$ and $\nabla U$ blow up at the origin and only there.
\item[(iii)] There exists a function $U^*_\alpha(x)$ defined on $\RN \setminus \{0\}$ such that $U(x,t) \to U^*_\alpha(x)$ and $\nabla U(x,t) \to \nabla U^*_\alpha(x)$ as $t \to T$, uniformly on compact sets of $\RN \setminus \{0\}$, where
\begin{equation}\label{equ:finalprofileUlpha}
U^*_\alpha(x) \sim \ln \left(\frac{(8 + 8\alpha)|\ln |x||}{|x|^2}\right) \quad \text{as $|x| \to 0$,}
\end{equation}
and 
\begin{equation}\label{estNalUalpha}
|\nabla U^*_\alpha(x)| \leq \frac{C}{|x|} \quad \text{as $|x| \to 0$.}
\end{equation}
\end{itemize}
\end{theorem}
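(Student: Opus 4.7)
The plan is to work in the self-similar variables from \eqref{def:changeVar}: setting $W(y,s) = U(x,t) + \ln(T-t)$ with $y = x/\sqrt{T-t}$ and $s = -\ln(T-t)$, equation \eqref{equ:problem} with $r=2$ becomes
\begin{equation*}
\partial_s W = \Delta W - \tfrac{1}{2}\, y \cdot \nabla W + \alpha|\nabla W|^2 + e^W - 1,
\end{equation*}
and proving item (i) reduces to constructing a global-in-$s$ solution with $W(y,s) - \Phi_\alpha(y/\sqrt s) \to 0$ in a suitable norm as $s\to+\infty$, where $\Phi_\alpha$ is given by \eqref{def:Phi_alpha}. First I would construct a refined approximate profile of the form
\begin{equation*}
\varphi(y,s) = \Phi_\alpha\!\left(\tfrac{y}{\sqrt{s}}\right) + \frac{A(y/\sqrt{s})}{s},
\end{equation*}
choosing the $1/s$-correction $A$ so that the residual obtained by plugging $\varphi$ into the $W$-equation is $o(1/s)$ pointwise. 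The critical scaling \eqref{equ:invp1} forces the quadratic gradient term $\alpha|\nabla W|^2$ to contribute at leading order to the ODE that determines the shape on the intermediate variable $z=y/\sqrt{s}$; this is precisely the algebraic mechanism that shifts the coefficient from $4$ (in the classical case \eqref{def:Phi0xi}) to $4+4\alpha$ in $\Phi_\alpha$.

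Next I would write $W = \varphi + q$ and derive a perturbed equation of the form
\begin{equation*}
\partial_s q \;=\; \mathcal{L} q \;+\; V(y,s)\,q \;+\; B(q,\nabla q) \;+\; R(y,s),
\end{equation*}
where $\mathcal{L} = \Delta - \tfrac{y}{2}\cdot\nabla + 1$ is self-adjoint in $L^2_\rho(\RN)$ with weight $\rho(y) = (4\pi)^{-N/2} e^{-|y|^2/4}$, $V$ collects the linear terms coming from $e^\varphi - 1$ together with the linearization of the gradient nonlinearity at $\nabla\varphi$, $B$ is the quadratic remainder (including the critical piece $\alpha|\nabla q|^2$), and $R$ is the small residual produced above. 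The spectrum of $\mathcal{L}$ is $\{1-k/2:k\in\mathbb{N}\}$, so the eigenvalues $1$ and $1/2$ are unstable (total dimension $N+1$), the eigenvalue $0$ is null (dimension $N(N+1)/2$, generating logarithmic degeneracies), and the rest are strictly negative.

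The heart of the argument is the Merle–Zaag-type reduction to a finite-dimensional problem. I would decompose $q$ onto these eigenspaces and define a shrinking set $\mathcal{V}(s)$ by sharp quantitative bounds on each finite-dimensional mode, on the negative part in $L^2_\rho$, on the outer cutoff $\chi_{\{|y|\geq K\sqrt{s}\}}\,q$, and on $\|q\|_{W^{1,\infty}}$ (the latter is needed to handle the gradient nonlinearity and to eventually recover \eqref{estNalUalpha}). Initial data are taken from an $(N+1)$-parameter family $(d_0,d_1)\in\mathbb{R}^{N+1}$ matched to the $N+1$ unstable directions. A Brouwer/degree argument then selects parameters for which the trajectory remains trapped in $\mathcal{V}(s)$ for all $s\geq -\ln T$, the exit condition being that only the unstable components can touch the boundary of $\mathcal{V}(s)$.

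The main obstacle, exactly as flagged in the abstract, is propagating the bound on the outer part $|y|\geq K\sqrt s$: there, the potential $V = e^\varphi - 1 + \mathrm{(gradient\ terms)}$ no longer provides any spectral gap for $\mathcal{L}+V$, so the naive $L^2_\rho$-semigroup approach fails outside the blowup region. I would address this by returning to the original variables $(x,t)$ on the outer region, using parabolic regularity together with a suitable Cole–Hopf-type transformation (motivated by the identity $\Delta U + \alpha|\nabla U|^2 = (1+\alpha)^{-1} e^{-\alpha U}\Delta e^{\alpha U}$ when $\alpha\ne 0$) to rewrite the gradient nonlinearity in divergence form, then matching with the inner estimates on the boundary $|y|\simeq K\sqrt s$. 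Once the trapping argument closes, \eqref{equ:behTh1} is a direct reformulation of the shrinking-set bounds via \eqref{def:changeVar}. Items (ii) and (iii) then follow by an ODE-matching argument on the intermediate scale $|x|\simeq\sqrt{(T-t)|\ln(T-t)|}$ as in \cite{MZdm97} and \cite{TZpre15}, integrating the inner profile up to $t=T$; \eqref{estNalUalpha} is extracted from the same procedure using the $W^{1,\infty}$ control built into $\mathcal{V}(s)$. Finally, stability is obtained by identifying $(d_0,d_1)$ geometrically with $(T,a)$: a small $W^{1,\infty}$ perturbation of the initial data can be absorbed by a slight modification of the blowup time and point.
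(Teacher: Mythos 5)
Your proposal identifies the correct obstruction (the lack of a spectral gap for $\Ls+V$ on the outer region $|y|\gtrsim K\sqrt s$ when working in the $W$-variable) but your fix diverges from the paper's and leaves a genuine gap. The paper \emph{never} linearizes $W$ around $\Phi_\alpha$ (or around any corrected profile $\Phi_\alpha + A/s$). It passes immediately to $Z = e^{W}$, which solves $\partial_s Z = \Delta Z - \tfrac{y}{2}\cdot\nabla Z + (\alpha-1)\tfrac{|\nabla Z|^2}{Z} + Z^2 - Z$, and sets $Q = Z - \psi_\alpha$ with $\psi_\alpha \approx e^{\Phi_\alpha}$. In the $Q$-equation the linear potential $V = 2(\psi_\alpha - 1)$ tends to $-2$ outside the blowup region, so $\Ls+V$ has a genuine negative spectral gap there --- this \emph{single substitution} resolves the difficulty, and there is no need for a separate argument to patch the outer region by returning to $(x,t)$ and transforming. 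By contrast, your ansatz $W = \varphi + q$ with $\varphi = \Phi_\alpha + A/s$ produces, in the $q$-equation, the full critical term $\alpha|\nabla q|^2$ plus cross terms, and a potential whose large-$|y|$ behavior is controlled only by $0+\epsilon$; the paper states explicitly this cannot be closed.

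Your proposed Cole--Hopf patch is a different idea from the paper's and is not carried through. First, the identity should read $\Delta U + \alpha|\nabla U|^2 = \alpha^{-1}e^{-\alpha U}\Delta e^{\alpha U}$, not $(1+\alpha)^{-1}\cdots$. Second, setting $v=e^{\alpha U}$ converts the equation to $\partial_t v = \Delta v + \alpha v^{1+1/\alpha}$, which for $\alpha\in(-1,0)$ has a negative power nonlinearity and for $\alpha=0$ is degenerate, so this transformation cannot be used uniformly in the stated range $\alpha>-1$; the paper's $Z=e^W$ substitution works for all $\alpha>-1$. Third, even granting the transformation, you would need to match the outer $v$-estimates to the inner $q$-estimates on $|y|\simeq K\sqrt s$, and the difficulty is precisely that in the inner region the remaining gradient term cannot be controlled by a two-region (inner/outer) decomposition alone. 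The paper's actual shrinking set $\Sc^*$ uses a \emph{three}-region decomposition $\Dc_1$, $\Dc_2$, $\Dc_3$ with a rescaled solution $\Uc(x,\xi,\tau)$ on the intermediate zone, and it is the pointwise gradient bounds coming from $\Dc_2$ (\emph{e.g.} $|\nabla_\xi\Uc|\lesssim |\ln\theta(x)|^{-1/2}$) and $\Dc_3$ that allow the term $(\alpha-1)\tfrac{|\nabla Q+\nabla\psi_\alpha|^2}{Q+\psi_\alpha}$ to be closed in the \textit{a priori} estimates. Your proposal does not mention this intermediate scale.

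Finally, item $(ii)$ is not a routine ODE-matching consequence. The single-point blowup of $e^U$ and $\nabla U$ relies on a new \emph{no blowup under some threshold} criterion (Proposition \ref{prop:noblowup} in the paper, a Giga--Kohn-type result adapted to a parabolic inequality with a quadratic gradient term and an exponential source), applied to the rescaled function $\Uc(x_0,\xi,\tau)$ at every $x_0\ne 0$. You would have to either prove such a criterion or supply a substitute; it does not appear anywhere in your sketch. The derivation of \eqref{equ:finalprofileUlpha} and \eqref{estNalUalpha} in item $(iii)$ then uses $\Dc_2$-estimates at $\tau\to 1$, again depending on the intermediate-region analysis your sketch omits.
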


\begin{remark}Although our problem can be considered as the limiting case of \eqref{equ:sheGra} treated in \cite{TZpre15}, the construction is far from a simple adaptation of the method in \cite{TZpre15}. Compared to the paper \cite{TZpre15}, our work has the following major difficulty: the spectrum of the linearized operator around the profile outside the \textit{blowup region} can never be made negative. This requires new ideas as far as the control of the outer component is concerned, and this is one of the main novelties of our approach. Note that the linearized operator in the power case (equation \eqref{equ:sheGra}) behaves as one with fully negative spectrum, which greatly simplifies the analysis in the outer region.
\end{remark}

\begin{remark} \label{rem:Cauchypro} The initial data for which equation \eqref{equ:problem} has a solution blowing up in finite time $T$ at the origin and verifying the behavior \eqref{equ:behTh1} and \eqref{equ:finalprofileUlpha} is given by  formulation \eqref{def:U0d0d1} below. Note that the initial datum $U_0(x)$ defined in \eqref{def:U0d0d1} does not belong to $\in L^\infty(\RN)$. However, the local in time Cauchy problem for equation \eqref{equ:problem} is well-posed in $\mathcal{H}_{a}$, where $\mathcal{H}_{a}$ is defined by \eqref{def:Ha1a2}. 
Indeed, if we define $\tilde{U}(x,t)$ by $U(x,t) = \tilde{U}(x,t) + \psi(x)$, equation \eqref{equ:problem} is equivalent to 
\begin{equation}\label{equ:tilU}
\partial_t \tilde{U} = \Delta \tilde{U} +  2\alpha \nabla \psi \cdot \nabla \tilde{U} + \alpha |\nabla \tilde{U}|^2 + e^{\psi}e^{\tilde{U}} + \alpha|\nabla\psi|^2 + \Delta \psi.
\end{equation}
Since $\|e^\psi\|_{L^\infty(\RN)}$, $\|\nabla \psi\|_{L^\infty(\RN)}$ and $\|\Delta \psi\|_{L^\infty(\RN)}$ are bounded, we see by classical arguments that equation \eqref{equ:tilU} is well-posed in $W^{1, \infty}(\RN)$. Thus, there exists a unique maximum solution on $[0,T)$ for equation \eqref{equ:problem} with $T \leq +\infty$ corresponding to the initial datum  $U_0$ given by \eqref{def:U0d0d1}.
\end{remark}

\begin{remark} The derivation of the profile $\Phi_\alpha$ can be understood through a formal analysis given in Section \ref{sec:formalapproach} below. We impose the condition $\alpha > -1$ in order to have a bounded profile. 
\end{remark}

\begin{remark} From part $(iii)$ of Theorem \ref{theo:1}, we see that the point $x = 0$ is an isolated blowup point. In order to prove this result, we need to establish a new \textit{"no blowup under some threshold"} of a parabolic inequality with a nonlinear gradient term, see Proposition \ref{prop:noblowup} below.
\end{remark}

\begin{remark} The results stated in Theorem \ref{theo:1} also hold for more general equations than \eqref{equ:problem}, that is 
$$\partial_t U = \Delta U + \alpha|\nabla U|^2 + e^U + F(U,\nabla U),$$
where $F: \Rb \times \RN \to \Rb$ is Lipschitz and satisfies
$$|F(u, v)| \leq C(1 + e^{r_1u} + |v|^{r_2}), \quad r_1 \in [0, 1), \;\; r_2 \in [0, 2).$$
Indeed, the nonlinear term $F(U, \nabla U)$ can be considered as subcritical (in size) in comparison with the main nonlinear terms in the equation, therefore, a simple perturbation of our proof can be extended to this case without difficulties.
\end{remark}

\begin{remark} We see from part $(i)$ of Theorem \ref{theo:1} that the constructed non self-similar solution has the profile depending on the variable 
$$z = \frac{x}{\sqrt{(T-t)|\ln(T-t)|}},$$
which is the same for the case of equations \eqref{equ:problem} and \eqref{equ:sheGra} without the nonlinear gradient term ($\alpha = 0$). Unlike the work done by Tayachi and Zaag in \cite{TZpre15}, where the authors constructed for equation \eqref{equ:sheGra} with $\alpha > 0$ and $ p > 3$ a stable blowup solution having a blowup profile depending on the variable \eqref{def:varTZ} and satisfying 
$$U(x,t) \sim \left(p-1 + b|z|^2\right)^{-\frac{1}{p-1}},$$
where $b = b(\alpha, p, N)$ is a positive constant which goes to $+\infty$ as $\alpha \to 0$.  Thus, their result does not recover the result obtained in \cite{BKnon94} and \cite{MZdm97} for the case $\alpha = 0$ where $\mu = \frac{1}{2}$ and $b > 0$; on the contrary, our result obtained for equation \eqref{equ:problem} recovers the case $\alpha = 0$.
\end{remark}

\begin{remark} By the space translation invariance of equation \eqref{equ:problem}, the result of Theorem \ref{theo:1} can be stated for any arbitrary considered point $x_0 \in \RN$ as a blowup point of the solution, in particular, we have for all $t \in [0, T)$,
$$\left\|(T-t)e^{U(x, t)} + \left(1 + \frac{|x - x_0|^2}{(4 + 4\alpha) (T-t)|\ln (T-t)|}\right)^{-1}\right\|_{W^{1,\infty}(\RN)} \leq \frac{C}{\sqrt{|\ln (T-t)|}},$$
and the final blowup profile satisfies
$$U_\alpha^*(x) \sim \ln \left(\frac{(8 + 8\alpha)|\ln |x - x_0||}{|x - x_0|^2} \right)\quad \text{as}\;\; |x - x_0| \to 0.$$ 
\end{remark}

\begin{remark} We conjecture that the identity \eqref{equ:finalprofileUlpha} also holds after differentiating, namely that 
$$|\nabla U^*_\alpha(x)| \sim \frac{c^*}{|x|}, \quad \text{as} \quad |x| \to 0,$$
where $c^*$ is some positive constant. We strongly believe that with some better refined estimate of \eqref{equ:behTh1}, we could obtain such a final profile for the gradient solution. Unfortunately, we are only able to derive in this work a weaker result given in \eqref{estNalUalpha}.
\end{remark}

\begin{remark}  In comparison with the work in \cite{GVjde96}, where the authors deal with the study of the behavior of the profile near blowup of the following quasilinear parabolic equation 
\begin{equation}\label{equ:equGV}
\partial_t U = \Delta U  + |\nabla U|^2 + U^\beta, \quad \beta > 1,
\end{equation}
our problem can be considered as the limit case of equation \eqref{equ:equGV} as $\beta \to +\infty$, and our result obtained $(ii)$ of Theorem \ref{theo:1} is quite different from that of \cite{GVjde96}. More precisely, they showed that in the case of the single point blowup in the equation \eqref{equ:equGV} and under the radially symmetric, i.e $U(x,t) = U(r, t)$ with $r=|x|$, and nonincreasing assumption (see Theorem 8, page 36 in \cite{GVjde96}), the profile near blowup is given by 
$$\forall \beta > 2, \quad  U(r,T) \sim C_*r^{-\frac{2}{\beta - 2}} \quad \text{as}\;\; r \to 0,$$
where $C_* = C_*(\beta)$ is a positive appreciated  constant.
\end{remark}

As in \cite{MZdm97} and \cite{TZpre15} (see also \cite{ZAAaihp02}, \cite{MZjfa08}), it is possible to make the interpretation of the finite-dimensional variable in terms of the blowup time and the blowup point. This allows us to derive the stability of the profile $\Phi_\alpha$ in Theorem \ref{theo:1} with respect to perturbations of the initial data. More precisely, we have the following:
\begin{theorem}[\textbf{Stability of the blowup profile \eqref{equ:behTh1}}] \label{theo:2}
Let us denote by $\hat{U}(x,t)$ the solution constructed in Theorem \ref{theo:1} and by $\hat{T}$ its blowup time. Then, there exists a neighborhood $\mathcal{V}_0$ of $\hat{U}(x,0)$ in $\mathcal{H}_{a}$ defined in \eqref{def:Ha1a2} such that for any $U_0 \in \mathcal{V}_0$, equation \eqref{equ:problem} has a unique solution $U(x, t)$ with initial data $U_0$, and $U(x,t)$ blows up in finite time $T(U_0)$ at point $a(U_0)$. Moreover, estimate \eqref{equ:behTh1} is satisfied by $U(x-a,t)$ and
$$T(U_0) \to \hat{T}, \quad a(U_0) \to 0 \quad \text{as $U_0 \to \hat{U}_0$ in $\mathcal{H}_{a}$}.$$
\end{theorem}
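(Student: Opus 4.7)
Following the Merle--Zaag strategy used for Theorem \ref{theo:1}, I would deduce the stability by reinterpreting the finite-dimensional parameters $(d_0, d_1) \in \Rb \times \RN$ that appear in the construction as geometric parameters, namely the blowup time and the blowup point. Recall that $\hat U$ arises from an initial datum of the form \eqref{def:U0d0d1} with some parameters $(\hat T, \hat d_0, \hat d_1)$, and that the proof of Theorem \ref{theo:1} reduces the global analysis to the control of $N+1$ unstable modes of the linearized operator around the profile $\Phi_\alpha$, corresponding to the eigenvalues $\lambda_0 = 1$ and $\lambda_1 = 1/2$.

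The first step is to enlarge the parameter family to $2N+2$ parameters by setting $\Psi_{T,a,d_0,d_1}(x) := \Psi_{T,d_0,d_1}(x - a)$, where $\Psi_{T,d_0,d_1}$ denotes the initial datum of \eqref{def:U0d0d1}. By the scaling invariance \eqref{equ:invp1}, varying $T$ around $\hat T$ shifts the mode associated to $\lambda_0 = 1$; by the translation invariance of equation \eqref{equ:problem}, varying $a$ around $0$ shifts the $N$ modes associated to $\lambda_1 = 1/2$. Hence the full map $(T, a, d_0, d_1) \mapsto \Psi_{T,a,d_0,d_1}$ parametrizes, in an invertible way at first order, the unstable directions of the similarity-variable dynamics.

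Given $U_0 \in \mathcal{V}_0$ close to $\hat U_0$ in $\mathcal{H}_a$, I would then solve an implicit-function problem to produce $(T(U_0), a(U_0), d_0(U_0), d_1(U_0))$ near $(\hat T, 0, \hat d_0, \hat d_1)$ such that $U_0 - \Psi_{T(U_0),a(U_0),d_0(U_0),d_1(U_0)}$ is admissible in the trapping scheme of Theorem \ref{theo:1}. By the geometric interpretation above, the Jacobian of the projection of $\Psi_{T,a,d_0,d_1}$ onto the unstable modes is non-degenerate at $(\hat T, 0, \hat d_0, \hat d_1)$, so an inverse-function (or topological-degree, as in \cite{MZdm97}) argument yields the four parameters as continuous functions of $U_0$. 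The trapping argument of Theorem \ref{theo:1} then applies verbatim, so the associated similarity-variable solution stays in the shrinking set forever; consequently $U$ blows up at time $T(U_0)$ at the point $a(U_0)$, satisfies the profile estimate \eqref{equ:behTh1} after translating by $a(U_0)$, and the convergences $T(U_0) \to \hat T$, $a(U_0) \to 0$ follow from the continuity of the modulation map.

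The main obstacle I anticipate is to justify this geometric interpretation rigorously in the present critical setting. In \cite{MZdm97} and \cite{TZpre15} the argument relies on a favorable spectral decomposition on the full domain, whereas here the spectrum of the linearized operator around $\Phi_\alpha$ cannot be made negative in the outer region, as stressed in the first remark after Theorem \ref{theo:1}. One must therefore track the dependence of the outer component of the constructed solution on $(T, a)$ using the new outer-control techniques developed for Theorem \ref{theo:1}, and verify that this dependence is subleading so as not to spoil the invertibility of the finite-dimensional linearization. A further mild difficulty is that the affine space $\mathcal{H}_a$ based on $\psi(x) = -\ln(1 + a|x|^2)$ is not invariant under spatial translations, so the substitution $U_0 \mapsto U_0(\cdot - a)$ used to adjust the blowup point requires careful handling in this ambient space.
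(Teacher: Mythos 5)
Your plan coincides with what the paper intends: the authors do not prove Theorem \ref{theo:2} here but state (see the remark following it) that the stability follows from the finite-dimensional reduction exactly as in \cite{MZdm97} (Theorem 2, Section 4) and \cite{TZpre15} (Theorem 9, Section 6), which is precisely the Merle--Zaag $(T,a)$-modulation argument you outline. One small correction to your final concern: although $\psi(x) = -\ln(1+a|x|^2)$ itself is not translation invariant, the difference $\psi(\cdot - a_0) - \psi$ belongs to $W^{1,\infty}(\RN)$ for any fixed $a_0$, so the affine space $\mathcal{H}_a$ is in fact preserved under spatial translations and this does not create an extra obstacle.
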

\begin{remark}
As in \cite{Brejde92}, we conjecture that the blowup pattern just described in part $(i)$ of Theorem \ref{theo:1} is generic, i.e there exists an open, everywhere dense set $\mathcal{V}_0$ of initial data whose corresponding solutions either converge to a steady state, or blowup in finite time at a single point, according to the estimate \eqref{equ:behTh1}. Up to our knowledge, the only proof for the genericity is given by Herrero and Vel\'azquez \cite{HVasnsp92} for equation \eqref{equ:sheGra} without the nonlinear gradient term ($\alpha = 0$) in the one dimensional case.
\end{remark}

\begin{remark}  We will not give the proof of Theorem \ref{theo:2} because the stability result follows from the reduction to a finite dimensional case as in \cite{MZdm97} (see Theorem 2 and its proof in Section 4) and \cite{TZpre15} (see Theorem 9 and its proof in Section 6) with the same argument. Hence, we only prove the exsitence result (Theorem \ref{theo:1}) and kindly refer the reader to \cite{MZdm97} and \cite{TZpre15} for the proof of the stability.
\end{remark}

Let us now give the main ideas of the proof of Theorem \ref{theo:1}. Note that our proof is quite different from that of \cite{Brejde92} (see also \cite{Breiumj90}) treated for equation \eqref{eq:expU}. Here, we follow the method developed by Bricmont and Kupiainen \cite{BKnon94}, and modified by Merle and Zaag \cite{MZdm97} for the construction of a stable blowup solution to equation \eqref{eq:Up}. Note that the method of \cite{MZdm97} has been proved to be successful for various situations including parabolic and hyperbolic equations. For the parabolic equations, we would like to mention the works by Masmoudi and Zaag \cite{MZjfa08} (see also the earlier work by Zaag \cite{ZAAihn98}) for the complex Ginzburg-Landau equation with no gradient structure, by Nguyen and Zaag \cite{NZsns16}, \cite{NZens16} for a logarithmically perturbed nonlinear heat equation and for a refined blowup profile for equation \eqref{eq:Up}, or by Nouaili and Zaag \cite{NZcpde15} for a non-variational complex-valued semilinear heat equation, by Ghoul, Nguyen and Zaag \cite{GNZpre16c} for a non-variational parabolic system.  There are also the cases for the construction of multi-solitons for the semilinear wave equation in one space dimension by C\^ote and Zaag \cite{CZcpam13}. 

Our goal is to construct for equation \eqref{equ:problem} a solution $U(x, t)$ which blows up in finite time $T$ and verifies the behaviors \eqref{equ:behTh1}. The proof is performed in the framework of the similarity variables defined in \eqref{def:changeVar}. By the space translation invariance of equation \eqref{equ:problem}, we may assume $a = 0$ in \eqref{def:changeVar} and write $W = W_a$ for simplicity of  the notation. We recall from \eqref{eq:perSHE} that when $r = 2$, $W$ solves the equation
\begin{equation}\label{equ:W}
\partial_sW = \Delta W - \frac{y}{2}\cdot \nabla W + \alpha|\nabla W|^2 + e^W - 1.
\end{equation}
If we introduce 
$$Z = e^W,$$
then $Z$ solves
\begin{equation}\label{equ:Zys}
\partial_s Z = \Delta Z - \frac{y}{2}\cdot \nabla Z + (\alpha - 1)\frac{|\nabla Z|^2}{Z} + Z^2 - Z.
\end{equation}
Constructing a solution for \eqref{equ:problem} satisfying \eqref{equ:behTh1} reduces to the construction of a solution for \eqref{equ:Zys} such that 
\begin{equation}\label{def:Qys}
\begin{array}{ll}
Q(y, s) &= e^{W(y,s)} - e^{\Phi_\alpha\left(\frac{y}{\sqrt{s}}\right)} \equiv Z(y,s) - \Gamma_\alpha(y,s)   \to 0,
\end{array}\quad \text{as}\; s \to +\infty,
\end{equation}
where $Q$ satisfies the equation 
\begin{equation}\label{eq:Q_i}
\partial_s Q = \big(\Ls + V(y,s)\big)Q + Q^2 + (\alpha - 1)\frac{|\nabla Q + \nabla \Gamma_\alpha|^2}{Q + \Gamma_\alpha} + R(y,s), 
\end{equation}
and 
$$\Ls = \Delta - \frac{y}{2}\cdot \nabla + 1, \quad V = 2(\Gamma_\alpha - 1), \quad \|R(y,s)\|_{L^\infty} \leq \frac{C}{s}.$$
Satisfying such a property is guaranteed by a condition that $Q(s)$ belongs to some set $\mathcal{V}_{A,K_0}(s)$ which shrinks to $0$ as $s \to +\infty$ (see item $(i)$ in Definition \ref{def:St} below for an example). Let us insist on the fact that we do not work with equation \eqref{equ:W}, but with equation \eqref{equ:Zys} (see explaination below). Since the linearization of equation \eqref{equ:Zys} around the profile $e^{\Phi_\alpha} \equiv \Gamma_\alpha$ gives $N+1$ positive modes, $\frac{N(N+1)}{2}$ zero modes, and an infinite dimensional negative part, we can use the method of \cite{BKnon94} and \cite{MZdm97} which relies on two arguments:
\begin{itemize}
\item[-] The use of the bounding effect of the heat kernel to reduce the problem of the control of $Q(s)$ in $\mathcal{V}_{A,K_0}(s)$ to the control of its positive modes (in fact, we control a modified version of $Q$ in $\mathcal{V}_{A,K_0}$; see Proposition \ref{prop:redu}). Let us insist on the fact that if we linearize equation \eqref{equ:W} around $\Phi_\alpha$, say that $q = W - \Phi_\alpha$, we could not able to control $q$ outside the \textit{blowup region} because in this region the linear part of the equation satisfied by $q$ does not have a fully negative spectrum. In the contrary, the spectrum of the linear operator of the equation satisfied by $Q$ is negative outside the \textit{blowup region}, which makes the control of $Q$ in that region easily.\\

\item[-] The control of the $(N+1)$ positive modes thanks to a topological argument based on index theory (see the arguments at page \pageref{step2:topoarg}). 
\end{itemize}
(Note that the topological argument is also used in \cite{Brejde92} to solve the finite dimensional problem, however, the approach of the reduction of the problem to a finite dimensional one in our proof is totally different from that in \cite{Brejde92}).\\

Since the gradient term in the equation \eqref{equ:problem} in the critical case contributes to the change of the blowup profile and given the fact that the blowup profile $\Phi_\alpha$ is different from the one considered in \cite{BKnon94} and \cite{MZdm97}, our proof truly requires crucial modifications of the methods of \cite{BKnon94} and \cite{MZdm97} and special arguments in order to handle the nonlinear gradient term. These modifications lay in the following places:
\begin{itemize}
\item[(i)] We no longer work with the linearization of equation \eqref{equ:W} around $\Phi_\alpha$, but instead the equation \eqref{eq:Q_i} satisfied by $Q(y,s)$.  Note that linearizing \eqref{equ:W} around the profile $\Phi_\alpha$ generates the linear operator  $\Ls + \tilde V$ which behaves similarly as in the power case (equation \eqref{equ:sheGra}) inside the \textit{blowup region}, but there is a quite difference outside the \textit{blowup region}. Indeed, Bricmont and Kupiainen's approach for equation \eqref{equ:sheGra} gives that the spectrum of $\Ls + \tilde V$ in the outer region is controlled by $-\frac{1}{p-1} + \epsilon$ which can be made negative by taking $\epsilon$ small enough. Whereas, the same estimate for equation \eqref{equ:problem} gives that the spectrum of  $\Ls + \tilde V$ is controlled by $0 + \epsilon$, which can never be made negative. In order to overcome this difficulty, we no longer work with the linearization of equation \eqref{equ:W} around $\Phi_\alpha$, but instead the equation \eqref{eq:Q_i} satisfied by $Q$. In fact, the spectrum  of the linear part of equation \eqref{eq:Q_i} is fully negative in the outer region, which make it easy to control. However, this manner gives additionally a term of the form $\frac{|\nabla Q + \nabla \Gamma_\alpha|^2}{Q + \Gamma_\alpha}$ (see \eqref{eq:Q_i}) which is needed new ideas to achieve the control.
\item[(ii)] Defining the shrinking set $\mathcal{S}^*$ (see Definition \ref{def:St} below) to trap the solution. Note that our definition of $\mathcal{S}^*$ is defferent from the one in \cite{MZdm97} designed for the standard nonlinear heat equation \eqref{equ:sheGra} with $\alpha = 0$. Note also that equation \eqref{eq:Q_i} is almost the same as in \cite{MZdm97}, except for the nonlinear gradient term which causes serious difficulties in the analysis. In \cite{MZdm97}, the authors introduced estimates of $Q$ in the \textit{blowup region} $|y| \leq K_0\sqrt{s}$, and in the regular region $|y| \geq K_0\sqrt s$. However, the estimates in the region $|y| \geq K_0\sqrt{s}$ imply smallness of $Q$ only, and do not allow any control of the nonlinear gradient term in this region. In other words, the analysis based on the method of \cite{MZdm97}, that is to estimate the solution in the $z = \frac{y}{\sqrt{s}}$ variable is not sufficient and must be improved. In particular, we introduce estimates of $Q$ in three regions in a different variable scale, which follows the approach of \cite{MZnon97} using for a finite time quenching problem of vortex reconnection with the boundary. More precisely, in the \textit{blowup region}, i.e. $|x| \leq K_0\sqrt{(T-t)|\ln (T-t)|}$, we do an asymptotic analysis around the profile $\Gamma_\alpha$ through equation \eqref{eq:Q_i}. In the intermediate region, i.e. $\frac{K_0}{4}\sqrt{(T-t)|\ln(T-t)|} \leq |x| \leq \epsilon_0$, we control the solution by using classical parabolic estimates through introducing a rescaled function of $U$ (see \eqref{def:Uc} below). In the regular region, i.e. $|x| \geq \frac{\epsilon_0}{4}$, we estimate directly $U$. The estimates in the intermediate and regular regions allow us to control the nonlinear gradient term appearing in equation \eqref{eq:Q_i}. 
\end{itemize}

The derivation of the final profile $U_\alpha^*(x)$ stated in part $(ii)$ of Theorem \ref{theo:1} uses the method of \cite{ZAAihn98} and \cite{Mercpam92}. In particular, we need to establish a new \textit{no blowup under some threshold} criterion for a parabolic inequality with a nonlinear gradient term, whose proof follows ideas given in \cite{GKcpam89} (see Proposition \ref{prop:noblowup}).\\

\noindent The organization of the rest of this paper is as follows:\\
- In Subsection \ref{sec:formalapproach}, we first explain formally how we obtain the profile $\Phi_\alpha$ and give a suggestion for an appreciated profile to be linearized around. In Subsection \ref{sec:transproble}, we give a formulation of the problem in order to justify the formal argument.\\
- In Section \ref{sec:existence}, we give all the arguments of the proof of part $(i)$ of Theorem \ref{theo:1} assuming technical results, which are left to the next section. \\
- In Section \ref{sec:reductofn}, we give the proof of the technical results used in the existence's proof, that is the proof of Proposition \ref{prop:redu}. We divide its proof in two subsections. In Subsection \ref{sec:apri}, we derive \textit{a priori estimates} of $U(t)$ in the set $\Sc^*(t)$. In Subsection \ref{sec:con}, we show that all the estimates given in Definition \ref{def:St} of $\Sc^*$ can be improved, except for the positive modes, which concludes the proof of the reduction to a finite dimensional one.\\
- In Appendix \ref{sec:preinti}, we give properties of the shrinking set $\Sc^*(t)$ to trap the solution as well as properties of the initial data corresponding to the blowup solution described in Theorem \ref{theo:1}. In Appendix \ref{sec:noblowup}, we give the proof of a \textit{no blowup under some threshold} criterion for a parabolic inequality with a nonlinear gradient term, which is an ingredient in the proof of the existence of the final blowup profile.

\section{Formulation of the problem.}
\subsection{A formal analysis.}\label{sec:formalapproach}
In this subsection, we use matching asymptotics to formally derive the blowup behavior of the solution to \eqref{equ:problem}. More precisely, we will explain how to deduce the following behavior
\begin{equation}\label{equ:behU}
U(x,t) + \ln(T-t) \sim \Phi_\alpha\left(\frac{x}{\sqrt{(T-t)|\ln(T-t)|}}\right) \quad \text{as} \;\; t\to T,
\end{equation}
where $\Phi_\alpha$ is defined by \eqref{def:Phi_alpha}. 

In the similarity variables setting \eqref{def:changeVar}, justifying \eqref{equ:behU} is equivalent to showing that 
$$W(y,s) \sim \Phi_\alpha\left(\frac{y}{\sqrt{s}}\right)\quad \text{as} \;\; s \to +\infty,$$
where $W$ satisfies \eqref{equ:W}. Let us rewrite the equation on $W$ as follows:
\begin{equation}\label{equ:WL}
\partial_sW = \Ls W + \alpha|\nabla W|^2 + Q(W),
\end{equation}
where 
\begin{equation}\label{def:opL}
\Ls = \Delta - \frac{y}{2}\cdot \nabla + 1,
\end{equation}
and 
\begin{equation*}
Q(W) = e^W - 1 - W.
\end{equation*}
Note that we have 
$$\big|Q(W) - \frac{1}{2}W^2 \big| \leq C|W|^3 \quad \text{as}\;\; W \to 0.$$
The linear operator $\Ls$ is self-adjoint in $L^2_\rho(\RN)$, where $L^2_\rho(\RN)$ is the weighted $L^2$ space associated with the weight $\rho$ defined by 
$$\rho(y) = \frac{1}{(4\pi)^{N/2}}e^{-\frac{|y|^2}{4}}.$$
It can be shown that the spectrum of $\Ls$ is explicitly given by (see \cite{FKcpam92} for instance)
$$\text{spec}(\Ls) = \left\{1 - \frac{n}{2},n \in \mathbb{N}\right\}.$$
For $N = 1$, all the eigenvalues are simple and the eigenfunctions are dilations of Hermite polynomials: the eigenvalue $1 - \frac{n}{2}$ corresponds to the following eigenfunction:
\begin{equation}
h_n(y) = \sum_{i = 0}^{\big[n/2\big]} \frac{n!}{i!(n - 2i)!}(-1)^iy^{n - 2i}.
\end{equation}
The first three eigenfunctions are 
$$h_0(y) = 1, \quad h_1(y) = y, \quad h_2(y) = y^2 - 2.$$
Notice that $h_n$ satisfies
\begin{equation}\label{equ:inthnhm}
\int_{\Rb} h_n(\xi) h_m(\xi) \rho(\xi) d\xi = 2^nn!\delta_{n,m}, \quad \Ls h_n = \big(1 - \frac{n}{2}\big)h_n.
\end{equation}
We also introduce $k_n(y) = \|h_n\|^{-2}_{L^2_\rho}h_n(y)$. In higher dimensions, the eigenfuntions are formed by taking products of the polynomials $\{h_k\}_{k = 0}^\infty$, namely that for $n = (n_1, \cdots, n_N) \in \mathbb{N}^N$, the eigenfunction corresponding to $1 - \frac{|n|}{2}$ ($|n| = n_1 + \cdots + n_N$) is 
\begin{equation}\label{def:hn1nN}
h_n(y) = h_{n_1}(y_1)\cdots h_{n_N}(y_N).
\end{equation}

Since the eigenfunctions of $\Ls$ span the whole space $L^2_\rho(\RN)$, we can expand the solution $W$ as follows: 
$$W(y,s) = \sum_{m \in \mathbb{N}^N}^\infty W_m(s)h_m(y).$$
For simplicity, let us assume that the solution is radially symmetric in $y$. Since $h_m$ with $m \geq  3$ correspond to negative eigenvalues of $\Ls$, we may consider that 
\begin{equation}\label{equ:expandW_02}
W(y,s) = W_0(s) + W_2(s)(|y|^2 - 2N),
\end{equation}
with $W_0(s), W_2(s) \to 0$ as $s \to +\infty$ (for simplicity in the notation, we write $W_2(s)$ instead of $W_{(2, 0, \cdots, 0)}(s)$).

Writing $\alpha|\nabla W|^2 = 4\alpha (h_2 + 2N h_0)W_2^2$, multiplying equation \eqref{equ:WL} by $k_0\rho$ and $k_2\rho$ respectively and integrating over $\RN$, we derive the following ODE system for $W_0$ and $W_2$:
\begin{align}
W_0' &= W_0 + \frac{1}{2}W_0^2 + N(4 + 8\alpha)W_2^2 + \mathcal{O}(|W_0|^3 + |W_2|^3)\label{equ:W_0}\\
W_2' &= 0 + W_0W_2 + (4 + 4\alpha)W_2^2+ \mathcal{O}(|W_0|^3 + |W_2|^3).\label{equ:W_2}
\end{align}
Assuming that 
\begin{equation}\label{equ:assW0W2}
|W_0(s)|  = o(|W_2(s)|)\quad \text{as} \quad s \to +\infty,
\end{equation}
we then rewrite equation \eqref{equ:W_2} as follows:
$$W_2' = (4 + 4\alpha)W_2^2 + o(|W_2|^2) \quad \text{as} \quad s \to +\infty,$$
which yields
$$W_2(s) = -\frac{1}{(4 + 4\alpha)s} + o\left(\frac{1}{s}\right) \quad \text{as} \quad s \to +\infty.$$
From equation \eqref{equ:W_0} and the above formula of $W_2$, we have $W_0' = W_0 + \mathcal{O}\left(\frac{1}{s^2}\right)$, which gives
$$|W_0(s)| = \mathcal{O}\left(\frac{1}{s^2}\right) \quad \text{as}\quad s \to +\infty.$$
From the above formulas of $W_0$ and $W_2$, we have by equation \eqref{equ:W_2},
$$W_2' = (4 + 4\alpha)W_2^2 + \mathcal{O}\left(\frac{1}{s^3}\right) \quad \text{as} \quad s \to +\infty,$$
hence, 
$$W_2(s) = -\frac{1}{(4 + 4\alpha)s} + \mathcal{O}\left(\frac{\ln s}{s^2}\right) \quad \text{as}\quad s \to +\infty.$$
Such $W_0$ and $W_2$ are compatible with the assumption \eqref{equ:assW0W2}. Therefore, we write by \eqref{equ:expandW_02},
\begin{equation}\label{equ:expandWinL2rho}
W(y,s) = -\frac{1}{4 + 4\alpha}\frac{y^2}{s} + \frac{N}{(2 + 2\alpha)s} + \mathcal{O}\left(\frac{\ln s}{s^2}\right) \quad \text{as}\quad s \to +\infty,
\end{equation}
in $L^2_\rho$, and also uniformly in compact sets by standard parabolic regularity.

Since \eqref{equ:expandWinL2rho} provides a relevant variable for blowup, namely $z = \frac{y}{\sqrt{s}}$, we then try to search formally solutions of \eqref{equ:W} of the form
\begin{equation}\label{equ:expantildeW}
\tilde{W}(y,s) = \sum_{j = 0}^\infty \frac{1}{s^j}w_j(z), \quad z = \frac{y}{\sqrt{s}},
\end{equation}
and compare elements of order $\frac{1}{s^j}$. For $j = 0$ and $j=1$, we find that
\begin{equation}\label{equ:w_0}
-\frac{z}{2}\cdot\nabla w_0(z) + e^{w_0(z)} - 1 = 0,
\end{equation}
\begin{equation}\label{equ:w_1}
F(z):= \frac{z}{2}\cdot \nabla w_1(z) - e^{w_0(z)}w_1(z) - \frac{z}{2}\cdot \nabla w_0(z) - \Delta w_0(z) - \alpha|\nabla w_0(z)|^2 = 0.
\end{equation}
Recalling that $\tilde{W} \to 0$ as $s\to +\infty$ on every compact set, we naturally impose the condition 
$$w_0(0) = 0.$$
Solving equation \eqref{equ:w_0} with this condition, we then obtain by radial symmetry
$$w_0(z) = -\ln(1 + c_0|z|^2),$$
for an integration constant $c_0$. Because we want a bounded solution, then it requires $c_0 > 0$. From the Taylor expansion for $|y|$ bounded, we write
$$\tilde{W}(y,s) \sim -\ln \left(1 + c_0\frac{|y|^2}{s}\right) \sim -c_0\frac{|y|^2}{s} - \frac{c_0^2}{2}\frac{|y|^4}{s^2} + \cdots,$$
from which and \eqref{equ:expandWinL2rho}, we find that the coefficient $c_0=\frac{1}{4 + 4\alpha}$. Thus,  
$$w_0(z) = -\ln\left(1 + \frac{|z|^2}{4 + 4\alpha}\right).$$
Substituting this formula into \eqref{equ:w_1} and evaluating $F$ at $z = 0$, we obtain
$$w_1(0) = 2Nc_0 = \frac{N}{2 + 2\alpha},$$
which matches with the expansion \eqref{equ:expandWinL2rho}.

In conclusion, the first term in the expansion \eqref{equ:expantildeW} of $\tilde{W}$ is precisely the profile function $\Phi_\alpha$ as expected in \eqref{equ:behU}.

\subsection{Transformation of the problem.}\label{sec:transproble}
In this subsection, we set up the problem of constructing a solution $U$ for equation  \eqref{equ:problem} which blows up in finite time $T$ only at the origin and satisfies \eqref{equ:behTh1}. In particular, we want to prove for suitable initial data $U_0$ of \eqref{equ:problem} that 
\begin{equation}\label{equ:beU1}
\lim_{t \to T}\left\|(T-t)e^{U(z\sqrt{(T-t)|\ln(T-t)|},t)} - e^{\Phi_\alpha(z)} \right\|_{W^{1, \infty}(\RN)} = 0,
\end{equation}
where $\Phi_\alpha$ is defined by \eqref{def:Phi_alpha}. Or, in the self-similar setting \eqref{def:changeVar}, our aim is to construct initial data $W(s_0)$ such that the equation \eqref{equ:W} has a solution $W(y,s)$ defined for all $(y,s) \in \RN \times [s_0, +\infty)$ and satisfies
\begin{equation}\label{equ:beW2}
\lim_{s \to +\infty} \left\|e^{W(y,s)} - e^{\Phi_\alpha\left(\frac{y}{\sqrt{s}}\right)}\right\|_{W^{1,\infty}(\RN)} = 0.
\end{equation}
In the previous subsection, we formally obtain for $W(y,s)$ an expansion of the form $\sum_{j = 0}^{+\infty}\frac{1}{s^j}w_j(\frac{y}{\sqrt{s}})$ with $w_0 = \Phi_\alpha$ and $w_1$ satisfying \eqref{equ:w_1}. Hence, we will not linearize $W$ around $\Phi_\alpha$, but we will study the difference $W(y,s) - w_0(\frac{y}{\sqrt{s}}) - \frac{1}{s}w_1(\frac{y}{\sqrt{s}})$. Since the expression of $w_1$ is too complicated, we study instead $W(y,s) - w_0(\frac{y}{\sqrt{s}}) - \frac{1}{s}w_1(0)$. Let us introduce 
\begin{equation}\label{def:Qpsi}
Q(y,s) = e^{W(y,s)} - \psi_\alpha(y,s) \equiv Z(y,s) - \psi_\alpha(y,s) \;\; \text{where}\;\; \psi_\alpha(y,s) = e^{\frac{N}{(2 + 2\alpha)s}}e^{\Phi_\alpha\left(\frac{y}{\sqrt{s}}\right)},
\end{equation}
where $\Phi_\alpha$ is defined in \eqref{def:Phi_alpha}. Then, we see from \eqref{equ:Zys} that $Q$ satisfies
\begin{equation}\label{equ:Q}
\partial_s Q = \big(\Ls + V(y,s)\big)Q + Q^2 + G(Q) + R(y,s), 
\end{equation}
where $\Ls$ is defined by \eqref{def:opL} and
\begin{align}
V(y,s) &= 2(\psi_\alpha(y,s) - 1),\label{def:Vys}\\ 
G(Q)& = (\alpha - 1)\left[\frac{|\nabla Q + \nabla \psi_\alpha|^2}{Q + \psi_\alpha} - \frac{|\nabla \psi_\alpha|^2}{\psi_\alpha} \right],\label{def:GQ}\\
R(y,s) &= -\partial_s \psi_\alpha + \Delta \psi_\alpha - \frac{y}{2}\cdot\nabla \psi_\alpha
+\psi_\alpha^2 - \psi_\alpha + (\alpha - 1)\frac{|\nabla \psi_\alpha|^2}{\psi_\alpha}. \label{def:Rys}
\end{align}
(Note that the equation satisfied by $Q$ is almost the same as in \cite{MZdm97}, except the term $G(Q)$).
 
Satisfying  \eqref{equ:beW2} reduces to the construction of initial data $Q(y, s_0)$ such that the equation \eqref{equ:Q} has a solution $Q(y,s)$ defined for all $(y,s) \in \RN\times [s_0, +\infty)$ such that 
\begin{equation}\label{con:Qu}
\lim_{s \to +\infty} \|Q(s)\|_{W^{1,\infty}(\RN)} = 0.
\end{equation}

Our analysis uses the Duhamel formulation of equation \eqref{equ:Q}: for each $s \geq  \sigma \geq  s_0$, we have
\begin{equation}\label{for:qint}
Q(s) = \mathcal{K}(s,\sigma)Q(\sigma) + \int_\sigma^s \mathcal{K}(s, \tau)\left[Q^2(\tau) + G(Q(\tau)) + R(\tau)\right] d\tau,
\end{equation}
where $\mathcal{K}$ is the fundamental solution of the linear operator $\Ls + V$ defined for each $\sigma > 0$ and $s \geq  \sigma$ by
\begin{equation} \label{def:kernel}
\partial_s\mathcal{K}(s,\sigma) = (\Ls + V)\mathcal{K}(s,\sigma), \quad \mathcal{K}(\sigma, \sigma) = \text{Identity}.
\end{equation}
Since we want to construct for \eqref{equ:Q} a solution $Q$ satisfying  \eqref{con:Qu} and the fact that 
$\|R(s)\|_{L^\infty} \leq \frac{C}{s}$ for $s$ large, it is then reasonable to think that the dynamics of equation \eqref{equ:Q} are influenced by the linear part, namely $\Ls + V$. The properties of the  self-adjoint operator $\Ls$ are given in the previous subsection. In particular, $\Ls$ is predominant on all the modes, except on the null modes where the terms $VQ$ and $G(Q)$ will play a crucial role. As for potential $V$, it has two fundamental properties which will strongly influence our strategy:
\begin{itemize}
\item[(i)] $\|V(s)\|_{L^2_\rho(\RN)} \to 0$ as $s \to +\infty$. In practice, the effect of $V$ in the \textit{blowup region} $\{|y|\leq K_0\sqrt{s}\}$ is regarded as a perturbation of the effect of $\Ls$ (except for the null mode).
\item[(ii)] outside of the \textit{blowup region}, we have the following property: for all $\epsilon > 0$, there exist $K_\epsilon > 0$ and $s_\epsilon$ such that
$$|V(y,s) - (-1)| \leq \epsilon - 1, \quad \forall s \geq  s_\epsilon, \; \forall |y| \geq  K_\epsilon \sqrt{s},$$
which $1$ is the largest eigenvalue of the operator $\Ls$. Thus, the spectrum of the linear operator $\Ls + V$ is fully negative, hence, the control of $Q$ in $L^\infty$ ouside the \textit{blowup region} will be done without difficulties. Note that linearizing equation \eqref{equ:W} around $\Phi_\alpha$, say $q = W - \Phi_\alpha$, generates the linear operator $\Ls + \tilde{V}$ whose spectrum in the outer region is fully positive. This is the major reason why we do not work with $q$, but with $Q = Z - \psi_\alpha$. \\ 
\end{itemize}

Since the behavior of the potential $V(y,s)$ inside and outside of the \textit{blowup region} is different, let us decompose $Q$ as follows:
\begin{equation}\label{def:qbqe}
Q(y,s) = \chi(y,s) Q(y,s) + (1 - \chi(y,s))Q(y,s) \equiv Q_b(y,s) + Q_e(y,s),
\end{equation}
where
\begin{equation}\label{def:chi}
\chi(y,s)= \chi_0\left(\frac{|y|}{K_0\sqrt{s}}\right),
\end{equation}
and $\chi_0 \in \mathcal{C}_0^\infty([0,+\infty))$ with $\text{supp}(\chi_0) \subset [0,2]$ and $\chi_0 \equiv 1$ on $[0,1]$, and $K_0 > 0$ is to be fixed large enough. Note that $supp (Q_b(s)) \subset \mathbf{B}(0,2K_0\sqrt{s})$ and $supp (Q_e(s)) \subset \mathbb{R}^N\setminus\mathbf{B}(0,K_0\sqrt{s})$.

Since the eigenfunctions of $\Ls$ span the whole space $L^2_\rho(\mathbb{R}^N)$, let us write 
\begin{align}
Q_b(y,s) &= Q_0(s) + Q_1(s)\cdot y + y^TQ_2(s)y - 2tr(Q_2(s)) + Q_{-}(y,s) \nonumber \\
& = Q_0(s) + Q_1(s)\cdot y + Q_\bot(y,s),\label{exp:Qb}
\end{align}
where
$$Q_{0}(s) = P_0(Q_b)(y,s), \quad Q_1(s)\cdot y = P_1 (Q_b)(y,s),$$
$$Q_{-}(y,s) = P_-(Q_b)(y,s) = \sum_{m \geq  3} P_m(Q_b)(y,s),$$
$$Q_{\bot}(y,s) = P_\bot(Q_b)(y,s) = \sum_{m \geq  2} P_m(Q_b)(y,s),$$
and $P_m$ is the projector on the eigenspace corresponding to the eigenvalue $1 - \frac{m}{2}$ defined by
\begin{equation}\label{def:Projector}
P_m(Q_b)(y,s) = \sum_{\beta \in \mathbb{N}^N, |\beta| = m}  \frac{h_\beta(y)}{\|h_\beta\|^2_{L^2_\rho}}\int_{\mathbb{R}^N} h_\beta(y)Q_b(y,s)\rho(y)dy,
\end{equation}
where $h_\beta$ is defined by \eqref{def:hn1nN}, and $Q_{2}(s)$ is a symmetric $(N \times N)$ matrix defined by
\begin{equation}\label{def:vb2}
Q_{2}(s) = \int_{\mathbb{R}^N}Q_b(y,s)\mathcal{M}(y)\rho(y)dy,
\end{equation}
and 
\begin{equation}\label{def:M2}
\mathcal{M}(y) = \left\{\frac{1}{8}y_iy_j - \frac{1}{4}\delta_{ij} \right\}_{1\leq i,j \leq N}.
\end{equation}
According to \eqref{def:qbqe} and \eqref{exp:Qb}, we have 
\begin{equation}\label{decomqQ}
Q(y,s) = Q_0(s) + Q_1(s)\cdot y + y^TQ_2(s)y - 2tr(Q_2(s)) + Q_{-}(y,s) + Q_e(y,s),
\end{equation}
and 
\begin{equation}\label{decomQ2}
Q(y,s) = Q_0(s) + Q_1(s)\cdot y + Q_\bot(y,s) + Q_e(y,s).
\end{equation}
The reader should keep in mind that $Q_m,\, m = 0, 1, 2$, $Q_-$ and $Q_\bot$ are the coordinates of $Q_b$ and not those of $Q$.

\section{Proof of the existence without technical details.}\label{sec:existence}
This section is devoted to the proof of Theorem \ref{theo:1}, that is the existence of the  solution $U(t)$ of equation \eqref{equ:problem} satisfying 
\begin{equation}\label{equ:goalLim}
\lim_{t \to T} \left\|(T-t)e^{U(x,t)} - e^{\Phi_\alpha\left(\frac{x}{\sqrt{(T-t)|\ln(T-t)|}}\right)}\right\|_{W^{1,\infty}(\RN)} = 0,
\end{equation}
and 
\begin{equation}\label{eq:goal2}
 \lim_{t \to T}U(x,t) = U^*_\alpha(x),
\end{equation}
where $\Phi_\alpha$ is defined by \eqref{def:Phi_alpha} and $U^*_\alpha$ behaves as \eqref{equ:finalprofileUlpha} as $x \to 0$. According to the transformation \eqref{def:Qpsi}, we see that the proof of \eqref{equ:goalLim} is equivalent to proving the existence of the solution $Q(s)$ of equation \eqref{equ:Q} such that
\begin{equation}\label{eq:goalQ}
\lim_{s \to +\infty} \|Q(s)\|_{W^{1, \infty}(\RN)} = 0.
\end{equation}   
We shall give all the arguments of the proof for \eqref{eq:goalQ} and \eqref{eq:goal2} assuming technical results which are left to the following sections.\\

In order to prove \eqref{eq:goalQ}, we use ideas given in Merle and Zaag \cite{MZnon97} where the authors suggested a modification of the argument of \cite{MZdm97} for the standard nonlinear heat equation \eqref{equ:sheGra} with $\alpha = 0$. In particular, we shall control the solution in three different regions covering $\RN$, defined as follows: for $K_0 > 0$, $\epsilon_0 > 0$ and $t \in [0,T)$, we set 
\begin{align*}
\Dc_1(t) &= \left\{x\;  \Big\vert \; |x| \leq K_0 \sqrt{|\ln(T-t)|(T-t)} \right\}\\
&\quad \equiv \left\{x \;\big\vert \; |y| \leq K_0 \sqrt{s}\right\} \equiv \left\{ x \; \Big\vert \; |z| \leq K_0\right\},\\
\Dc_2(t) &= \left\{x \; \Big\vert \; \frac{K_0}{4} \sqrt{|\ln(T-t)|(T-t)} \leq |x| \leq \epsilon_0 \right\}\\
&\quad  \equiv \left\{x \; \Big\vert\; \frac{K_0}{4} \sqrt{s} \leq |y| \leq \epsilon_0e^{\frac{s}{2}}\right\} \equiv \left\{ x \; \Big \vert\; \frac{K_0}{4} \leq |z| \leq  \frac{\epsilon_0}{\sqrt s }e^{\frac s2} \right\},\\
\Dc_3(t) &= \left\{x\;  \Big\vert \; |x| \geq \frac{\epsilon_0}{4} \right\} \equiv \left\{x \;\big\vert \; |y| \geq \frac{\epsilon_0}{4}e^{\frac{s}{2}}\right\} \equiv \left\{ x \; \Big\vert \; |z| \geq \frac{\epsilon_0}{4 \sqrt s}e^{\frac{s}{2}} \right\}.
\end{align*}

- In $\Dc_1$, the \textit{blowup region} of $U$, we make the change of variables \eqref{def:Qpsi}, resulting in equation \eqref{equ:Q}, to do an asymptotic analysis around the profile $e^{\Phi_\alpha(y/\sqrt s)}$ according to the decomposition \eqref{decomqQ} and \eqref{decomQ2}.

- In the intermediate region $\Dc_2$, we control $U$ by using classical parabolic estimates on $\Uc$, a rescaled function of $U$ defined for $x \ne 0$ by 
\begin{equation}\label{def:Uc}
\mathcal{U}(x, \xi, \tau) = \ln \big(T - t(x)\big) + U\big(x + \xi\sqrt{T - t(x)}, t(x) + \tau (T - t(x))\big),
\end{equation}
where  $t(x)$ is uniquely defined for $|x|$ sufficiently small by 
\begin{equation}\label{def:tx}
|x| = \frac{K_0}{4}\sqrt{(T - t(x))|\ln (T - t(x))|} \equiv \frac{K_0}{4}\sqrt{\theta(x)|\ln \theta(x)|}, 
\end{equation}
with 
\begin{equation}\label{def:thetax}
\theta(x) = T - t(x).
\end{equation}
From \eqref{equ:problem}, we see that $\Uc$ satisfies  the same equation as $U$: for all $\xi \in \RN$ and $\tau \in \left[-\frac{t(x)}{T - t(x)},1 \right)$,
\begin{equation}\label{eq:Ucxt}
\partial_\tau \Uc = \Delta_\xi \Uc + \alpha|\nabla_\xi \Uc|^2 + e^\Uc.
\end{equation}
We will in fact prove that $U$ behaves for 
$$|\xi| \leq \alpha_0\sqrt{|\ln (T- t(x))|}\quad \text{and} \quad \tau \in \left[\frac{t_0  - t(x)}{T -t(x)},1\right)$$
for some $t_0 < T$ and $\alpha_0 > 0$, like the solution of 
\begin{equation}
\partial_\tau \hat \Uc = e^{\hat \Uc},
\end{equation}
subject to the initial data 
$$\hat \Uc(0) = \Phi_\alpha\left(\frac{K_0}{4}\right) = -\ln \left(1 + \frac{K_0^2/16}{4 + 4\alpha}\right),$$
namely that 
\begin{equation}\label{def:solUc}
\hat{\Uc}(\tau) = -\ln\left((1 - \tau) + \frac{K_0^2/16}{4 + 4\alpha}\right). 
\end{equation}
As we will see that the analysis in $\Dc_2$ will imply the conclusion of \eqref{eq:goal2}. 

- In $\Dc_3$, we estimate directly $U$ by using the local in time well-posedness of the Cauchy problem for equation \eqref{equ:problem}.\\

As described above, satisfying \eqref{eq:goalQ} and \eqref{eq:goal2} is guaranteed if we can show that 
\begin{equation}\label{equ:UinSt}
U(t) \in \mathcal{S}^*(t), \quad \forall t \in [t_0, T),
\end{equation}
where $\Sc^*(t)$ is precisely defined as follows:
\begin{definition}[\textbf{Definition of shrinking set to trap solutions}]\label{def:St} For all $t_0 < T$, $K_0 > 0$, $\epsilon_0 > 0$, $\alpha_0 > 0$, $A > 0$, $\delta_0 > 0$, $\eta_0 > 0$, $C_0 > 0$ and $C_0' > 0$, for all $t \in [t_0,T)$, we define $\Sc^*(t_0, K_0, \epsilon_0, \alpha_0, A, \delta_0, \eta_0, C_0, C_0',  t)$ being the set of all functions $U \in \mathcal{H}_{a}$ (see Definition \eqref{def:Ha1a2}) satisfying:

\noindent $(i)$ \textit{Estimates in $\Dc_1$:} $Q(s) \in \Vc_{A, K_0}(s)$ where $s = - \ln(T-t)$, $Q(s)$ is defined as in \eqref{def:Qpsi} and $\Vc_{A, K_0}(s)$ is the set of all functions $Q$ in $W^{1, \infty}(\RN)$ such that 
$$
|Q_0(s)| \leq \frac{A}{s^2},\quad |Q_{1,i}(s)|\leq \frac{A}{s^2},\quad |Q_{2,ij}(s)| \leq \frac{A^2 \ln s}{s^2},  \quad  \forall i,j \in \{1, \cdots, N\},$$
$$|Q_-(y,s)| \leq \frac{A}{s^2}(|y|^3 + 1),\quad |(\nabla_y Q)_\bot (y,s)| \leq \frac{A}{s^2}(|y|^3 + 1), \quad \forall y\in \RN,$$
$$\|Q_e(s)\|_{L^\infty(\RN)} \leq \frac{A^2}{\sqrt s},$$
where $Q_m$ ($m = 0, 1, 2$), $Q_-$, $Q_\bot$ and $Q_e$ are defined as in \eqref{decomqQ} and \eqref{decomQ2}.

\noindent $(ii)$ \textit{Estimates in $\Dc_2$:} For all $|x| \in \left[\frac{K_0}{4}\sqrt{|\ln(T-t)|(T-t)}, \epsilon_0\right]$, $\tau = \tau(x,t) = \frac{t - t(x)}{\theta(x)}$ and $|\xi| \leq \alpha_0 \sqrt{\ln \theta(x)}$, 
$$\left|\Uc(x,\xi, \tau) - \hat \Uc(\tau)\right| \leq \delta_0, \quad |\nabla_\xi \Uc(x, \xi, \tau)| \leq \frac{C_0}{\sqrt{|\ln \theta(x)|}}, \quad |\nabla^2_\xi \Uc(x, \xi, \tau)| \leq C_0',$$
where $\Uc$, $\hat \Uc$, $t(x)$ and $\theta(x)$ are defined in \eqref{def:Uc}, \eqref{def:solUc}, \eqref{def:tx} and \eqref{def:thetax} respectively.

\noindent $(iii)$ \textit{Estimates in $\Dc_3$:} For all $|x| \geq \frac{\epsilon_0}{4}$, 
$$|U(x,t) - U(x,t_0)| \leq \eta_0, \quad |\nabla_x U(x,t) - \nabla_x U(x,t_0)| \leq \eta_0.$$

\noindent For all $t_0 < T$, we define $\Sc^*(t_0, K_0, \epsilon_0, \alpha_0, A, \delta_0, \eta_0, C_0, C_0')$ being the set of all functions $U \in \mathcal{C}([t_0,T), \mathcal{H}_{a})$ such that  
$$U(t) \in \Sc^*(t_0, K_0, \epsilon_0, \alpha_0, A, \delta_0, \eta_0, C_0, C_0', t), \quad \forall t \in [t_0, T).$$
\end{definition}
\begin{remark} The estimates on $\nabla_y Q$ in $\Dc_1$, $\nabla_\xi \Uc$ in $\Dc_2$ and $\nabla_x U$ in $\Dc_3$ allow us to control the nonlinear gradient term $G(Q)$ appearing in equation \eqref{equ:Q}. Note that in the case when $G(Q)$ does not appear, the only estimates on $\Dc_1$ are enough to fully control the solution (see \cite{MZdm97}). Therefore, this part makes the originality of the paper. 
\end{remark}

\bigskip

We will show that the proof of Theorem \ref{theo:1} reduces to find suitable parameters $t_0 < T$, $K_0$, $\epsilon_0$, $\alpha_0$, $A$, $\delta_0$, $\eta_0$, $C_0$ and $U_0 \in \mathcal{H}_{a}$ so that the solution $U$ of equation \eqref{equ:problem} with data $U(t_0) = U_0$ belongs to $\Sc^*(t_0, K_0,\epsilon_0, \alpha_0, A, \delta_0, \eta_0, C_0, C_0', t_0)$. As a matter of fact, through \textit{a priori estimate}, we will show that the control of $U(t)$ in $\Sc^*(t_0, K_0,\epsilon_0, \alpha_0, A, \delta_0, \eta_0, C_0, C_0', t)$ for $t \in [t_0, T)$ reduces to the control of $(Q_0, Q_1)(s)$ in 
\begin{equation}\label{def:VcA}
\hat\Vc_A(s) = \left[-\frac{A}{s^2}, \frac{A}{s^2}\right]^{N + 1},
\end{equation}
for $s \geq -\ln (T - t_0)$ (recall that $Q_0$ and $Q_1$ correspond to expanding eigenvalues in the $Q$ variable \eqref{def:Qpsi}). Hence, we will consider initial data $U_0$ depending on $(N+1)$ parameters $(d_0, d_1) \in \Rb \times \RN$ of the form:
\begin{align}
&U_{d_0,d_1}(x, t_0) = \hat U^*(x)(1 - \chi_1(x,t_0))\label{def:U0d0d1}\\
& + \left\{s_0 + \ln \left[ \frac{A}{s_0^2}\left(d_0 + d_1\cdot x e^{\frac{s_0}{2}}\right)\chi\left(16xe^{\frac{s_0}{2}},s_0\right)  + \psi_\alpha\left(xe^{\frac{s_0}{2}}, s_0\right)\right] \right\} \chi_1(x,t_0),\nonumber
\end{align}
where $s_0 = -\ln(T-t_0)$, $\psi_\alpha$ and $\chi$ are defined in \eqref{def:Qpsi} and \eqref{def:chi}, 
\begin{equation}\label{def:chi1}
\chi_1(x,t_0) = \chi_0 \left(\frac{|x|}{|\ln(T-t_0)|\sqrt{T-t_0}}\right),
\end{equation}
with $\chi_0$ being defined right before \eqref{def:chi}, and $\hat U^* \in \mathcal{C}^\infty(\RN \backslash \{0\})$ is defined by 
\begin{equation}\label{def:Uhatstar}
\hat U^*(x) = \left\{ \begin{array}{ll}
 \ln\left(\frac{(8+8\alpha)|\ln|x||}{|x|^2} \right) &\quad \text{for}\quad |x| \leq C(a, \alpha),\\
 -\ln\left(1 + a |x|^2\right) &\quad \text{for} \quad |x|\geq 1.
\end{array}
 \right.
\end{equation}

Note that $\chi\left(16xe^{\frac{s_0}{2}},s_0\right)(1 - \chi_1(x,t_0)) = 0$ for $s_0$ large, hence, the initial data \eqref{def:U0d0d1} has an equivalence in the $Q$ variable \eqref{def:Qpsi},
\begin{equation}\label{def:barQs0}
\bar Q_{d_0,d_1}(y) = Q_{d_0,d_1}(y,s_0) = \frac{A}{s_0^2}(d_0 + d_1\cdot y)\chi(16y, s_0)e^{\chi_1(x, t_0)}.
\end{equation}
In what follows, the solution of equation \eqref{equ:problem} with initial data \eqref{def:U0d0d1} will be denote by $U_{d_0,d_1}(x,t)$ or $U(x,t)$ when there is no ambiguity. We also write $\Sc^*(t)$ and $Q(y,s)$  instead of \\
$\Sc^*(t_0, K_0,\epsilon_0, \alpha_0, A, \delta_0, \eta_0, C_0, C_0', t)$ and $Q_{d_0,d_1}(y,s)$ (the solution of equation \eqref{equ:Q} with initial data \eqref{def:barQs0}) for simplicity. 

We aim at proving the following central proposition which implies Theorem \ref{theo:1}:
\begin{proposition}[\textbf{Existence of a solution of equation \eqref{equ:problem} trapped in $\mathcal{S}^*(t)$}]\label{prop:UinSt} We can choose parameters $t_0 < T$, $K_0$, $\epsilon_0$, $\alpha_0$, $A$, $\delta_0$, $\eta_0$ and $C_0$ such that the following holds: there exists $(d_0, d_1) \in \Rb \times \RN$ such that if $U(x,t)$ is the solution of \eqref{equ:problem} with initial data at $t = t_0$ given by \eqref{def:U0d0d1}, then $U(x,t)$ exists for all $(x,t) \in \RN \times [t_0,T)$ and satisfies
$$U(t) \in \Sc^*(t_0, K_0,\epsilon_0, \alpha_0, A, \delta_0, \eta_0, C_0, C_0', t), \quad \forall t \in [t_0,T).$$

\end{proposition}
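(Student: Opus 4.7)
The plan is to follow the topological shooting scheme of Merle--Zaag \cite{MZdm97}, whose heart is the reduction Proposition~\ref{prop:redu}. First I would fix the parameters in a careful order: choose $K_0$ large (so that $\psi_\alpha$ is small outside $\mathcal{D}_1$ and $V$ is pointwise close to $-1$ there), then $\epsilon_0, \alpha_0$ small and $\delta_0, \eta_0, C_0, C_0'$ moderate (so that on $\mathcal{D}_2$ the rescaled function $\mathcal{U}$ is genuinely close to the explicit ODE solution $\hat{\mathcal{U}}$), then $A$ large compared with all fixed constants, and finally $t_0$ close enough to $T$ that $s_0 = -\ln(T-t_0)$ is as large as we need.

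Next I would verify that with these parameters, the two-parameter family of initial data $U_{d_0, d_1}(\cdot, t_0)$ defined in \eqref{def:U0d0d1} lies in $\mathcal{S}^*(t_0)$ for every $(d_0, d_1)$ in the rectangle $\mathcal{D}_{s_0} = [-1,1]^{N+1}$. Using the equivalent formulation \eqref{def:barQs0}, a direct expansion in the $(h_\beta)$ basis gives $Q_0(s_0) = \frac{A}{s_0^2} d_0 + O(s_0^{-3})$, $Q_1(s_0) = \frac{A}{s_0^2}(d_1 + o(1))$, while $Q_2(s_0)$, $Q_-(s_0)$, $(\nabla Q)_\bot(s_0)$ and $Q_e(s_0)$ are strictly interior to their allowed bounds, uniformly in $(d_0, d_1)$. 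In $\mathcal{D}_2$ and $\mathcal{D}_3$, the cutoff structure of \eqref{def:U0d0d1} together with the definition of $\hat{U}^*$ in \eqref{def:Uhatstar} gives strict interior estimates at $t=t_0$ (the quantities in items $(ii)$ and $(iii)$ of Definition \ref{def:St} even vanish at $t = t_0$, up to errors controlled by $|\ln(T-t_0)|^{-1/2}$). Moreover, as $(d_0, d_1)$ runs over $\partial \mathcal{D}_{s_0}$, the normalised positive modes $\frac{s_0^2}{A}(Q_0(s_0), Q_1(s_0))$ cover $\partial[-1,1]^{N+1}$ with degree one.

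Third, I would argue by contradiction. Assume that for no $(d_0, d_1) \in \mathcal{D}_{s_0}$ does $U_{d_0, d_1}$ stay in $\mathcal{S}^*(t)$ up to $T$. Define the exit time
$$t_*(d_0, d_1) = \sup\bigl\{t \in [t_0, T) : U(t') \in \mathcal{S}^*(t') \text{ for all } t' \in [t_0, t]\bigr\} < T,$$
and set $s_* = -\ln(T - t_*)$. By Proposition~\ref{prop:redu} (the reduction of the problem to a finite-dimensional one), at $t = t_*$ every estimate in Definition~\ref{def:St} is strictly improved except possibly those on $(Q_0, Q_1)$, so $\frac{s_*^2}{A}(Q_0(s_*), Q_1(s_*)) \in \partial[-1,1]^{N+1}$. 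The linearisation of equation~\eqref{equ:Q} around the profile gives $Q_0' \approx Q_0$ and $Q_1' \approx \tfrac{1}{2} Q_1$ (with remainders controlled in the set $\mathcal{S}^*$), so the exit at $s_*$ is transversal outward. Combined with continuous dependence of the flow on initial parameters, this produces a continuous map
$$\Psi : \mathcal{D}_{s_0} \longrightarrow \partial[-1,1]^{N+1}, \qquad \Psi(d_0, d_1) = \frac{s_*^2}{A}\bigl(Q_0(s_*), Q_1(s_*)\bigr),$$
whose restriction to $\partial \mathcal{D}_{s_0}$ is homotopic to the identity (by step two, exit is essentially immediate on the boundary). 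This contradicts the fact that there is no continuous retraction of a closed ball onto its boundary (Brouwer / index theory), providing the desired $(d_0, d_1)$.

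The main obstacle is Proposition~\ref{prop:redu} itself. The novelty, as the authors stress, is the control of the nonlinear gradient term $G(Q)$ in \eqref{def:GQ}: inside $\mathcal{D}_1$ it is absorbed by the new estimate on $(\nabla_y Q)_\bot$ in item~$(i)$ of Definition~\ref{def:St}, but in $\mathcal{D}_2 \cup \mathcal{D}_3$ the spectrum of the linearised operator on $Q$ is only $\leq -1 + \varepsilon$, which still does not absorb a term of size $|\nabla Q|^2/\Gamma_\alpha$ written purely in the $(y,s)$ variables. The remedy, which I anticipate to be the delicate step, is the multi-scale setup: work on $\mathcal{D}_2$ with the rescaled function $\mathcal{U}$ of \eqref{def:Uc}, exploiting that $\mathcal{U}$ solves the \emph{same} equation~\eqref{eq:Ucxt} and thus admits classical parabolic $W^{1,\infty}$ estimates compared to the ODE solution $\hat{\mathcal{U}}$; and on $\mathcal{D}_3$ use the local Cauchy theory in $\mathcal{H}_a$ (Remark~\ref{rem:Cauchypro}) to control $U$ and $\nabla U$ directly. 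The matching of these three regions on the overlaps $\{|y| \asymp K_0\sqrt{s}\}$ and $\{|x| \asymp \epsilon_0\}$, and the projection of the gradient source terms onto the spectral modes, is where the new ideas announced in the introduction are needed.
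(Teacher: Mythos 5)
Your proposal matches the paper's proof in all essential respects: you fix the parameters in the same order, use Proposition~\ref{prop:PropInitialdata} to check that the two-parameter family of initial data~\eqref{def:U0d0d1} starts in $\mathcal{S}^*(t_0)$ with the positive modes sweeping out $\partial\hat{\mathcal{V}}_A(s_0)$ as $(d_0,d_1)$ runs over the boundary of the parameter domain, invoke Proposition~\ref{prop:redu} to reduce exit to the $(Q_0,Q_1)$ modes with transversal (hence continuous) exit time, and close with the standard Brouwer/degree contradiction — which is exactly the paper's ``Step~1/Step~2'' structure, with your ``no continuous retraction onto the boundary'' phrasing equivalent to the index-theory argument. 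You also correctly locate the real difficulty in Proposition~\ref{prop:redu} (handling $G(Q)$ via the three-region, multi-scale setup), which the paper proves separately in Section~\ref{sec:reductofn}.
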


\medskip

Before going to the proof of Proposition \ref{prop:UinSt}, let us first make sure that the initial data \eqref{def:U0d0d1} starts in $\Sc^*(t_0)$ by selecting the good parameters $(d_0,d_1)$. More precisely, we have the following:

\begin{proposition}[\textbf{Properties of the initial data \eqref{def:U0d0d1}}]\label{prop:PropInitialdata} 
There exists $K_{0,1} > 0$ such that for each $K_0 \geq K_{0,1}$ and $\delta_{0,1} > 0$, there exist $\alpha_{0,1}(K_0, \delta_{0,1}) > 0$ and $C_{0,1}(K_0) > 0$ such that for all $\alpha_0 \in (0, \alpha_{0,1}]$, there exists $\epsilon_{0,1}(K_0, \delta_{0,1}, \alpha_0) > 0$ such that for all $ \epsilon_0 \in  (0,\epsilon_{0,1}]$ and $A \geq 1$, there exists $t_{0,1}(K_0, \delta_{0,1}, \epsilon_0, A, C_{0,1}) < T$ such that for all $t_0 \in [t_{0,1}, T)$,

\noindent $(I)$ There exists a subset 
$$\mathcal{D}_{t_0, A} \subset [-2,2] \times [-2, 2]^N$$
such that the mapping 
\begin{align*}
\Lambda: \Rb \times \RN\;\; &\to\;\; \Rb \times \RN\\
(d_0,d_1)\;\; &\mapsto\;\; (\bar Q_{0}, \bar Q_{1})
\end{align*}
(where $\bar Q$ stands for $Q_{d_0,d_1}(y,s_0)$ given by \eqref{def:barQs0} and $s_0 = -\log(T- t_0)$) is linear, one to one from $\Dc_{t_0,A}$ onto $\hat \Vc_A(s_0)$ defined by \eqref{def:VcA} and maps $\partial \Dc_{t_0,A}$ into $\partial \hat \Vc_A(s_0)$. \\

\noindent $(II)$ For all $(d_0,d_1) \in \Dc_{t_0,A}$, we have $U_{d_0,d_1}(x,t_0)$ defined by \eqref{def:U0d0d1} belongs in\\
$\Sc^*(t_0, K_0, \epsilon_0, \alpha_0, A, \delta_{0,1}, \eta_0 = 0, C_{0,1}, t_0)$. More precisely,

 $(i)$ \textit{Estimates in $\Dc_1$:} $\bar Q(s_0) \in \Vc_{A, K_0}(s_0)$ with strict inequalities, except for $(\bar Q_0, \bar Q_1)(s_0)$ in the sense that
$$
|\bar Q_0(s_0)| \leq \frac{A}{s_0^2},\quad |\bar Q_{1,i}(s_0)|\leq \frac{A}{s_0^2},\quad |\bar Q_{2,ij}(s_0)| \leq \frac{\ln s_0}{s_0^2},  \quad  \forall i,j \in \{1, \cdots, N\},$$
$$|\bar Q_-(y,s_0)| \leq \frac{1}{s_0^2}(|y|^3 + 1),\quad |(\nabla_y \bar Q)_\bot (y,s_0)| \leq \frac{1}{s_0^2}(|y|^3 + 1), \quad \forall y\in \RN,$$
$$\|\bar Q_e(s_0)\|_{L^\infty(\RN)} \leq \frac{1}{\sqrt s_0},$$
where $\bar Q_m$ ($m = 0, 1, 2$), $\bar Q_-$, $\bar Q_\bot$ and $\bar Q_e$ are defined as in \eqref{decomqQ} and \eqref{decomQ2}.

$(ii)$ \textit{Estimates in $\Dc_2$:} For all $|x| \in \left[\frac{K_0}{4}\sqrt{|\ln(T-t_0)|(T-t_0)}, \epsilon_0\right]$, $\tau_0 = \tau_0(x,t_0) = \frac{t_0 - t(x)}{\theta(x)}$ and $|\xi| \leq \alpha_0 \sqrt{\ln \theta(x)}$, 
$$\left|\Uc(x,\xi, \tau_0) - \hat \Uc(\tau_0)\right| \leq \delta_{0,1}, \;\; |\nabla_\xi \Uc(x, \xi, \tau_0)| \leq \frac{C_{0,1}}{\sqrt{|\ln \theta(x)|}}, \;\; |\nabla^2_\xi \Uc(x, \xi, \tau_0)| \leq C_{0,1},$$
where $\Uc$, $\hat \Uc$, $t(x)$ and $\theta(x)$ are defined in \eqref{def:Uc}, \eqref{def:solUc}, \eqref{def:tx} and \eqref{def:thetax} respectively.
\end{proposition}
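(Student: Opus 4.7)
The plan is to compute the initial data \eqref{def:U0d0d1} in the $Q$-variable via \eqref{def:Qpsi} and then verify each bound in the definition of $\Sc^*(t_0)$ directly. Since $\chi(16y,s_0)$ is supported in $\{|y|\le K_0\sqrt{s_0}/8\}$, and on that set $\chi_1(x,t_0)=1$ for $s_0$ large (as $|x|\le\tfrac{K_0}{8}\sqrt{s_0}\,e^{-s_0/2}\ll|\ln(T-t_0)|\sqrt{T-t_0}$), formula \eqref{def:barQs0} reduces to $\bar Q(y)=\tfrac{A}{s_0^2}(d_0+d_1\cdot y)\chi(16y,s_0)$ (up to a bounded constant from $e^{\chi_1}$), and $\bar Q_e\equiv 0$ because $\mathrm{supp}(\bar Q)\subset\{\chi(\cdot,s_0)\equiv 1\}$. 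Writing $\bar Q=P+\bar R$ with polynomial $P(y)=\tfrac{A}{s_0^2}(d_0+d_1\cdot y)$ and cutoff error $\bar R=P\cdot(\chi(16\cdot,s_0)-1)$ supported in $\{|y|\ge K_0\sqrt{s_0}/16\}$, the polynomial $P$ projects exactly onto $h_0$ and $h_1$, while $\bar R$ lives in the Gaussian tail and contributes only $O(e^{-cs_0})$ to every Hermite projection.

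For Part~(I), this decomposition gives $\bar Q_0=\tfrac{A}{s_0^2}d_0+O(e^{-cs_0})$ and $\bar Q_{1,i}=\tfrac{A}{s_0^2}d_{1,i}+O(e^{-cs_0})$, so $\Lambda$ is a small linear perturbation of the homothety $\tfrac{A}{s_0^2}\mathrm{Id}$ and is an isomorphism for $s_0$ large. Defining $\Dc_{t_0,A}:=\Lambda^{-1}(\hat\Vc_A(s_0))$ yields a set close to $[-1,1]^{N+1}\subset[-2,2]^{N+1}$, and $\Lambda(\partial\Dc_{t_0,A})\subset\partial\hat\Vc_A(s_0)$ follows from bijectivity. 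For Part~(II)(i), the bounds on $\bar Q_0,\bar Q_{1,i}$ are immediate from the definition of $\Dc_{t_0,A}$; all higher Hermite projections come only from $\bar R$, so $|\bar Q_{2,ij}|=O(e^{-cs_0})\ll(\ln s_0)/s_0^2$. The pointwise bound on $\bar Q_-$ splits at $|y|=K_0\sqrt{s_0}/16$: inside, only an exponentially small $P_2$-subtraction remains; outside, $|\bar Q-\bar Q_0-\bar Q_1\cdot y|\le \tfrac{2A(1+|y|)}{s_0^2}\le\tfrac{1}{s_0^2}(|y|^3+1)$, because $|y|^3\ge(K_0\sqrt{s_0}/16)^3$ dominates $1+|y|$ for $s_0$ large. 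The bound on $(\nabla_y\bar Q)_\bot$ is analogous (the gradient of $\chi(16\cdot,s_0)$ is $O(s_0^{-1/2})$ and supported in an annulus of thickness $\sim\sqrt{s_0}$), and $\|\bar Q_e\|_\infty=0$ is trivial.

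For Part~(II)(ii), the lower bound $|x|\ge\tfrac{K_0}{4}\sqrt{(T-t_0)|\ln(T-t_0)|}$ gives $|xe^{s_0/2}|\ge K_0\sqrt{s_0}/4$, outside $\mathrm{supp}\,\chi(16\cdot,s_0)$. Hence the $(d_0,d_1)$-perturbation drops from $U(x,t_0)$, which equals $s_0+\ln\psi_\alpha(xe^{s_0/2},s_0)$ where $\chi_1(x,t_0)=1$, equals $\hat U^*(x)$ where $\chi_1(x,t_0)=0$, and is a smooth interpolation in the narrow annulus of width $\sim|\ln(T-t_0)|\sqrt{T-t_0}$. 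Substituting into $\Uc(x,\xi,\tau_0)=\ln\theta(x)+U(x+\xi\sqrt{\theta(x)},t_0)$ and using the identities $|x|^2=(K_0/4)^2\theta(x)|\ln\theta(x)|$ and $1-\tau_0=(T-t_0)/\theta(x)$, a direct computation verifies that the leading-order terms combine into $\hat\Uc(\tau_0)=-\ln((1-\tau_0)+K_0^2/(16(4+4\alpha)))$; the $\xi$-dependence is captured by Taylor-expanding $\Phi_\alpha$ around $z_0=\tfrac{K_0}{4}\tfrac{x}{|x|}$ with increment $O(\alpha_0)$, while the transition annulus is handled using the logarithmic asymptotics of $\hat U^*$ from \eqref{def:Uhatstar} (which is precisely tuned so that $\ln\theta(x)+\hat U^*(x)$ matches $\hat\Uc(\tau_0)$ to leading order). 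The remainder is $O(\alpha_0)+O(1/|\ln(T-t_0)|)+O(\epsilon_0)$, which is $\le\delta_{0,1}$ after successively choosing $\alpha_0,\epsilon_0$ small and then $T-t_0$ small; differentiating the same explicit formulas yields $|\nabla_\xi\Uc|=O(1/\sqrt{|\ln\theta|})$ and $|\nabla_\xi^2\Uc|=O(1)$, providing the constant $C_{0,1}$. The main obstacle is precisely this matching: carefully tracking $\ln\theta(x)$ versus $\ln(T-t_0)$ (which differ by $-\ln(1-\tau_0)$, possibly large near $|x|=\epsilon_0$ where $\tau_0\to 1$) and ensuring that $C_{0,1}$ depends only on $K_0$ and not on $\alpha_0,\epsilon_0,A$ or $t_0$; by contrast the estimates in $\Dc_1$ follow routinely from the Gaussian tail of $\rho$.
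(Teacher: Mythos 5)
The proposal is correct and follows the paper's approach in Appendix~A: for parts (I) and (II)(i) the paper delegates the Hermite-projection and Gaussian-tail computations to Merle--Zaag~'97, which is exactly what you spell out, and for part (II)(ii) the paper performs the same splitting by the cutoff $\chi_1$ and the same logarithmic manipulations using $|x|^2=(K_0/4)^2\theta(x)|\ln\theta(x)|$ and $1-\tau_0=(T-t_0)/\theta(x)$, with the same successive choice of $\alpha_0,\epsilon_0,t_0$. One caveat: describing the $\xi$-dependence as a Taylor expansion of $\Phi_\alpha$ around $z_0=\tfrac{K_0}{4}x/|x|$ is a misleading mental picture, because the argument of $\Phi_\alpha$ actually appearing in $\Uc$, namely $(x+\xi\sqrt{\theta(x)})/\sqrt{\theta_0 s_0}$, has modulus of order $\sqrt{s_0}$ (not $K_0/4$) near the transition annulus $|x|\sim R_0=\sqrt{\theta_0}|\ln\theta_0|$; the correct bookkeeping, as in the paper's estimates \eqref{est:IUc}--\eqref{est:IIUc}, is a direct comparison of the quantities inside the logarithms using $\ln\theta(x)\sim\ln\theta_0$, not a small-increment Taylor expansion.
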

The proof of Proposition \ref{prop:PropInitialdata} is left to Appendix \ref{sec:preinti}. Let us assume that Proposition \ref{prop:PropInitialdata} holds and continue the proof of Proposition \ref{prop:UinSt}. The proof of Theorem \ref{theo:1} will follow from Proposition \ref{prop:UinSt} afterward.

\begin{proof}[\textbf{Proof of Proposition \ref{prop:UinSt}}] The proof of Proposition \ref{prop:UinSt} follows from the general ideas developed in \cite{MZdm97}. We proceed into two steps:\\
- In the first step, we reduce the problem of controlling $U(t)$ in $\Sc^*(t)$ to the control of $(Q_0, Q_1)(s)$ in $\hat \Vc_A(s)$, where $Q_0$ and $Q_1$ are the components of $Q(s)$ corresponding to the positive modes given in the decomposition \eqref{decomqQ} and $\hat \Vc_A(s)$ defined by \eqref{def:VcA}. This means that we reduce an infinite dimensional problem to a finite dimensional one.\\
- In the second step, we argue by contradiction to solve the finite dimensional problem thanks to the dynamics of $(Q_0, Q_1)(s)$ and a topological argument based on the variation of the finite dimensional parameters $(d_0, d_1)$ appearing in the definition of initial data \eqref{def:barQs0}.\\

\noindent\textit{Step 1: Reduction to a finite dimensional problem}. 

In this step, we show through \textit{a priori estimate} that the control of $U(t)$ in $\Sc^*(t)$ reduces to the control of $(Q_0, Q_1)(s)$ in $\hat{\Vc}_A(s)$ defined by \eqref{def:VcA}. This result crucially follows from a good understanding of the properties of the linear operator $\Ls + V$ of equation \eqref{equ:Q} in the \textit{blowup region} $\Dc_1$ together with advanced parabolic techniques applied to equation \eqref{equ:problem} involving a nonlinear gradient term for analysis in the intermediate and regular regions $\Dc_2$ and $\Dc_3$. In particular, we claim the following which is the heart of our contribution:
\begin{proposition}[\textbf{Control of $U(t)$ by $(Q_0,Q_1)(s)$ in $\hat\Vc_{A}(s)$}]\label{prop:redu} We can choose parameters $t_0 < T$, $K_0$, $\epsilon_0$, $\alpha_0$, $A$, $\delta_0$, $\eta_0$ and $C_0$ such that the following properties hold. Assume that $U(x,t_0)$ is given by \eqref{def:U0d0d1} with $(d_0,d_1) \in \Dc_{t_0,A}$. Assume in addition that for some $t^* \in [t_0,T)$, 
$$U(t) \in \Sc^*(t_0, K_0, \epsilon_0, \alpha_0, A, \delta_0, \eta_0, C_0, C_0', t), \quad \forall t \in [t_0,t^*],$$
and 
$$U(t^*) \in \partial\Sc^*(t_0, K_0, \epsilon_0, \alpha_0, A, \delta_0, \eta_0, C_0, C_0', t^*).$$
Then, we have\\
$(i)$ (Finite dimensional reduction) $(Q_0, Q_1)(s^*) \in \partial \hat{\mathcal{V}}_{A}(s^*)$ with $s^* = -\log(T - t^*)$.\\
$(ii)$ (Transversality) There exists $\mu_0 > 0$ such that for all $\mu \in (0, \mu_0)$, 
$$(Q_0, Q_1)(s^* + \mu) \not \in \hat{\mathcal{V}}_{A}(s^* + \mu),$$
hence,
$$ U(t^* + \mu') \not  \in \Sc^*(t_0, K_0, \epsilon_0, \alpha_0, A, \delta_0, \eta_0, C_0, C_0', t^* + \mu'), \quad \mu' = \mu'(t^*,\mu) > 0.$$
\end{proposition}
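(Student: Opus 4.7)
The plan is to establish Part (i) by showing that each of the bounds in Definition \ref{def:St}, \emph{except} for the $Q_0$ and $Q_1$ components, is strictly improved on $[t_0, t^*]$ under the assumption $U(t) \in \Sc^*(t)$. Since the shrinking set $\Sc^*(t^*)$ is reached only at the boundary, this forces the saturation to occur in the positive modes, i.e.\ $(Q_0,Q_1)(s^*) \in \partial \hat\Vc_A(s^*)$. I would organize the argument region by region, following the three pieces $\Dc_1, \Dc_2, \Dc_3$ of Definition \ref{def:St}.

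In $\Dc_1$ I would use the Duhamel formula \eqref{for:qint} and the spectral decomposition \eqref{decomqQ}, projecting equation \eqref{equ:Q} onto the eigenspaces of $\Ls$. The potential $V$ is small in $L^2_\rho$ on the inner region (item (i) of the discussion after \eqref{def:kernel}), so $\mathcal{K}(s,\sigma)$ can be treated as a perturbation of the semigroup of $\Ls$ on the modes $m \geq 3$ and on $(\nabla Q)_\bot$, yielding a strict improvement of the bounds $\tfrac{A}{s^2}(|y|^3+1)$ (the negative eigenvalues $1-\tfrac m2$ dominate over the contributions from $Q^2$, $G(Q)$ and $R=O(1/s)$). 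For the null mode $Q_2$, the potential $V$ is no longer negligible: a careful projection of \eqref{equ:Q} onto $h_\beta$ for $|\beta|=2$, combined with the quadratic nonlinearity and the $(\alpha-1)$ gradient term, produces the effective ODE $Q_2' \sim -\frac{c}{s} Q_2 + O(1/s^3)$ with $c>0$, which yields the strict bound $\frac{A^2 \ln s}{2 s^2}$ for $A$ large and $s_0$ large. The only genuinely nontrivial estimate in $\Dc_1$ is the outer part $Q_e$: here I would exploit item (ii) of the discussion after \eqref{def:kernel}, namely that $\operatorname{spec}(\Ls + V) \leq -c_1 < 0$ on $\{|y| \geq K_0\sqrt{s}\}$, combined with maximum principle / semigroup estimates on the truncated kernel and the fact that $G(Q)$ contains gradient terms that must be controlled using the bounds transferred from $\Dc_2$ (see below).

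In $\Dc_2$, I would apply interior parabolic regularity to the rescaled solution $\Uc$ defined in \eqref{def:Uc}, which satisfies the same equation \eqref{eq:Ucxt} as $U$. The match at $|x| = \tfrac{K_0}{4}\sqrt{(T-t)|\ln(T-t)|}$ comes from the $\Dc_1$ estimates (which give $\Uc(x,\xi,\tau_0) \approx \hat \Uc(\tau_0)$ at the boundary), and Schauder/$L^p$ parabolic estimates applied to $\Uc - \hat\Uc$ on the ball $|\xi| \leq \alpha_0\sqrt{|\ln\theta(x)|}$ improve $\delta_0, C_0, C_0'$ strictly, provided $\alpha_0, \epsilon_0$ were chosen small enough. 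In $\Dc_3$, for $|x|\geq \epsilon_0/4$ the solution stays bounded in $W^{1,\infty}_{\mathrm{loc}}$ by the local Cauchy theory of Remark \ref{rem:Cauchypro}; uniform continuity on $[t_0,T]$ with $T-t_0$ small enough gives the strict improvement of $\eta_0$. Crucially, the $W^{1,\infty}$ control on $\Dc_2$ and $\Dc_3$ furnishes the a priori bound on $|\nabla Q|$ needed to close the $Q_e$ estimate in $\Dc_1$, closing the loop.

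For Part (ii), once every constraint except $(Q_0, Q_1) \in \partial\hat\Vc_A(s^*)$ is strictly satisfied, I would project \eqref{equ:Q} onto $h_0$ and $h_{1,i}$ against the weight $\rho$. Using the smallness of all remaining modes, $R=O(1/s)$, and the quadratic/gradient nonlinearities bounded by $O(1/s^3)$, I obtain the scalar ODEs
\begin{equation*}
Q_0'(s) = Q_0(s) + O(s^{-3}), \qquad Q_{1,i}'(s) = \tfrac{1}{2} Q_{1,i}(s) + O(s^{-3}).
\end{equation*}
If, say, $Q_0(s^*) = \varepsilon A/(s^*)^2$ with $\varepsilon = \pm 1$, then for $A$ large the sign of $Q_0'(s^*) - \frac{d}{ds}(\varepsilon A/s^2)|_{s^*}$ coincides with $\varepsilon$, so $Q_0$ exits $[-A/s^2, A/s^2]$ strictly; the same holds componentwise for $Q_{1,i}$. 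This yields $\mu_0 > 0$ such that $(Q_0,Q_1)(s^*+\mu)\notin \hat\Vc_A(s^*+\mu)$ for $\mu\in(0,\mu_0)$, and hence $U(t^*+\mu')\notin \Sc^*(t^*+\mu')$. The main obstacle I anticipate is the $Q_e$ estimate in Step 1: because $G(Q)$ in \eqref{def:GQ} involves $|\nabla Q|^2/(Q+\psi_\alpha)$, the bound $\|Q_e\|_{L^\infty} \leq A^2/\sqrt{s}$ cannot be closed using only $L^\infty$ information on $Q_e$; the intermediate and regular region estimates of Definition \ref{def:St} must feed back into the $\Dc_1$ analysis to provide pointwise control on $\nabla Q$, which is precisely the novelty emphasized in the introduction.
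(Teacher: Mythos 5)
Your proposal is correct and follows essentially the same path as the paper: project equation \eqref{equ:Q} onto the spectral modes of $\Ls$ and use the Duhamel formula to improve the bounds in $\Dc_1$ (Proposition \ref{prop:D1}), control $\Dc_2$ via a maximum-principle argument on $|\nabla\Uc|^2$ together with an averaging/ODE argument on $\Uc$ (Proposition \ref{prop:D2}), handle $\Dc_3$ via the local Cauchy theory and a truncated Duhamel estimate (Proposition \ref{prop:D3}), feed the $W^{1,\infty}$ control from $\Dc_2$/$\Dc_3$ back into the $Q_e$ bound to close the gradient term in $G(Q)$, and finally conclude transversality from the dynamics of $(Q_0,Q_1)$. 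One small correction: the ODEs in the paper satisfied by the positive modes have error $O(s^{-2})$, not $O(s^{-3})$ — see \eqref{equ:odev0} and \eqref{equ:odev01}, where $|Q_0' - Q_0| \le C/s^2$ and $|Q_{1,i}' - \tfrac12 Q_{1,i}| \le C/s^2$ — so the transversality conclusion genuinely requires choosing $A > C$ (yielding $\omega Q_0'(s_*) \ge (A-C)/s_*^2 > 0$), which you anticipate by invoking ``$A$ large'' but should be justified at the $O(s^{-2})$ level; similarly the error in the $Q_2$ equation \eqref{equ:odev2} is $CA/s^3$ rather than $O(1/s^3)$, which matters in fixing $A \ge C+1$ for the contradiction.
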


\begin{proof} The proof uses ideas of \cite{MZnon97} where the authors adapted the technique of \textit{a priori} estimates developed in \cite{BKnon94} and \cite{MZdm97} treated for equation \eqref{equ:sheGra} with $\alpha = 0$. Let us emphasize that the techniques introduced in \cite{BKnon94} and \cite{MZdm97} are not enough to handle the nonlinear gradient term appearing in equation \eqref{equ:Q}. Truly new ideas are needed to achieve the control of this term and this is one of the main novelties in this paper. The essential feature of the proof is that the given bootstrap bounds in Definition \ref{def:St} can be improved, except for the bounds on $(Q_0,Q_1)$. More precisely, the improvement of the bounds in the \textit{blowup region} $\Dc_1$ (except for $Q_0,Q_1$) is done through projecting equation \eqref{equ:Q} on the different components of $Q$ introduced in \eqref{decomqQ} and \eqref{decomQ2}. One can see that the components $Q_2$, $Q_-$, $Q_\bot$ and $Q_e$ corresponding to decreasing directions of the flow are small at $s = s_0$ and they remain small up to $s = s^*$, hence, they can not touch their boundary. In $\Dc_2$ and $\Dc_3$, we use advanced parabolic techniques applied to equation \eqref{equ:problem} involving a nonlinear gradient term in order to achieve the improvement. Therefore, only $Q_0$ and $Q_1$ may touch their boundary at $s = s^*$ and the conclusion follows. Since we would like to keep the proof of Proposition \ref{prop:UinSt} short, we leave the proof of Proposition \ref{prop:redu} to Section \ref{sec:reductofn} below.
\end{proof}

\noindent\textit{Step 2: Topological argument for the finite dimensional problem}. \label{step2:topoarg}

From Proposition \ref{prop:redu}, we claim that there exist $(d_0,d_1) \in \Dc_{t_0,A}$ such that the equation \eqref{equ:problem} with initial data \eqref{def:U0d0d1} has a solution 
$$U_{d_0,d_1}(t) \in \Sc^*(t_0, K_0, \epsilon_0, \alpha_0, A, \delta_0, \eta_0, C_0, C_0', t)\quad  \text{for all $t \in [t_0, T)$},$$
for suitable choice of the parameters. Note that the argument of the proof is not new and it is analogous as in \cite{MZdm97}. Let us gives its main ideas.

Let us consider $t_0, K_0, \epsilon_0, \alpha_0, A, \delta_0, \eta_0, C_0$ such that Propositions \ref{prop:redu} and \ref{prop:PropInitialdata} hold. From Proposition \ref{prop:PropInitialdata}, we have 
\begin{align*}
\forall (d_0,d_1) \in \Dc_{t_0,A}, \quad U_{d_0,d_1}(x,t_0) \in \Sc^*(t_0, K_0, \epsilon_0, \alpha_0, A, \delta_0, \eta_0, C_0, C_0', t_0),
\end{align*}
where $U_{d_0,d_1}(x,t_0)$ is given by \eqref{def:U0d0d1}. Note that $U_{d_0,d_1}(x,t_0) \in \mathcal{H}_{a}$ introduced in \eqref{def:Ha1a2}. Therefore, from the local existence theory for the Cauchy problem of \eqref{equ:problem} in $\mathcal{H}_{a}$, we can define for each $(d_0,d_1) \in \Dc_{t_0,A}$ a maximum time $t_*(d_0,d_1) \in [t_0,T)$ such that 
$$U_{d_0,d_1}(t) \in \Sc^*(t_0, K_0, \epsilon_0, \alpha_0, A, \delta_0, \eta_0, C_0, C_0', t) , \quad \forall t \in [t_0,t_*).$$ 
If $t_*(d_0,d_1) = T$ for some $(d_0, d_1) \in \mathcal{D}_{t_0,A}$, then the proof is complete. Otherwise, we argue by contradiction and assume that $t_*(d_0, d_1) < T$ for any $(d_0, d_1) \in \mathcal{D}_{t_0,A}$. By continuity and the definition of $t_*$, the solution $U_{d_0,d_1}(t)$ at time $t = t_*$ is on the boundary of $\Sc^*(t_*)$. From part $(i)$ of Proposition \ref{prop:redu}, we have
$$(Q_0, Q_1)(s_*) \in \partial\hat{\mathcal{V}}_A(s_*) \quad \text{with} \quad s_* = -\log(T - t_*).$$
Hence, we may define the rescaled flow $\Gamma$ at $s = s_*$ for $Q_0$ and $Q_1$ as follows:
\begin{align*}
\Gamma:\quad \mathcal{D}_{t_0,A}\quad &\mapsto \quad \partial([-1,1] \times [-1,1]^N)\\
(d_0, d_1)\quad &\to \quad \left(\frac{s_*^2}{A}Q_0(s_*), \frac{s_*^2}{A}Q_1(s_*)\right).
\end{align*}
It follows from part $(ii)$ of Proposition \ref{prop:redu} that $\Gamma$ is continuous. If we manage to prove that the degree of $\Gamma$ on the boundary is different from zero, then we have a contradiction from the degree theory. Let us prove that. From part $(i)$ Proposition \ref{prop:PropInitialdata}, we see that if $(d_0,d_1) \in \partial \mathcal{D}_{t_0,A}$, then 
$$Q(s_0) \in \mathcal{V}_{A,K_0}(s_0) \quad \text{and} \quad (Q_0, Q_1)(s_0) \in \partial \hat{\mathcal{V}}_{A}(s_0).$$
Using part $(ii)$ of Proposition \ref{prop:redu}, we see that $Q(s)$ must leave $\mathcal{V}_{A,K_0}(s)$ at $s = s_0$, hence, $s_*(d_0,d_1) = s_0$. Using again part $(i)$ of Proposition \ref{prop:PropInitialdata}, we see that the degree of $\Gamma$ on the boundary must be different from zero. This gives us a contradiction (by the index theory) and concludes the proof of Proposition \ref{prop:UinSt}, assuming that Propositions \ref{prop:redu} and \ref{prop:PropInitialdata} hold.
\end{proof}

Let us now give the proof of Theorem \ref{theo:1} from Proposition \ref{prop:UinSt}, assuming that Propositions \ref{prop:PropInitialdata} and \ref{prop:redu} hold. 

\begin{proof}[\textbf{Proof of Theorem \ref{theo:1}, assuming Propositions  \ref{prop:PropInitialdata} and \ref{prop:redu}}] We give in this part the proof of Theorem \ref{theo:1}. We will present the proofs of item $(i)$, $(ii)$ and $(iii)$ separately.

$(i)$ The proof of part $(i)$ is equivalent to the proof of \eqref{eq:goalQ} through the change of variables \eqref{def:Qpsi}. From Proposition \ref{prop:UinSt}, we know that equation \eqref{equ:problem} with the initial data given by \eqref{def:U0d0d1} has the solution $U(t) \in \Sc^*(t)$ for all $t \in [t_0, T)$. From part $(i)$ of Definition \ref{def:St}, we have $Q(s) \in \Vc_{A, K_0}(s)$ for all $s \in [-\log(T - t_0), +\infty)$, where $A$ and $K_0$ are some fixed large constants. In Proposition \ref{prop:propSt} below, we show that if  $Q(s) \in \Vc_{A, K_0}(s)$, then 
\begin{equation}\label{est:Qs}
\|Q(s)\|_{W^{1,\infty}(\RN)} \leq \frac{C(A,K_0)}{\sqrt{s}},
\end{equation}
which is the conclusion of \eqref{eq:goalQ} as well as \eqref{equ:behTh1}.\\

$(ii)$ Note that part $(i)$ also implies that $U$ and $\nabla U$ blow up in finite time $T$ at the origin. Indeed, by the definition \eqref{def:Phi_alpha} of $\Phi_\alpha$ and part $(i)$ of Theorem \ref{theo:1}, we have
$$U(0,t) \sim -\ln(T-t) \to +\infty \quad \text{as}\quad t \to T,$$
hence, $U$ and $e^U$ blow up in finite time $T$ at the origin.  As for $\nabla U$, we write from \eqref{def:Qpsi}, 
$$\nabla W - \nabla \Phi_\alpha = \frac{\nabla Q - \nabla \Phi_\alpha Q}{Q + \psi_\alpha}.$$
From the definition \eqref{def:Qpsi} of $\psi_\alpha$ and \eqref{est:Qs}, we see that 
$$\forall |y| \leq K_0\sqrt s,\quad  Q(y,s) + \psi_\alpha(y,s) \geq \psi_\alpha(K_0\sqrt s, s) - \|Q(s)\|_{L^\infty} > \frac{C}{K_0^2}.$$
We also show in Proposition \ref{prop:propSt} that if $Q(s) \in \Vc_{A, K_0}(s)$, then 
$$\forall y \in \RN, \quad |Q(y,s)| + |\nabla Q(y,s)| \leq \frac{C(A, K_0)\ln s}{s^2}(|y|^3 + 1).$$
Note from the definition \eqref{def:Phi_alpha} of $\Phi_\alpha$ that $|\nabla \Phi_\alpha(y/\sqrt s)| \leq \frac{C|y|}{s}$, we have for all $|y| \leq K_0 \sqrt s$, 
$$\left|\nabla W(y,s) - \nabla \Phi_\alpha\left(\frac {y}{\sqrt s}\right)\right| \leq \frac{C(A, K_0)\ln s}{s^2}(|y|^3 + 1) + \frac{C(A, K_0)\ln s}{s^3}(|y|^4 + 1). $$
From \eqref{def:changeVar} and \eqref{def:Phi_alpha}, we write
\begin{align*}
&\left|\nabla W(y,s) - \nabla \Phi_\alpha\left(\frac {y}{\sqrt s}\right)\right| = \left|\sqrt {T-t} \,\nabla U(x,t) + \frac{ye^{\Phi_\alpha\left(\frac{y}{\sqrt {|\ln (T-t)|}}\right)}}{(2 + 2\alpha) |\ln (T-t)|}\right|\\
&\leq \frac{C(A, K_0)\ln |\ln (T-t)|}{|\ln (T-t)|^2}(|y|^3 + 1) + \frac{C(A, K_0) \ln |\ln (T-t)|}{|\ln (T-t)|^3}(|y|^4 + 1).
\end{align*}
Put $y = y(s) = s^\frac{1}{4}\omega = |\ln (T-t)|^\frac 14 \omega$ with $|\omega| = 1$, we see that the right hand side of the above estimate is bounded by $\frac{\ln |\ln (T-t)|}{|\ln (T-t)|^\frac 54} \ll \frac{1}{|\ln (T-t)|^\frac 34}$ as $t \to T$. Therefore, we have 
$$\nabla U(|\ln (T-t)|^\frac 14 \sqrt {T-t},t) \sim \frac{C}{|\ln (T-t)|^\frac 34 \sqrt{T-t}} \quad \text{as} \quad t \to T.$$
Since $|\ln (T-t)|^\frac 34 \sqrt{T-t} \to 0$ as $t \to T$, hence, $\nabla U$ blows up at time $T$ at the origin. 

In order to prove that $e^U$ and $\nabla U$ blow up only at the origin, we use the following result:

\begin{proposition}[\textbf{No blowup under some threshold}] \label{prop:noblowup} Let $u(\xi, \tau)$ satisfy the following inequality: for all $K > 0$, for all $|\xi| < 1$ and $\tau \in [0, 1)$,
\begin{equation}\label{ine:pb}
\left\{\begin{array}{rl}
|\partial_\tau u - \Delta u|  &\leq K( 1+ e^u + |\nabla u|^2),\\
|\nabla(\partial_\tau u - \Delta u)|  &\leq K|\nabla(e^u + |\nabla u|^2)|.
\end{array}\right.
\end{equation}
Assume that there is a constant $\epsilon = \epsilon(K, N) > 0$ small enough such that 
\begin{equation}\label{con:ineUnU}
(1 - \tau)e^{u(\xi, \tau)} + \sqrt{1 - \tau}|\nabla u(\xi, \tau)| \leq \epsilon, \quad \forall |\xi| < 1, \;\tau \in [0, 1),
\end{equation}
then, 
\begin{equation}\label{est:UnU}
e^{u(\xi, \tau)} + |\nabla u(\xi, \tau)| \leq C\epsilon, \quad \forall |\xi|\leq \frac{1}{8}, \; \tau \in [0, 1).
\end{equation}
In particular, $e^u$ and $\nabla u$ do not blow up at $\xi = 0$ and $\tau = 1$. 
\end{proposition}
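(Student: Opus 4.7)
The plan is to reformulate the problem in similarity variables and conclude via a one-dimensional comparison argument for the upper envelope of $u$ in $y$. I set $s = -\ln(1-\tau)$, $y = \xi/\sqrt{1-\tau}$, and $w(y,s) = u(\xi, \tau) + \ln(1-\tau)$, so that the pointwise smallness hypothesis \eqref{con:ineUnU} becomes $e^{w(y,s)} \leq \epsilon$ and $|\nabla_y w(y, s)| \leq \epsilon$ on the rescaled domain. A routine computation converts the parabolic inequality \eqref{ine:pb} into
\begin{equation*}
\Bigl|\partial_s w - \Delta_y w + \tfrac{y}{2}\cdot \nabla_y w + 1 \Bigr| \leq K\bigl(e^{-s} + e^w + |\nabla_y w|^2\bigr).
\end{equation*}
Since $u = w + s$, the target conclusion $e^u \leq C\epsilon$ is equivalent to the upper bound $w + s \leq \ln(C\epsilon)$, so the task reduces to showing that $w$ decreases in $s$ at rate at least $-1$.

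Introduce the upper envelope $\Phi(s) = \sup_{y \in \Omega(s)} w(y,s)$, where $\Omega(s)$ is a ball strictly contained in the rescaled domain and large enough to cover the target point $\xi_0/\sqrt{1-\tau}$. At an interior maximizer $y^\ast(s)$, one has $\nabla_y w(y^\ast, s) = 0$ and $\Delta_y w(y^\ast, s) \leq 0$, so the inequality above yields the ODE bound
\begin{equation*}
D^+ \Phi(s) \leq -1 + K e^{-s} + K e^{\Phi(s)}.
\end{equation*}
The decisive feature is that the quadratic gradient term $|\nabla_y w|^2$ drops out at interior maxima, removing the constant-order $\epsilon^2$ defect that would otherwise preclude boundedness of $e^u$. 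Starting from the pointwise bound $\Phi(s_0) \leq \ln \epsilon$, a first integration yields the preliminary decay $\Phi(s) \leq \ln \epsilon - (1 - K\epsilon)(s-s_0) + \mathcal{O}(e^{-s_0})$, so that $e^\Phi$ decays exponentially. Feeding this improved bound back into the ODE makes the integral $\int_{s_0}^\infty K e^{\Phi(s')}\,ds'$ convergent, and a second integration then gives $\Phi(s) + s \leq \ln(C\epsilon)$, which is precisely $e^u \leq C\epsilon$. For $\tau$ bounded away from $1$, the same conclusion is an immediate consequence of the hypothesis $(1-\tau)e^u \leq \epsilon$.

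The companion bound $|\nabla u| \leq C\epsilon$ is then obtained either by differentiating the equation and running a parallel ODE argument on $\sup_y |\nabla_y w|$ (exploiting the second inequality in \eqref{ine:pb} to control $\nabla(\partial_\tau u - \Delta u)$), or more simply by appealing to interior parabolic regularity once $e^u$ has been shown bounded and the right-hand side $e^u + |\nabla u|^2$ is therefore controlled. I expect the main technical obstacle to be making the envelope argument for $\Phi$ rigorous when the supremum is attained on the boundary of the rescaled domain, which grows like $\{|y| < 1/\sqrt{1-\tau}\}$; this can be handled either by choosing $\Omega(s)$ strictly inside and noting that the pointwise bound $w \leq \ln \epsilon$ on $\partial \Omega(s)$ is already compatible with the target decay, or by recasting the bootstrap as a parabolic rescaling around the fixed point $(\xi_0, \tau_0)$ at scale $\sqrt{1-\tau_0}$, confining the whole analysis to a fixed-size cylinder in rescaled variables and avoiding unbounded domains altogether.
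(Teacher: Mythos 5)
Your envelope argument is a genuinely different route from the paper, which instead localizes $e^u$ and $\nabla u$ with a smooth cutoff, writes a Duhamel representation against the heat semigroup, and runs a three-step Gronwall bootstrap (establishing $e^u\lesssim\epsilon(1-\tau)^{-1/2}$, then $\lesssim\epsilon(1-\tau)^{-\epsilon}$, then $\lesssim\epsilon$, each on a smaller ball). Unfortunately, the boundary problem you flag at the end is not a technical detail to be handled later: it is a genuine gap, and neither of your two proposed fixes closes it.

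Concretely, to say anything about $|\xi|\le 1/8$ for all $\tau\in[0,1)$, the set $\Omega(s)$ on which you take the supremum must contain $|y|\le e^{s/2}/8$, so $\partial\Omega(s)$ lies at distance comparable to $e^{s/2}$ from the origin. On $\partial\Omega(s)$ the only available information is the hypothesis $e^{w}\le\epsilon$, i.e.\ $w\le\ln\epsilon$; this does not decay in $s$ at all, whereas the target is $\Phi(s)+s\le\ln(C\epsilon)$, i.e.\ $w\le -s+\ln(C\epsilon)$, which is strictly below $\ln\epsilon$ as soon as $s>\ln C$. So your first fix (``the pointwise bound $w\le\ln\epsilon$ on $\partial\Omega(s)$ is already compatible with the target decay'') is simply false. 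Once the maximum of $w$ lands on the boundary, you lose the relations $\nabla_y w=0$, $\Delta_y w\le 0$ and the ODE $D^+\Phi\le -1+Ke^{-s}+Ke^{\Phi}$ has no justification; the integral-in-$s$ bootstrap that follows therefore has nothing to run on. Your second fix (rescale around $(\xi_0,\tau_0)$ at scale $\sqrt{1-\tau_0}$) is left as a sketch; if carried out it still produces a finite cylinder whose lateral boundary must be controlled, and controlling the influence of that boundary is exactly what the cutoff $+$ Duhamel $+$ Gronwall machinery in the paper accomplishes through the $1/\sqrt{\tau-s}$ smoothing of the heat kernel. That smoothing has no counterpart in a pointwise maximum-principle ODE, which is the structural reason the envelope argument cannot be patched by simply choosing $\Omega(s)$ more cleverly. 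Finally, for the gradient bound, your fallback to ``interior parabolic regularity once $e^u$ is bounded'' is not immediate either, since the right-hand side of the inequality still contains the unknown $|\nabla u|^2$, and $e^u\le C\epsilon$ gives only a one-sided bound on $u$; the paper has to run a separate Duhamel/Gronwall argument on the localized $\phi_r\nabla u$, using the second inequality in \eqref{ine:pb}, rather than derive it for free.
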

\begin{proof} The proof of this result uses ideas given in Giga and Kohn \cite{GKcpam89}, where \eqref{ine:pb} is considered without the gradient term and the nonlinear source term $e^u$ replaced by $|u|^{p-1}u$. The proof in \cite{GKcpam89} uses a truncation technique together with the smoothness effect of the heat semigroup $e^{\tau\Delta}$ and some type of Gronwall's argument. Although some advanced parabolic regularities are needed to treat our problem involving the nonlinear gradient term, but the same argument to those of \cite{GKcpam89} can be extended to our case without difficulties. Since the proof is long and technical, we give the proof in Appendix \ref{sec:noblowup}.
\end{proof}

Let us apply Proposition \ref{prop:noblowup} to $\Uc(x_0, \xi, \tau)$, where $\Uc(x_0, \xi, \tau)$ and $x_0$ are defined as in \eqref{def:Uc} and  \eqref{def:tx}. Recall from \eqref{eq:Ucxt} that $\Uc$ solves the following equations:
\begin{align*}
\partial_\tau \Uc &= \Delta_\xi \Uc + \alpha|\nabla_\xi \Uc|^2 + e^{\Uc},
\end{align*}
hence, \eqref{ine:pb} is satisfied. For the condition \eqref{con:ineUnU}, let us write from the definition  \eqref{def:Uc} of $\Uc(x_0, \xi, \tau)$, the definition \eqref{def:tx} of $x_0$ and part $(i)$ of Theorem \ref{theo:1},
\begin{align*}
&\sup_{|\xi| \leq 1, \tau \in [0,1)}(1 - \tau)e^{\Uc(x_0, \xi, \tau)}\\
&\quad  = \sup_{|x - x_0| \leq \sqrt{T - t(x_0)}, \tau \in [0,1)}(1 - \tau)^2(T - t(x_0))e^{U(x, t(x_0) + \tau(T-t(x_0)))}\\
& \qquad \leq \sup_{|x - x_0| \leq \frac{|x_0|}{2}}(T - t(x_0))e^{U(x, t)} \leq e^{\Phi_\alpha\left(\frac{|x_0|/2}{\sqrt{(T-t(x_0))|\log(T-t(x_0))|}}\right)} + \frac{C}{\sqrt{|\log(T-t(x_0))|}}\\
&\qquad \quad \leq e^{\Phi_{\alpha}(K_0/8)} + \frac{C}{\sqrt{|\ln (T-t(x_0))|}} \leq \epsilon_0(K_0, |x - x_0|) \to 0,
\end{align*} 
as $K_0 \to +\infty$ and $|x - x_0| \to 0$. Similarly, we have 
\begin{align*}
&\sup_{|\xi| \leq 1, \tau \in [0,1)}\sqrt{1 - \tau}|\nabla_\xi \Uc(x_0, \xi, \tau)|\\
&\quad \leq \sup_{x \in \RN, \tau \in [0,1)} \sqrt{T - t(x_0)}|\nabla_x U(x,t(x_0) + \tau(T - t(x_0)))|\\
&\qquad \leq \frac{C}{\sqrt{|\log(T -t(x_0))|}} \left(\|\zeta e^{\Phi_\alpha(\zeta)}\|_{L^\infty(\RN)} + 1 \right)\\
&\qquad \quad \leq \epsilon_0'(|x - x_0|) \to 0 \quad \text{as $|x - x_0| \to 0$}.
\end{align*}
which verifies the condition \eqref{con:ineUnU} for $\Uc(x_0, \xi, \tau)$ and $\nabla_\xi(x_0, \xi, \tau)$. Hence, we can apply Proposition \ref{prop:noblowup} to $\Uc(x_0, \xi, \tau)$ to deduce that $\xi = 0$ is not a blowup point of $e^{\Uc(x_0, \xi, \tau)}$ and $\nabla_\xi \Uc(x_0, \xi, \tau)$, which means that $x_0 \ne 0$ is not a blowup point of $e^U$ and $\nabla U$. Let us insist on the fact that our argument works for any $x_0 \ne 0$ without any smallness assumptions, thanks to the adapted definition of $t(x_0)$ given in \eqref{def:tx}. This proves the single point blowup result for $e^U$ and $\nabla U$, and concludes the proof of part $(ii)$ of Theorem \ref{theo:1}.\\

$(iii)$ We divide the proof into two steps. We first show the existence of the final profile $U^*_\alpha$, then we find an equivalent of $U^*_\alpha$ which concludes the proof of part $(iii)$ of Theorem \ref{theo:1}. We claim the following:
\begin{proposition}[\textbf{Existence of the final blowup profile}] \label{prop:estprofile} Let $U(t)$ be a solution of equation \eqref{equ:problem} which blows up in finite time $T$ at the origin and verifies the asymptotic behavior \eqref{equ:behTh1}. There exists a function $U^*_\alpha(x) \in \mathcal{C}^1(\RN \setminus \{0\})$ such that $U(x,t) \to U^*_\alpha(x)$ and $\nabla U(x,t) \to \nabla U^*_\alpha(x)$ as $t \to T$, uniformly on compact sets of $\RN \setminus \{0\}$.
\end{proposition}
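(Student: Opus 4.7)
My plan is to exploit the rescaled function $\Uc(x_0, \xi, \tau)$ introduced in \eqref{def:Uc} together with Proposition \ref{prop:noblowup} in order to show that, for each $x_0 \ne 0$, both $U(x_0, t)$ and $\nabla U(x_0, t)$ admit finite limits as $t \to T$; a parabolic bootstrap will then upgrade pointwise convergence to uniform convergence on compact subsets of $\RN \setminus \{0\}$ and yield the claimed $\mathcal{C}^1$ regularity of $U_\alpha^*$.

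First I would fix $x_0$ with $0 < |x_0| \le \epsilon_0$, introduce $t(x_0)$, $\theta(x_0)$ and $\Uc(x_0, \xi, \tau)$ as in \eqref{def:Uc}--\eqref{def:thetax}, and recall that $\Uc$ satisfies \eqref{eq:Ucxt}. Reproducing \emph{verbatim} the computation already carried out in the proof of part $(ii)$, one verifies the threshold hypothesis \eqref{con:ineUnU} for $\Uc$ on $\{|\xi|<1\}\times[0,1)$, so Proposition \ref{prop:noblowup} yields
$$ e^{\Uc(x_0, \xi, \tau)} + |\nabla_\xi \Uc(x_0, \xi, \tau)| \leq C\epsilon, \qquad |\xi| \leq \tfrac18, \; \tau \in [0,1). $$
With this $L^\infty$ control in hand, the right-hand side of \eqref{eq:Ucxt} is uniformly bounded on $\{|\xi| \leq 1/8\} \times [0,1)$. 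Standard interior parabolic $L^p$ and Schauder estimates applied to $\partial_\tau \Uc - \Delta_\xi \Uc = \alpha|\nabla_\xi \Uc|^2 + e^{\Uc}$ then produce uniform bounds on $\partial_\tau \Uc$, $\nabla_\xi \Uc$ and $\nabla^2_\xi \Uc$ on $\{|\xi| \leq 1/16\} \times [0,1)$. In particular $\tau \mapsto \Uc(x_0, \xi, \tau)$ is uniformly Lipschitz in $\tau$, so
$$\Uc^*(x_0, \xi) := \lim_{\tau \to 1^-} \Uc(x_0, \xi, \tau)$$
exists, uniformly in $\xi$ on $\{|\xi|\leq 1/16\}$; the same argument applies to $\nabla_\xi \Uc$.

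Translating back through \eqref{def:Uc} with $\tau = (t - t(x_0))/\theta(x_0) \to 1$ as $t \to T$, I would set
$$ U_\alpha^*(x_0) := \Uc^*(x_0, 0) - \ln \theta(x_0), \qquad \nabla U_\alpha^*(x_0) := \frac{\nabla_\xi \Uc^*(x_0, 0)}{\sqrt{\theta(x_0)}}. $$
For $|x_0| \geq \epsilon_0$ the argument is simpler: part $(ii)$ ensures that $e^U$ and $|\nabla U|$ remain bounded on a parabolic neighborhood of $(x_0, T)$, so classical regularity for \eqref{equ:problem} gives the limit directly, bypassing the rescaling. To glue these into uniform convergence on a given compact set $K \subset \RN \setminus \{0\}$, observe that the constants produced by Proposition \ref{prop:noblowup} and by the parabolic bootstrap depend only on $\inf_{x \in K} |x| > 0$ and $\sup_{x \in K} |x| < \infty$; an Arzel\`a--Ascoli argument then gives uniform convergence of both $U(\cdot, t)$ and $\nabla U(\cdot, t)$ on $K$, from which $U_\alpha^* \in \mathcal{C}^1(\RN \setminus \{0\})$ follows.

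The main delicate point is the parabolic bootstrap, where the $L^\infty$ bound on $e^{\Uc}$ and $|\nabla_\xi \Uc|$ produced by Proposition \ref{prop:noblowup} must be upgraded to a bound on $\partial_\tau \Uc$ that remains uniform up to $\tau = 1$. The quadratic gradient term $\alpha|\nabla_\xi \Uc|^2$ is the potential obstruction, but its Lipschitz constant is itself controlled by the already-established pointwise bound on $|\nabla_\xi \Uc|$, so the Schauder/$L^p$ theory applies on the half-open time interval $[0,1)$ without degeneration as $\tau \to 1^-$, and the convergence follows.
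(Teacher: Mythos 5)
Your proposal is correct and follows essentially the same strategy as the paper: both proofs invoke Proposition \ref{prop:noblowup} (verified in the proof of part $(ii)$) to obtain uniform $L^\infty$ control on $e^U$ and $\nabla U$ away from the origin, then apply interior parabolic regularity to bound the time derivatives and deduce the existence and $\mathcal{C}^1$ regularity of $U_\alpha^*$. The only cosmetic difference is that you carry out the bootstrap in the rescaled variables $\Uc(x_0,\xi,\tau)$ and then translate back, whereas the paper works directly with $U$ and $H=\nabla U$ in the original variables and defers the details to Merle's argument in \cite{Mercpam92}; the underlying mechanism is the same.
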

\begin{proof} The proof uses the same argument given by Merle \cite{Mercpam92} treated for equation \eqref{equ:sheGra} with $\alpha = 0$, which relies on some classical regularity argument of parabolic problem. In comparison with the work in \cite{Mercpam92}, the only difference is that we need to extend the \textit{no blowup under some threshold} result of Giga and Kohn \cite{GKcpam89} to our equation \eqref{equ:problem}, which is Proposition \ref{prop:noblowup}. Let us denote by $H = \nabla U$, we write from equation \eqref{equ:problem},
\begin{align*}
\partial_t U &= \Delta U + \alpha |\nabla U|^2 + e^U,\\
\partial_t H &= \Delta H + \alpha \nabla (|H|^2) +  He^U .
\end{align*}
From Proposition \ref{prop:noblowup}, we proved in part $(ii)$ that $e^U$ and $\nabla U$ are uniformly bounded on $\Omega \times [0, T)$ for any compact set $\Omega \subset \RN \setminus \{0\}$. By parabolic regularity techniques, similar to Proposition \ref{prop:noblowup}, we can show that $\partial_t U$ and $\partial_t H$ are also bounded on $\Omega' \times [T/2, T)$ for any $\Omega' \subset \RN \setminus \{0\}$. Therefore, as in \cite{Mercpam92} (see Proposition 2.2, page 269), we conclude that there exists $U^*_\alpha$ in $\mathcal{C}^1(\RN \setminus \{0\})$ such that $U(x, t) \to U^*_\alpha(x)$ and $\nabla U(x,t) \to \nabla U^*_\alpha(x)$ as $t \to T$, uniformly on each compact set of $\RN \setminus \{0\}$. This concludes the proof of Proposition \ref{prop:estprofile}.
\end{proof}

Let us now find an equivalence of $U^*_\alpha$ and $\nabla U^*_\alpha$ in order to complete the proof of part $(iii)$ of Theorem \ref{theo:1}. To this end, we use the same argument as in the proof of Proposition \ref{prop:estprofile} to show that the limit of $\Uc (x, 0, \tau)$ and $\nabla_\xi \Uc(x, 0, \tau)$ as $\tau \to 1$ exist for $x$ sufficiently small. Moreover, using part $(ii)$ of Definition \ref{def:St}, we see that 
$$\lim_{\tau \to 1} \Uc (x, 0, \tau) \sim \hat{\Uc }_{K_0}(1) = - \ln\left(\frac{K_0^2}{4 + 4\alpha}\right),$$
and 
$$\lim_{\tau \to 1}| \nabla_\xi \Uc(x, 0, \tau)| \leq \frac{C}{\sqrt{|\ln (T - t(x))|}},$$
for $|x|$ and $1/K_0$ sufficiently small. 

From Proposition \ref{prop:estprofile}, we have that $\lim_{t \to T}U(x, t) = U^*_\alpha(x)$ and $\lim_{t \to T} \nabla U(x, t) = \nabla U^*_\alpha(x)$. Hence, from the definitions \eqref{def:Uc} and \eqref{def:tx}, we derive 
\begin{align*}
U^*_\alpha(x) = \lim_{t \to T}U(x, t) &= \lim_{\tau \to 1} \big[-\ln (T - t(x)) + \Uc (x, 0, \tau)\big]\\
& \sim -\ln (T - t(x)) - \ln\left(\frac{K_0^2/16}{4 + 4\alpha}\right)\\
& \sim -\ln |x|^2 + \ln|\ln |x|^2| + \ln (K_0^2/16) - \ln\left(\frac{K_0^2/16}{4 + 4\alpha}\right)\\
& = \ln \left(\frac{(8 + 8\alpha)|\ln|x||}{|x|^2} \right)  \quad \text{as} \quad |x| \to 0,
\end{align*}
and 
\begin{align*}
|\nabla_x U^*_\alpha(x)| = |\lim_{t \to T} \nabla_x U(x, t)| &= \left|\lim_{\tau \to 1}\frac{\nabla_\xi \Uc(x, 0, \tau)}{\sqrt{T - t(x)}}\right|\\
&\leq \frac{C}{\sqrt{|\ln (T - t(x))|(T - t(x))}} \leq \frac{C}{|x|}  \quad \text{as} \quad |x| \to 0.
\end{align*}
This concludes the proof of Theorem \ref{theo:1} assuming that Propositions \ref{prop:PropInitialdata}, \ref{prop:redu} and \ref{prop:noblowup} hold. 
\end{proof}

\section{Reduction to a finite dimensional problem.}\label{sec:reductofn}
This section is the heart of our analysis.  We aim at proving Proposition \ref{prop:redu} which reduces the problem to a finite dimensional one. We proceed in two parts. In the first part, we derive \textit{a priori estimates} on $U(t)$ in $\Sc^*(t)$. In the second part, we show that these new bounds are better than those defined in $\Sc^*(t)$, except for the bounds on the components $Q_0(s)$ and $Q_1(s)$. This means that the problem is reduced to the control of a finite dimensional function $(Q_0,Q_1)(s)$ which is the conclusion $(i)$ of Proposition \ref{prop:redu}. The outgoing transversality property stated in part $(ii)$ of Proposition \ref{prop:redu} is a direct consequence of the dynamics of the modes $Q_0$ and $Q_1$. Let us start with the first part.

\subsection{A priori estimates.}\label{sec:apri}
We have the following estimates:

\begin{proposition}[\textbf{A priori estimate in $\Dc_1$}]\label{prop:D1} There exist $K_{0,2} > 0$ and $A_{0,2} > 0$ such that for all $K_0 \geq K_{0,2}$, $\epsilon_0 > 0$, $A \geq A_{0,2}$, $\lambda^* > 0$, $C_{0,2} > 0$,  there exists $t_{0,2}(K_0, \epsilon_0, A, \lambda^*, C_{0,2})$ with the following property: For all $\delta_0 \leq \frac{1}{2}|\hat \Uc(1)|$, $\alpha_0 > 0$, $C_0 > 0$ and $\eta_0 \leq \eta_{0,2}$ for some $\eta_{0,2}(\epsilon_0) > 0$, $\lambda \in [0, \lambda^*]$ and $t_0 \in [t_{0,2}, T)$, assume that 
\begin{itemize}
\item $U(x, t_0)$ is given by \eqref{def:U0d0d1} and $(d_0,d_1)$ is chosen such that $(\bar Q_0(s_0), \bar Q_1(s_0)) \in \hat \Vc_{A}(s_0)$ where $s_0 = \ln (T-t_0)$ and $\hat \Vc_A$ is defined in \eqref{def:VcA}.
\item for some $\sigma \geq s_0$, we have for all $t \in [T - e^{-\sigma}, T - e^{-(\sigma + \lambda)}]$, $$U(x,t) \in \Sc^*(t_0, K_0, \epsilon_0, \alpha_0, A, \delta_0, C_0, C_0', \eta_0, t).$$
\end{itemize}
Then, we have for all $s \in [\sigma, \sigma+\lambda]$,
\begin{itemize}
\item[(i)] (ODEs satisfied by the positive and null modes of $Q$) 
\begin{equation}\label{equ:odev0}
\left|Q_0'(s) - Q_0(s) \right| \leq \frac{C}{s^2},
\end{equation}
\begin{equation}\label{equ:odev01}
\forall i \in \{1, \cdots, N\}, \quad\left|Q_{1,i}'(s) - \frac{1}{2}Q_{1,i}(s) \right| \leq \frac{C}{s^2},
\end{equation}
and 
\begin{equation}\label{equ:odev2}
\forall i, j\in \{1, \cdots, N\}, \quad \left|Q_{2, ij}'(s) + \frac{2}{s}Q_{2,ij}(s) \right| \leq \frac{CA}{s^3}.
\end{equation}
\item[(ii)] (Control of the negative and outer part of $Q$)\\
- For $\sigma \geq s_0$:
\begin{equation}
\left|Q_-(y,s) \right| \leq C\Big(Ae^{-\frac{s- \sigma}{2}} +  A^2e^{-(s - \sigma)^2} + (s - \sigma)\Big)s^{-2}(1 + |y|^3),
\end{equation}
\begin{equation}
|Q_e(y,s)| \leq C\Big(A^2e^{-\frac{s - \sigma}{2}} + AK_0^3 e^{s - \sigma} + K_0^3(s - \sigma + 1)\Big)s^{-1/2}.
\end{equation}
- For $\sigma = s_0$:
\begin{equation}
|Q_-(y,s)| \leq C(1 + s - \sigma)s^{-2}(1 + |y|^3), \quad |Q_e(y,s)| \leq CK_0^3(1 + s - \sigma)e^{s-\sigma}s^{-1/2}.
\end{equation}
\item[(iii)] (Control of the gradient of $Q$)\\
- For $\sigma \geq s_0$:
\begin{equation}
|(\nabla Q)_\bot(y,s)| \leq C\Big(Ae^{-\frac{s- \sigma}{2}} +  C(K_0)C_0e^{-(s - \sigma)^2} + \sqrt{s  -\sigma}+ (s - \sigma)\Big)s^{-2}(1 + |y|^3).
\end{equation}
- For $\sigma = s_0$:
\begin{equation}
|(\nabla Q)_\bot(y,s)| \leq C(1 + \sqrt{s - \sigma} + s- \sigma)s^{-2}(1 + |y|^3).
\end{equation}
\end{itemize}
\end{proposition}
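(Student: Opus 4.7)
The plan is to work entirely with the Duhamel formulation \eqref{for:qint} for $Q$, using the decomposition \eqref{decomqQ}--\eqref{decomQ2}, and to derive estimates mode by mode. For part $(i)$, I will project equation \eqref{equ:Q} onto the eigenspaces of $\Ls$ associated with $h_0$, $h_1$ and the symmetric matrix $\Mc(y)$ from \eqref{def:M2}. The purely linear part $\Ls Q$ gives the eigenvalues $1$, $\tfrac12$, $0$, producing the leading terms $Q_0$, $\tfrac12 Q_{1,i}$, $0$ in the three ODEs. For the coefficient $-\tfrac{2}{s}$ in \eqref{equ:odev2}, the key point is that the potential $V(y,s) = 2(\psi_\alpha - 1)$ expands, near $z = y/\sqrt s \to 0$, as $V \approx -\tfrac{|y|^2}{(2+2\alpha)s} + O(s^{-1})$; projecting $VQ$ on $\Mc$ using the quadratic part $y^T Q_2 y$ of $Q_b$ and the orthogonality relations \eqref{equ:inthnhm} produces precisely $-\tfrac{2}{s}Q_{2,ij}$ (this is the standard computation in \cite{MZdm97}, up to a change of the coefficient due to the new $\alpha$-dependent profile). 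The remainders from $Q^2$, $G(Q)$, $R$ and from the cutoff error $(1-\chi)Q$ in $Q_b = \chi Q$ are then bounded using the $\Vc_{A,K_0}$-estimates on all components and $\|R\|_{L^\infty} \le C/s$; the gradient estimates in $\Vc_{A,K_0}$ and the lower bound $Q+\psi_\alpha \gtrsim 1/(1+|y|^2/s)$ on $\text{supp}(\chi)$ are what make $G(Q)$ manageable.

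For part $(ii)$, I will use the kernel $\mathcal K(s,\sigma)$ defined in \eqref{def:kernel}. Projecting \eqref{for:qint} onto $P_-$ and exploiting that eigenvalues of $\Ls$ on its range are $\leq -\tfrac12$ (so $\|\mathcal{K}(s,\sigma) P_-\| \le C e^{-(s-\sigma)/2}$ up to polynomial weights $(1+|y|^3)$), I obtain the stated $e^{-(s-\sigma)/2}$ decay of the initial contribution. The forcing integrals $\int_\sigma^s \mathcal{K}(s,\tau) P_-[Q^2 + G(Q) + R]\,d\tau$ yield the $(s-\sigma)s^{-2}(1+|y|^3)$ term, while the coupling with $Q_e$ through the cutoff boundary contributes the $A^2 e^{-(s-\sigma)^2}$ term (coming from $\chi_0$ being supported outside $|y|\le K_0\sqrt s$, giving super-exponential smallness in the $\rho$-weighted setting). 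For $Q_e$, I will multiply the equation satisfied by $(1-\chi)Q$ by an appropriate exponential weight and exploit the property $V(y,s) \le -1+\epsilon$ for $|y|\ge K_\epsilon\sqrt s$ stated after equation \eqref{def:kernel}, so that the semigroup on the outer region is exponentially decaying. This yields the factor $A^2 e^{-(s-\sigma)/2}$; the $AK_0^3 e^{s-\sigma}$ and $K_0^3(s-\sigma+1)$ terms arise respectively from the coupling to $Q_b$ through the cutoff commutator $[\Delta,\chi]Q$ and from the nonlinear gradient term $G(Q)$, which must here be estimated using the $\Dc_2$ and $\Dc_3$ bounds from Definition \ref{def:St} (this is the crucial point where the three-region setup is indispensable).

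For part $(iii)$, I will differentiate \eqref{equ:Q} with respect to $y$ and obtain an equation for $\nabla Q$ of the form $\partial_s(\nabla Q) = (\Ls - \tfrac12)(\nabla Q) + V\nabla Q + (\nabla V)Q + \nabla[Q^2 + G(Q) + R]$. The shift $-\tfrac12$ in the linear operator is responsible for the slightly faster decay on the modes of $(\nabla Q)_\bot$: projected onto modes $\ge 2$, the spectrum is $\le -\tfrac12 - 1 + 1 = -\tfrac12$, yielding the initial $Ae^{-(s-\sigma)/2}$ term. The contribution $\sqrt{s-\sigma}$ comes from integrating the $C_0/\sqrt{|\ln\theta|}$ estimate on $\nabla_\xi \Uc$ in $\Dc_2$ back through $R$ and $(\nabla V)Q$, and the $(s-\sigma)$ term from $\nabla[Q^2 + G(Q)]$ whose pointwise bound uses both the $Q$-estimates and the gradient estimates of $\Vc_{A,K_0}$. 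The term $C(K_0)C_0 e^{-(s-\sigma)^2}$ comes from the boundary coupling with the outer region, where the gradient is controlled only by the $C_0$-bound of Definition \ref{def:St}.

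The main obstacle is controlling the nonlinear gradient term $G(Q)$ defined in \eqref{def:GQ}: unlike $Q^2$, it is not naturally small in $L^\infty$ and, worse, contains $1/(Q+\psi_\alpha)$, which forces one to maintain a lower bound on $Q+\psi_\alpha$ throughout the blowup region. To handle this I will split $G(Q) = (\alpha-1)[2\nabla\psi_\alpha\cdot\nabla Q/\psi_\alpha + O(|\nabla Q|^2)]$ up to lower-order terms, absorb the first piece as a small perturbation of the potential (using $|\nabla\psi_\alpha/\psi_\alpha| \le C|y|/s$ on $\text{supp}(\chi)$), and estimate the quadratic remainder via the gradient bounds of $\Vc_{A,K_0}$ together with the $\Dc_2$-bound $|\nabla_\xi\Uc| \le C_0/\sqrt{|\ln\theta|}$ translated back to $y$-variables. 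The transversality is then an immediate consequence of \eqref{equ:odev0}--\eqref{equ:odev01}: if $|Q_0(s^*)| = A/(s^*)^2$, say, then $\text{sgn}(Q_0) Q_0' \ge Q_0/2 > 0$ at $s^*$ for $A$ large, forcing exit from $\hat\Vc_A$.
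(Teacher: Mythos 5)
Your overall strategy is the right one; the paper itself does not write the proof out, but defers entirely to Lemma~3.2 and Appendix~B of Merle--Zaag~\cite{MZnon97}, and what you describe (Duhamel formula with the kernel $\mathcal K(s,\sigma)$, mode-by-mode projection via $P_m$, semigroup decay for $P_-$ and for the outer part, and use of the $\Dc_2$, $\Dc_3$ bounds to tame $G(Q)$) is indeed a faithful reconstruction of that argument. Your recognition that the lower bound on $Q+\psi_\alpha$ on $\operatorname{supp}\chi$ is needed, and that the three-region control of the gradient is what makes $G(Q)$ tractable, matches the paper's stated reasons for the modified shrinking set.

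However, there is a genuine error in your derivation of the $-\tfrac{2}{s}$ coefficient in the ODE \eqref{equ:odev2}, and it propagates into your treatment of $G(Q)$. You attribute $-\tfrac{2}{s}$ entirely to the projection of $VQ$ on $\mathcal M$, claiming a coefficient that is ``$\alpha$-dependent due to the new profile.'' Both points are incorrect: the coefficient in \eqref{equ:odev2} is exactly $-\tfrac{2}{s}$, \emph{independently} of $\alpha$, and the projection of $VQ$ alone does not give it. Writing $V\approx \tfrac{N}{(1+\alpha)s}-\tfrac{|y|^2}{(2+2\alpha)s}$ and using the quadratic part $y^T Q_2 y - 2\operatorname{tr}(Q_2)$ of $Q_b$, the $\Mc$-projection of $VQ_b$ gives $\approx -\tfrac{4}{(1+\alpha)s}Q_{2,ij}$ (for instance, for $N=1$, the constant piece contributes $+\tfrac{1}{(1+\alpha)s}Q_2$ and the $|y|^2$ piece contributes $-\tfrac{5}{(1+\alpha)s}Q_2$). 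What closes the gap is precisely the linear drift in $G(Q)$: expanding $G$ around $Q=0$ yields a first-order term $(\alpha-1)\,2\tfrac{\nabla\psi_\alpha\cdot\nabla Q}{\psi_\alpha}\approx -\tfrac{\alpha-1}{(1+\alpha)s}\,y\cdot\nabla Q$, whose projection on $\Mc$ adds $-\tfrac{2(\alpha-1)}{(1+\alpha)s}Q_{2,ij}$. Only the sum
$$-\frac{4}{(1+\alpha)s}Q_{2,ij} - \frac{2(\alpha-1)}{(1+\alpha)s}Q_{2,ij} = -\frac{2}{s}Q_{2,ij}$$
recovers the $\alpha$-independent $-\tfrac{2}{s}$ of the statement. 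Your plan to ``absorb the first piece [$2\nabla\psi_\alpha\cdot\nabla Q/\psi_\alpha$] as a small perturbation of the potential'' would discard this leading-order contribution; on the null mode this drift term is of the same order $1/s$ as $V$, not subleading, so it must be retained explicitly in the projection. You must therefore keep the linearized drift of $G(Q)$ as part of the effective linear operator when deriving \eqref{equ:odev2}, and relegate only the genuinely quadratic remainder $O(|\nabla Q|^2/\psi_\alpha + |\nabla\psi_\alpha|^2 Q^2/\psi_\alpha^3 + \dots)$ to the error. The same care is needed when you estimate $Q_-$ and $(\nabla Q)_\bot$: the drift does not change those decay rates, but it does need to be tracked rather than declared negligible.
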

\begin{proof} The proof of this proposition is completely the same as in \cite{MZnon97} because our equation \eqref{equ:Q} and the shrinking set $\Vc_{A, K_0}$ defined in part $(i)$ of Definition \ref{def:St} are analogous as those defined in that paper. Note that the coefficient $(\alpha - 1)$ apearing in \eqref{def:GQ} and the nonlinear term $Q^2$ in equation \eqref{equ:Q} are replaced by some constants $a > 1$ and $Q^p$ with $p > 1$ in \cite{MZnon97}, and that these changes don't affect to their analysis. For this reason, we kindly refer the reader to Lemma 3.2 at page 1523 in \cite{MZnon97} for a similar statement and Appendix B for all details of its proof.
\end{proof}

We now turn to the \textit{a priori estimates} of $U$ in $\Dc_2$. We prove the following:
\begin{proposition}[\textbf{A priori estimate in $\Dc_2$}]\label{prop:D2}
There exists $K_{0,3} > 0$ such that for all $K_0 \geq K_{0,3}$, $\delta_1 \leq 1$, $\xi_0 \geq 1$ and $C_{0,1}^* > 0$, $C_{0,2}^*$, $C_{0,3}^*$, we have the following property: Assume that $\Uc$ is a solution of equation 
\begin{equation}\label{eq:UcD2}
\partial_\tau \Uc = \Delta \Uc + \alpha|\nabla \Uc|^2 + e^\Uc,
\end{equation}
for $\tau \in [\tau_1, \tau_2]$ with $0 \leq \tau_1 \leq \tau_2 \leq 1$. Assume in addition, for all $\tau \in [\tau_1, \tau_2]$,
\begin{itemize}
\item[(i)] for all $|\xi| \leq 2\xi_0$, $|\Uc(\xi, \tau_1) - \hat \Uc(\tau_1)| \leq \delta_1$ and $|\nabla \Uc(\xi, \tau_1)| \leq \frac{C_{0,1}^*}{\xi_0}$, where $\hat \Uc(\tau)$ is given by \eqref{def:solUc},
\item[(ii)] for all $|\xi| \leq \frac 74\xi_0$, $|\nabla \Uc(\xi, \tau)| \leq \frac{C_{0,2}^*}{\xi_0}$ and $|\nabla^2 \Uc(\xi, \tau)| \leq C_{0,3}^*$,
\item[(iii)] for all $|\xi|\leq  \frac 74\xi_0$, $\Uc(\xi, \tau) \leq \frac{1}{2}\hat \Uc(\tau)$.
\end{itemize}
Then, for $\xi_0 \geq \xi_{0,3}(C_0^*)$, there exists $\epsilon = \epsilon(K_0, C_{0,2}^*, C_{0,3}^*, \delta_1, \xi_0)$ such that for all $|\xi| \leq \xi_0$ and $\tau \in [\tau_1, \tau_2]$,
$$|\Uc(\xi, \tau) - \hat \Uc(\tau)| \leq \epsilon, \quad |\nabla \Uc(\xi, \tau)| \leq \frac{2C_{0,1}^*}{\xi_0},$$
where $\epsilon \to 0$ as $(\delta_1, \xi_0) \to (0, +\infty)$.
\end{proposition}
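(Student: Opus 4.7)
The plan is to set $V := \Uc - \hat\Uc$. Since $\hat\Uc(\tau)$ is spatially constant and satisfies $\partial_\tau\hat\Uc = e^{\hat\Uc}$, substituting into \eqref{eq:UcD2} gives
\[
\partial_\tau V = \Delta V + \alpha|\nabla V|^2 + e^{\hat\Uc(\tau)}\bigl(e^{V}-1\bigr)
\]
on $\{|\xi|\le 7\xi_0/4\}\times[\tau_1,\tau_2]$, with initial data $|V(\cdot,\tau_1)|\le\delta_1$ and $|\nabla V(\cdot,\tau_1)|\le C^*_{0,1}/\xi_0$ by hypothesis $(i)$. For $K_0\ge K_{0,3}$ large, $c:=K_0^2/[16(4+4\alpha)]>1$, so $\hat\Uc(\tau)=-\ln((1-\tau)+c)$ is uniformly bounded and $e^{\hat\Uc(\tau)}\le 1/c$ is small; together with hypothesis $(iii)$ and the bootstrap $|\nabla V|\le C^*_{0,2}/\xi_0$ from $(ii)$, this yields an a priori $L^\infty$ bound on $V$ on the whole cylinder, so $(e^V-1)/V$ is a bounded smooth function of $V$.

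For the $L^\infty$ bound on $V$, I would localize by a smooth cutoff $\chi$ equal to $1$ on $|\xi|\le\xi_0$, supported in $|\xi|\le 7\xi_0/4$, and satisfying $|\nabla^k\chi|\le C\xi_0^{-k}$. Then $W:=\chi V$ solves $\partial_\tau W = \Delta W + F$ with
\[
F = -2\nabla\chi\cdot\nabla V - V\Delta\chi + \chi\alpha|\nabla V|^2 + e^{\hat\Uc}\Psi(V)\,W,
\]
for a bounded function $\Psi$. The first three terms are pointwise $O(\xi_0^{-2})$ by the bootstrap, while the last is a linear perturbation with small coefficient. Writing $W$ via the Duhamel formula with the free heat kernel on $\RN$, taking $L^\infty$ norms and invoking Gronwall's lemma yields
\[
\|W(\tau)\|_\infty \le \bigl(\delta_1 + C\xi_0^{-2}\bigr)\exp\!\bigl(C(K_0,C^*_{0,2},C^*_{0,3})(\tau-\tau_1)\bigr),
\]
which, since $\tau-\tau_1\le 1$, is an $\epsilon(K_0,C^*_{0,2},C^*_{0,3},\delta_1,\xi_0)\to 0$ as $\delta_1\to 0$ and $\xi_0\to\infty$. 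Because $W=V$ on $|\xi|\le\xi_0$, this is exactly the first conclusion.

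For the gradient bound, working with $\nabla V$ directly is unsatisfactory because the natural Gronwall exponent would involve the bootstrap constant $C^*_{0,3}$ and hence fail to be small. Instead, I would set $h:=|\nabla V|^2$, which satisfies
\[
\partial_\tau h = \Delta h - 2|\nabla^2 V|^2 + 4\alpha\,(\nabla^2 V\nabla V)\cdot\nabla V + 2e^{\hat\Uc}e^{V}\,h.
\]
Young's inequality absorbs the $\alpha$-term into $2|\nabla^2 V|^2+2\alpha^2 h^2$, and the bootstrap $h\le (C^*_{0,2})^2/\xi_0^2$ converts $h^2$ into $O(\xi_0^{-2})h$, giving the pointwise differential inequality
\[
\partial_\tau h \le \Delta h + \bigl(O(\xi_0^{-2})+O(K_0^{-2})\bigr)\,h.
\]
Repeating the cutoff/Duhamel/Gronwall argument, now with a new cutoff $\tilde\chi\equiv 1$ on $|\xi|\le\xi_0$ supported in $|\xi|\le 5\xi_0/4$, one obtains an estimate of the form $\|h(\tau)\|_{L^\infty(|\xi|\le\xi_0)}\le (C^*_{0,1})^2\,\eta(K_0,\xi_0,C^*_{0,2},C^*_{0,3})/\xi_0^2$, where $\eta\to 1$ as $K_0,\xi_0\to\infty$. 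The main obstacle is precisely this sharpening: closing with the prescribed constant $2C^*_{0,1}/\xi_0$ rather than a generic multiple requires that the Gronwall exponential be close to $1$ (forcing $K_0\ge K_{0,3}$) and that the source coming from the cutoff commutator $[\Delta,\tilde\chi]h$ be absorbed into $(C^*_{0,1}/\xi_0)^2$ (forcing the explicit threshold $\xi_0\ge\xi_{0,3}(C^*_{0,1})$). Taking square roots then gives $|\nabla\Uc|\le 2C^*_{0,1}/\xi_0$ on $|\xi|\le\xi_0$.
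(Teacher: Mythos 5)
Your overall strategy is sound, and the two halves of your argument sit in an interesting relation to the paper's own proof, so a comparison is worth making.

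For the $L^\infty$ estimate, your cutoff--Duhamel--Gronwall argument on $V=\Uc-\hat\Uc$ is a genuinely different route: the paper instead averages $\Uc$ over small balls $B_2(\bar\xi_0)$, observes that the bootstrap $|\nabla\Uc|\le C^*_{0,2}/\xi_0$ makes $\Uc$ nearly constant on each such ball, and shows the average $\tilde\Uc(\tau)$ satisfies the ODE inequality $e^{\tilde\Uc}-C\epsilon-|\alpha|\epsilon^2\le\tilde\Uc'\le e^{\tilde\Uc}+C\epsilon+|\alpha|\epsilon^2$, which it compares directly to $\hat\Uc'=e^{\hat\Uc}$. Both arguments use the smallness of $|\nabla\Uc|$ as the driving force; yours feeds it into the commutator/nonlinearity of a linearized parabolic equation, theirs into an ODE comparison. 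Either closes, and neither has an edge in generality here.

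For the gradient estimate, you and the paper both pass to $h=|\nabla\Uc|^2$ and its parabolic inequality, localize, and conclude by the maximum principle or Duhamel. Your extra step of absorbing the term $4\alpha\,\nabla V\cdot(\nabla^2V\,\nabla V)$ into the good Bochner term $-2|\nabla^2V|^2$ via Young's inequality, leaving a coefficient $2\alpha^2 h\le 2\alpha^2(C^*_{0,2})^2\xi_0^{-2}$ in front of $h$, is actually a cleaner treatment than the paper's bound $4|\alpha|\,|\nabla^2\Uc|\,\theta\le C(C^*_{0,3})\theta$, since your resulting Gronwall exponent visibly tends to zero as $K_0,\xi_0\to\infty$, which is exactly what is needed to recover the sharp prefactor $2C^*_{0,1}$. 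However, your choice of cutoff $\tilde\chi\equiv1$ on $|\xi|\le\xi_0$, supported in $|\xi|\le\tfrac54\xi_0$, places the commutator terms $-2\nabla\tilde\chi\cdot\nabla h-(\Delta\tilde\chi)h$ in the annulus $\{\xi_0\le|\xi|\le\tfrac54\xi_0\}$, directly adjacent to the target ball $\{|\xi|\le\xi_0\}$, so the heat-kernel Gaussian separation that would suppress these terms by a factor $e^{-C'\xi_0^2}$ is unavailable near $|\xi|=\xi_0$. Without that, the Duhamel contribution of $-2\nabla\tilde\chi\cdot\nabla h$ (after estimating $|\nabla h|\le 2C^*_{0,2}C^*_{0,3}/\xi_0$) is only $O(C^*_{0,2}C^*_{0,3}\xi_0^{-2})$, i.e.\ the same order as the target $(C^*_{0,1}/\xi_0)^2$, and cannot be made small by choosing $\xi_0$ large alone. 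The paper avoids this by taking $\rho_1\equiv 1$ on $|\xi|\le\tfrac32\xi_0$, supported in $|\xi|\le\tfrac74\xi_0$, and then restricting the conclusion to $|\xi|\le\tfrac54\xi_0$, so the commutator region is a distance $\ge\xi_0/4$ from the target and the exponential decay kills every polynomial factor in $C^*_{0,2},C^*_{0,3}$. You should adopt that separation (or integrate the $\nabla\tilde\chi\cdot\nabla h$ term by parts inside Duhamel to trade $|\nabla h|$ for a $1/\sqrt{\tau-s}$ factor) in order to close with the prescribed constant.
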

\begin{proof} We first deal with the gradient estimate. We aim at proving that under the hypothesis of Proposition \ref{prop:D2}, we have 
\begin{equation}\label{est:nabUcD2}
\forall |\xi| \leq \frac{5}{4}\xi_0, \;\; \tau \in [\tau_1, \tau_2], \quad |\nabla \Uc(\xi, \tau)| \leq \frac{2C_{0,1}^*}{\xi_0},
\end{equation}
provided that $\xi_0 \geq \xi_{0,3}(C_0^*)$. To do so, let us denote by $\theta = |\nabla \Uc|^2$ and write from \eqref{eq:UcD2}, 
$$\partial_\tau \theta \leq \Delta \theta + C(C_{0,3}^*)\theta, \quad \forall |\xi| \leq \frac{7}{4}\xi_0, \; \tau \in [\tau_1, \tau_2],$$
where we used the fact that $2 \nabla \Uc \cdot  \nabla(\Delta\Uc) \leq \Delta \theta$ and $2\nabla \Uc \cdot \nabla (\alpha|\nabla \Uc|^2 + e^\Uc) = (4\alpha \Delta \Uc + 2e^\Uc)\theta \leq C(C_{0,3}^*)\theta$. 

Let us consider $\rho_1 \in \Cc^\infty(\RN)$ such that $\rho_1 \in [0,1]$, $\rho_1(\xi) = 1$ for $|\xi| \leq \frac{3}{2}\xi_0$ and $\rho_1(\xi)= 0$ for $|\xi| \geq \frac{7}{4}\xi_0$, $|\nabla \rho_1| \leq \frac{C}{\xi_0}$ and $|\Delta \rho_1| \leq \frac{C}{\xi_0^2}$. Then, $\theta_1 = \rho_1 \theta$ satisfies the inequality:
\begin{align*}
\partial_\tau \theta_1 &\leq \Delta \theta_1 - 2\nabla \rho_1 \cdot \nabla \theta - \Delta \rho_1 \theta + C(C_{0,3}^*)\theta_1\\
&\leq \Delta \theta_1 + C(C_{0,2}^*, C_{0,3}^*)\xi_0^{-2}\mathbf{1}_{\{\frac{3}{2}\xi_0 \leq |\xi| \leq 2\xi_0\}} + C(C_{0,3}^*)\theta_1.
\end{align*}
Let $\theta_2 = e^{-C(C_{0,3}^*)\tau}\theta_1$, we have 
$$\partial_\tau \theta_2 \leq \Delta \theta_2 + C(C_{0,2}^*, C_{0,3}^*)\xi_0^{-2}\mathbf{1}_{\{\frac{3}{2}\xi_0 \leq |\xi| \leq 2\xi_0\}} \quad \text{and} \quad 0 \leq \theta_2(\tau_1) \leq \frac{{C_{0,1}^*}^2}{\xi_0^2}.$$
Therefore, by the maximum principle, we deduce
$$\forall |\xi| \leq \frac{5}{4}\xi_0, \; \tau \in [\tau_1, \tau_2], \quad \theta(\xi,\tau) \leq \frac{{C_{0,1}^*}^2}{\xi_0^2} + C(C_{0,2}^*, C_{0,3}^*)^2\xi_0^{-2}e^{-C'\xi_0^2},$$
which  yields the conclusion \eqref{est:nabUcD2}.\\

We now turn to the estimate on $\Uc$. We use here the same argument as in \cite{MZnon97}, and consider $\Uc_1$ a solution to \eqref{eq:UcD2} such that for all $|\xi| \leq 2$ and $\tau \in [\tau_1, \tau_2]$, $$|\Uc_1(\xi, \tau_1) - \hat \Uc(\tau_1)| \leq \delta_1, \quad |\nabla \Uc_1(\xi, \tau)| \leq \epsilon.$$
Let us show that for $|\xi| \leq 2$ and $\tau \in [\tau_1, \tau_2]$, 
$$|\Uc_1(\xi, \tau) - \hat \Uc(\tau)| \leq C(K_0)\epsilon + \delta_1.$$
We write for all $\tau \in [\tau_1, \tau_2]$,
\begin{align*}
\Uc_1(0,\tau) &= \frac{1}{|B_2(0)|}\int_{|\xi| \leq 2}\Uc_1(\xi, \tau)d\xi + \Uc_2(\tau),\\
e^{\Uc_1(0, \tau)} &= \frac{1}{|B_2(0)|}\int_{|\xi| \leq 2}e^{\Uc_1(\xi, \tau)}d\xi + \Uc_3(\tau),
\end{align*}
where $|B_2(0)|$ is the volume of the sphere of radius 2 in $\RN$, $\|\Uc_2\|_{L^\infty} \leq 2\epsilon$ and $\|\Uc_3\|_{L^\infty} \leq C\epsilon.$

For $\epsilon$ small enough, if we consider in the distribution sense, 
$$\tilde \Uc(\tau) = \frac{1}{|B_2(0)|}\int_{|\xi| \leq 2} \Uc_1(\xi, \tau)d\xi,$$
then, we have from \eqref{eq:UcD2}, 
$$e^{\tilde \Uc} - C\epsilon - |\alpha| \epsilon^2 \leq \frac{d \tilde{\Uc}}{d\tau} \leq e^{\tilde \Uc} + C\epsilon + |\alpha| \epsilon^2,$$
and 
$$|\tilde{\Uc}(\tau_1) - \hat \Uc(\tau_1)| \leq C\epsilon + \delta_1.$$
From \eqref{eq:UcD2}, we obtain by classical \textit{a priori} estimates that for all $\tau \in [\tau_1, \tau_2]$, $|\tilde{\Uc}(\tau) - \hat \Uc(\tau)| \leq C(K_0)\epsilon + \delta_1$ (since $C_1(K_0) \leq |\hat \Uc(\tau)| \leq C_1'(K_0)$). Therefore, for $|\xi| \leq 2$ and $\tau \in [\tau_1, \tau_2]$, we have 
$|\Uc_1(\xi, \tau) - \hat \Uc(\tau)| \leq C(K_0)\epsilon + \delta_1$. Applying this result to $\Uc_1(\xi, \tau) = \Uc(\xi - \bar \xi_0, \tau)$ with $|\bar \xi_0| \in [-\xi_0 + 2, \xi_0 + 2]$ and using the hypothesis, we obtain
$$\forall |\xi| \leq \xi_0, \; \tau \in [\tau_1, \tau_2], \quad |\Uc(\xi, \tau) - \hat \Uc(\tau)| \leq \epsilon,$$
where $\epsilon = \epsilon(\delta_1, \xi_0) \to 0$ as $(\delta_1, \xi_0) \to (0, +\infty)$. This concludes the proof of Proposition \ref{prop:D2}.
\end{proof}

For the \textit{a priori estimates} of $U$ in $\Dc_3$, we claim the following:
\begin{proposition}[\textbf{A priori estimate in $\Dc_3$}]\label{prop:D3}
For all $\epsilon > 0$, $\epsilon_0 > 0$, $\sigma_0 > 0$, there exists $t_{0,4}(\epsilon, \epsilon_0, \sigma_0) < T$ such that for all $t_0 \in [t_{0,4}, T)$, if $U$ is a solution of \eqref{equ:problem} on $[t_0,t_*]$ for some $t_* \in [t_0, T)$ satisfying 
\begin{itemize}
\item[(i)] for all $|x| \in \left[\frac{\epsilon_0}6, \frac{\epsilon_0}{4}\right]$ and $t \in [t_0, t_*]$,
\begin{equation}
i = 0, 1, 2, \quad |\nabla^i U(x,t)| \leq \sigma_0,
\end{equation}
\item[(ii)] For $|x| \geq \frac{\epsilon_0}{6}$, $U(x,t_0) = \hat U^*(x)$ where $\hat U^*$ is defined in \eqref{def:Uhatstar},
\end{itemize}
then for all $|x| \in \left[\frac{\epsilon_0}{4}, +\infty\right)$ and $t \in [t_0, t_*]$, 
\begin{equation}
|U(x,t) - U(x,t_0)| + |\nabla U(x,t) - \nabla U(x,t_0)| \leq \epsilon.
\end{equation}
\end{proposition}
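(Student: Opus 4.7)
The plan is to prove Proposition \ref{prop:D3} through a bootstrap (continuation) argument combined with standard heat semigroup estimates, exploiting the fact that on $\Dc_3$ the initial profile $\hat U^{*}$ is smooth with $e^{\hat U^{*}}$, $|\nabla \hat U^{*}|$ and $|\Delta \hat U^{*}|$ all bounded by a constant depending only on $\epsilon_{0}$ and $a$. Consequently on the short interval $[t_{0}, t_{*}] \subset [t_{0}, T)$, equation \eqref{equ:problem} is a tame perturbation of the heat equation in the exterior region. Concretely, I would set $V(x, t) = U(x, t) - \hat U^{*}(x)$ on $\{|x| \geq \epsilon_{0}/6\} \times [t_{0}, t_{*}]$. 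By hypothesis $(ii)$, $V(\cdot, t_{0}) \equiv 0$, and $V$ solves $\partial_{t} V = \Delta V + \Delta \hat U^{*} + \alpha|\nabla V + \nabla \hat U^{*}|^{2} + e^{V + \hat U^{*}}$. To globalise, I would introduce a cutoff $\chi \in \mathcal{C}^{\infty}(\RN)$ with $\chi \equiv 0$ for $|x| \leq \epsilon_{0}/5$ and $\chi \equiv 1$ for $|x| \geq 2\epsilon_{0}/9$, so that $\mathrm{supp}(\nabla\chi)$ lies strictly inside the annulus $\{\epsilon_{0}/6 \leq |x| \leq \epsilon_{0}/4\}$ where hypothesis $(i)$ provides uniform bounds on $U$ and its first two derivatives. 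Setting $W = \chi V$ then gives $W \equiv V$ on $\Dc_{3}$, $W(\cdot, t_{0}) \equiv 0$, and
$$\partial_{t} W - \Delta W = \chi \bigl[\Delta \hat U^{*} + \alpha|\nabla V + \nabla \hat U^{*}|^{2} + e^{V + \hat U^{*}}\bigr] - 2\nabla\chi \cdot \nabla V - V \Delta\chi =: F.$$

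For the bootstrap, I would fix $\epsilon_{1} \in (0,1]$ and let $T^{*} \in [t_{0}, t_{*}]$ be the maximal time such that $|V| + |\nabla V| \leq \epsilon_{1}$ on $\Dc_{3} \times [t_{0}, T^{*}]$; continuity together with $V(\cdot, t_{0}) \equiv 0$ forces $T^{*} > t_{0}$. Under this a priori bound, together with hypothesis $(i)$ on $\mathrm{supp}(\nabla\chi)$ and with the boundedness of the $\hat U^{*}$-derivatives and of $e^{\hat U^{*}}$ on $\{|x| \geq \epsilon_{0}/6\}$, the source satisfies $\|F\|_{L^{\infty}(\RN \times [t_{0}, T^{*}])} \leq C = C(\sigma_{0}, \epsilon_{0}, a)$. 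Then Duhamel's formula $W(t) = \int_{t_{0}}^{t} e^{(t - s)\Delta} F(\cdot, s)\, ds$, combined with the standard heat-semigroup estimates $\|e^{\tau\Delta}\|_{L^{\infty} \to L^{\infty}} \leq 1$ and $\|\nabla e^{\tau\Delta}\|_{L^{\infty} \to L^{\infty}} \leq C\tau^{-1/2}$, yields $\|W(t)\|_{L^{\infty}(\RN)} \leq C(T - t_{0})$ and $\|\nabla W(t)\|_{L^{\infty}(\RN)} \leq 2C\sqrt{T - t_{0}}$ for every $t \in [t_{0}, T^{*}]$. Restricting to $\Dc_{3}$, where $W = V$, and choosing $t_{0,4}$ close enough to $T$ so that $2C\sqrt{T - t_{0,4}} < \min(\epsilon_{1}/2, \epsilon/2)$, a standard continuity argument forces $T^{*} = t_{*}$ and delivers the desired bound $|V| + |\nabla V| \leq \epsilon$ on $\Dc_{3} \times [t_{0}, t_{*}]$.

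The main obstacle I anticipate is controlling the nonlinear gradient contribution $\alpha|\nabla V + \nabla \hat U^{*}|^{2}$ in $F$ without relying on any more regularity than what is available: the bootstrap hypothesis on $|\nabla V|$ is exactly what linearises this quadratic term, while the $\tau^{-1/2}$ parabolic smoothing in the Duhamel estimate for $\nabla W$ is precisely sharp enough to produce the $\sqrt{T - t_{0}}$ gain that ultimately closes the bootstrap. The localisation step also requires some care, since the spurious cutoff terms $\nabla\chi \cdot \nabla V$ and $V\Delta\chi$ must be absorbed into $\|F\|_{L^{\infty}}$; this is precisely why hypothesis $(i)$ is stated on an annulus strictly containing $\mathrm{supp}(\nabla\chi)$, rather than merely at the interface $|x| = \epsilon_{0}/4$.
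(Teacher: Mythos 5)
Your proof is correct and follows essentially the same route as the paper: localize to the exterior region with a smooth cutoff, bound the resulting source term in $L^\infty$ using hypothesis $(i)$ on the transition annulus together with the a priori bound on $\Dc_3$, apply Duhamel and the smoothing estimates for $e^{\tau\Delta}$ and $\nabla e^{\tau\Delta}$, and close via a continuity argument. The paper phrases the continuity step as a first-touching-time contradiction (defining $t_\epsilon$ where equality first holds) rather than a bootstrap, but these are the same mechanism.

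The one genuine difference is your choice of reference profile. The paper subtracts $\psi(x) = -\ln(1+a|x|^2)$ and then must handle the nonzero initial data of the localized quantity via the extra term $\|S(t-t_0)\rho_2 U_1(t_0) - \rho_2 U_1(t_0)\|_{L^\infty}$, exploiting that $\rho_2 U_1(t_0) = \rho_2(\hat U^* - \psi)$ is compactly supported and Lipschitz with constants depending only on $\epsilon_0$. You instead subtract $\hat U^*$ itself, so $V(\cdot,t_0) \equiv 0$ on $\{|x|\ge\epsilon_0/6\}$ by hypothesis $(ii)$, and $W(\cdot,t_0)\equiv 0$, eliminating that extra term entirely. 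Since $\hat U^*$ and $\psi$ coincide for $|x|\ge 1$ and both are smooth with bounded $e^{\hat U^*}$, $\nabla\hat U^*$, $\Delta\hat U^*$ on the exterior region, your variant incurs no cost and is marginally cleaner. A small expository remark: when bounding the source you invoke hypothesis $(i)$ ``on $\mathrm{supp}(\nabla\chi)$,'' but you actually need it on the slightly larger set $\{\epsilon_0/5 \le |x| \le \epsilon_0/4\}$, namely wherever $\chi\ne 0$ but the bootstrap on $\Dc_3$ does not yet apply; this is still contained in the annulus $[\epsilon_0/6,\epsilon_0/4]$ covered by $(i)$, so nothing breaks, but the phrasing undersells what is being used.
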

\begin{proof} We only deal with the estimate on $U$ for $|x| \geq \frac{\epsilon_0}{4}$ because the estimate on $\nabla U$ can be obtained similarly. We argue by contradiction. Let us consider $t_\epsilon \in (t_0, t_*)$ such that for all $t \in [t_0, t_\epsilon)$, 
\begin{align}
&\|U(x,t) - U(x,t_0)\|_{L^\infty(|x| \geq \frac{\epsilon_0}{4})} \leq \epsilon, \quad \|\nabla U(x,t) - \nabla U(x,t_0)\|_{L^\infty(|x| \geq \frac{\epsilon_0}{4})} \leq \epsilon,\label{eq:asupD3}\\
&\|U(x,t_\epsilon) - U(x,t_0)\|_{L^\infty(|x| \geq \frac{\epsilon_0}{4})} = \epsilon \quad \|\nabla U(x,t_\epsilon) - \nabla U(x,t_0)\|_{L^\infty(|x| \geq \frac{\epsilon_0}{4})} = \epsilon.\label{eq:asupD31}
\end{align}
We remark from \eqref{def:Uhatstar} that 
$$e^{U(x,t_0)} = e^{\hat U^*(x)} \leq C(\epsilon_0), \quad  |\nabla U(x, t_0)|^2 = |\nabla \hat U^*(x)|^2 \leq C(\epsilon_0), \quad \forall |x| \geq \frac{\epsilon_0}{6}.$$
Hence, from \eqref{eq:asupD3}, we have 
$$|F(U, \nabla U)| = |e^{U(x,t)} + \alpha |\nabla U(x,t)|^2| \leq C(\epsilon_0), \quad \forall |x| \geq \frac{\epsilon_0}{6}, \; t \in [t_0, t_\epsilon).$$
Consider $U = \psi + U_1$, where $\psi(x) = -\ln(1 + a|x|^2)$ is introduced in \eqref{def:Ha1a2}, we write from \eqref{equ:problem},
$$\partial_t U_1 = \Delta U_1 + F(U, \nabla U) + \Delta \psi. $$
From assumption $(i)$, we have in fact for all $t \in [t_0, t_\epsilon]$ and $|x| \in [\frac{\epsilon_0}{6}, \frac{\epsilon_0}{4}]$, $|F(U, \nabla U)| \leq C(\sigma_0)$. We then consider $U_2(x,t) = \rho_2(x)U_1(x,t)$, where $\rho_2(x) \in \Cc^\infty(\RN)$, $\rho_2(x) = 1$ for $|x| \geq \frac{\epsilon_0}{5}$, $\rho_2(x) = 0$ for $|x| \leq \frac{\epsilon_0}{6}$, $|\nabla \rho_2| \leq \frac{C}{\epsilon_0}$ and $|\Delta \rho_2| \leq \frac{C}{\epsilon_0^2}$. We then have 
$$\partial_t U_2 = \Delta U_2 - 2\nabla \rho_2 \cdot \nabla U_1 - \Delta \rho_2 U_1 + \rho_2 F(U,\nabla U) + \rho_2\Delta \psi.$$
Note from the definitions of $\rho_2$ and $\psi$ and assumption $(i)$ that for all $t \in [t_0, t_*]$, 
$$|2\nabla \rho_2 \cdot \nabla U_1| + |\Delta \rho_2 U_1| \leq C(\epsilon_0, \sigma_0)\mathbf{1}_{\{\frac{\epsilon_0}{6}\leq |x| \leq \frac{\epsilon_0}{5}\}}.$$
We write 
$$\partial_t U_2 = \Delta U_2 + \tilde{f}_1(x,t) + \rho_2  F(U,\nabla U) + \rho_2 \Delta \psi,$$
with $|\tilde{f}_1(x,t)| \leq C(\epsilon_0, \sigma_0)\mathbf{1}_{\{\frac{\epsilon_0}{6}\leq |x| \leq \frac{\epsilon_0}{5}\}}.$

We denote by $S(\cdot)$ the linear heat flow, we write for all $ t \in [t_0, t_\epsilon)$, 
$$U_2(t) - S(t - t_0)U_2(t_0) = \int_{t_0}^t S(t - s)\left[\tilde{f}_1 + \rho_2 F(U,\nabla U) + \rho_2 \Delta \psi\right] ds.$$
Hence,
\begin{align*}
\|U_2(t) - U_2(t_0)\|_{L^\infty(\RN)} &\leq \|U_2(t) - S(t - t_0)U_2(t_0)\|_{L^\infty(\RN)}\\
&\quad  + \|S(t - t_0)U_2(t_0) - U_2(t_0)\|_{L^\infty(\RN)}\\
&\leq \int_{t_0}^t \left\|S(t - s)\left[\tilde{f}_1 + \rho_2 F(U,\nabla U) + \rho_2\Delta \psi \right]\right\|_{L^\infty(\RN)}ds\\
&\quad + \|S(t - t_0)\rho_2U_1(t_0) - \rho_2U_1(t_0)\|_{L^\infty(\RN)}\\
&\leq C(\epsilon, \sigma_0)(t-t_0) + \|S(t - t_0)\rho_2U_1(t_0) - \rho_2U_1(t_0)\|_{L^\infty(\RN)}.
\end{align*}
From assumption $(ii)$ and the definition \eqref{def:Uhatstar} of $\hat U^*$, we see that 
$$\|\rho_2U_1(t_0)\|_{L^\infty(\RN)} = \|\rho_2(\hat U^* - \psi)\|_{L^\infty(\RN)} \leq C(\epsilon_0)\mathbf{1}_{\{\frac{\epsilon_0}{6} \leq |x| \leq 1\}}.$$
Hence, if $t_0 \in [t_{0,4}(\epsilon, \epsilon_0, \sigma_0), T)$, we obtain 
$$\|U_2(t_\epsilon) - U_2(t_0)\|_{L^\infty(\RN)} \leq \frac{\epsilon}{2}.$$
This follows $\|U(t_\epsilon) - U(t_0)\|_{L^\infty(|x| \geq \frac{\epsilon_0}{6})} \leq \frac{\epsilon}{2}$, which is a contradiction to \eqref{eq:asupD31}. This concludes the proof of Proposition \ref{prop:D3}.
\end{proof}

\subsection{Finite dimensional reduction.}\label{sec:con}
In this subsection, we give the proof of Proposition \ref{prop:redu}, which follows from the \textit{a priori estimates} obtained in Propositions \ref{prop:D1}, \ref{prop:D2} and \ref{prop:D3}. We proceed in two steps: we first show that we can fix $K_0, \delta_0$ and $C_0$ independently from $A$, take $A$ sufficiently large and choose $\epsilon_0, \alpha_0$ and $\eta_0$ in terms of $A$, so that all the bounds given in Definition \ref{def:St} are improved, except for the modes $Q_0$ and $Q_1$. This immediately gives the conclusion of part $(i)$ of Proposition \ref{prop:redu}, which reduces the problem to a finite dimensional one. In second step, we use the dynamics on the modes $Q_0$ and $Q_1$ given in Proposition \ref{prop:D1} to prove part $(ii)$ of Proposition \ref{prop:redu}. 
\subsubsection{Improved controls of $U(t)$ in $\Sc^*(t)$ and conclusion of part $(i)$ of Proposition \ref{prop:redu}.}
We aim at proving that for a suitable choice of the parameters $t_0, K_0, \epsilon_0, \alpha_0$, $A, \delta_0, C_0, C_0'$ and $\eta_0$, the bounds given in Definition \ref{def:St} can be improved. In particular, we want to prove that under the assumption of Proposition \ref{prop:redu}, the following estimates hold:\\
\noindent - \textit{(Improved controls in $\Dc_1$)} For $s_* = -\ln (T - t_*)$,
\begin{align}
i,j \in \{1, \cdots, N\}, \;|Q_{2,i,j}(s_*)| < \frac{A^2 \ln s_*}{s_*^2}, \quad |Q_-(y, s_*)| \leq \frac{A}{2s_*^2}(1 + |y|^3), \label{est:imD1}\\
|Q_e(y,s_*)| \leq \frac{A^2}{2\sqrt{s_*}}, \quad |(\nabla Q)_\bot(y,s_*)| \leq \frac{A}{2s_*^2}(1 + |y|^3).\label{est:impD12}
\end{align}
\noindent - \textit{(Improved controls in $\Dc_2$)} For all $|x| \in \left[\frac{K_0}{4}\sqrt{(T-t_*)|\ln(T-t_*)|}, \epsilon_0 \right]$ and $|\xi| \leq \alpha_0 \sqrt{|\ln \theta(x)|}$, where $\theta(x) = T - t(x)$ and $t(x)$ is defined by \eqref{def:tx},
\begin{align}\label{est:impD2}
|\Uc(x, \xi, \tau_*) - \hat \Uc(\tau_*)| \leq \frac{\delta_0}{2},\quad |\nabla_\xi \Uc(x, \xi, \tau_*)| \leq \frac{C_{0}}{2\sqrt{|\ln \theta(x)|}}, \quad |\nabla^2_\xi \Uc(x,\xi, \tau_*)| \leq \frac{C_{0}'}{2},
\end{align}
where $\tau_* = \frac{t_* - t(x)}{\theta(x)}$.\\
\noindent - \textit{(Improved controls in $\Dc_3$)} For all $|x| \geq \frac{\epsilon_0}{4}$,
\begin{align}\label{est:impD3}
|U(x,t_*) - U(x,t_0)| \leq \frac{\eta_0}{2}, \quad |\nabla U(x,t_*) - \nabla U(x,t_0)| \leq \frac{\eta_0}{2}.
\end{align}
One can see that once these improved estimates are proven, part $(ii)$ of Proposition \ref{prop:redu} immediately follows. Let us start with the estimates in $\Dc_1$. 

- \textit{Proof of \eqref{est:imD1} and \eqref{est:impD12}:} For the estimate on $Q_{2,i,j}$, we argue by contradiction. We assume that  
$$\forall s \in [s_0, s_*), \;\;|Q_{2,i,j}(s)| < \frac{A^2 \ln s}{s^2}, \quad |Q_{2,i,j}(s_*)| = \frac{A^2\ln s_*}{s_*^2}.$$
Let us consider the case $Q_{2,i,j}(s_*) > 0$ (the case $Q_{2,i,j}(s_*) < 0$ is similar), we have
$$Q'_{2,i,j}(s_*) \geq \left.\frac{d}{ds}\left(\frac{A^2\ln s}{s^2}\right)\right|_{s =s_*} = \frac{A^2}{s_*^3} - \frac{2A^2\ln s^*}{s_*^3},$$
on the one hand. On the other hand, we have by \eqref{equ:odev2}, 
$$Q'_{2, i,j}(s_*) \leq \frac{CA}{s_*^3} - \frac{2A^2\ln s_*}{s_*^3},$$
and a contradiction follows if $A \geq C+1$. This proves \eqref{est:imD1} for $Q_{2,i,j}$.

For the improved controls of $Q_-, Q_e$ and $(\nabla Q)_\bot$, we distinguish in two cases:

\noindent - Case 1: $s_* - s_0 \leq \lambda_1$ where $\lambda_1 = \lambda_1(A) > 0$ is fixed later. We apply Proposition \ref{prop:D1} with $\lambda^* = \lambda_1$, $\lambda = s_* - s_0$ and $\sigma = s_0$ to obtain
\begin{align*}
&|Q_-(y,s_*)| \leq C(1 + \lambda_1)s_*^{-2}(1 + |y|^3),\\
&|Q_e(y,s_*)| \leq CK_0^3e^{\lambda_1}(1 + \lambda_1)s_*^{-1/2},\\
&|(\nabla Q)_\bot(y, s_*)| \leq C(1 + \lambda_1)s_*^{-2}(1 + |y|^3).
\end{align*}
To have \eqref{est:imD1} and \eqref{est:impD12}, we need $C(1 + \lambda_1) \leq A/2$ and $CK_0^3(1 + \lambda_1)e^{\lambda_1} \leq A^2/2$, which is possible with $\lambda_1 = \frac{3}{2}\ln A$ for $A$ large enough. 

\noindent - Case 2: $s_* - s_0 \geq \lambda_2$ where $0 < \lambda_2=\lambda_2(K_0, A) \leq \lambda_1$. We apply Proposition \ref{prop:D1} with $\lambda^* = \lambda = \lambda_2$, $\sigma = s_* - \lambda_2$ to obtain
\begin{align*}
&|Q_-(y,s_*)| \leq C(Ae^{-\lambda_2/2} + A^2e^{-\lambda_2^2} + \lambda_2)s_*^{-2}(1 + |y|^3),\\
&|Q_e(y,s_*)| \leq C(A^2e^{-\lambda_2/2} +  AK_0^3\lambda_2e^{\lambda_2})s_*^{-1/2},\\
&|(\nabla Q)_\bot(y, s_*)| \leq C(Ae^{-\lambda_2/2} + C(K_0)C_0e^{-\lambda_2^2} + \sqrt{\lambda_2} + \lambda_2)s_*^{-2}(1 + |y|^3).
\end{align*}
We now fix $\lambda_2$ so that 
$$C(Ae^{-\lambda_2/2} + A^2e^{-\lambda_2^2} + \lambda_2) \leq \frac{A}{2},\quad  C(A^2e^{-\lambda_2/2} +  AK_0^3\lambda_2e^{\lambda_2}) \leq \frac{A^2}{2},$$
and 
$$C(Ae^{-\lambda_2/2} + C(K_0)C_0e^{-\lambda_2^2} + \sqrt{\lambda_2} + \lambda_2) \leq \frac{A}{2},$$
which is possible with $\lambda_2 = \ln(A/(8CK_0^3))$ and $C_0 \leq A^3$, then the conclusion follows for $A$ large enough. This concludes the proof of \eqref{est:imD1} and \eqref{est:impD12}.

- \textit{Proof of \eqref{est:impD2}:} We aim at using Proposition \ref{prop:D2} to prove \eqref{est:impD2}. For this we need to check the hypothesis of Proposition \ref{prop:D3} under the hypothesis of Proposition \ref{prop:redu}. This is done thanks to the following result:
\begin{lemma} \label{lemm:D2} Under the hypothesis of Proposition \ref{prop:redu}, we have for 
$$|x| \in \left[\frac{K_0}{4}\sqrt{(T-t_*)|\ln(T-t_*)|}, \epsilon_0 \right],$$
\begin{itemize}
\item[(i)] For all $|\xi| \leq \frac{7}{4}\alpha_0 \sqrt{|\ln \theta(x)|}$ and  $\tau \in \left[\max\left\{0, \frac{t_0 - t(x)}{\theta(x)}\right\}, \frac{t_* - t(x)}{\theta(x)}\right]$,
\begin{align*}
|\nabla_\xi \Uc(x, \xi, \tau)| \leq \frac{2C_0}{\sqrt{|\ln \theta(x)|}}, \quad |\nabla_\xi^2 \Uc(x, \xi, \tau)| \leq 2C_0', \quad \Uc(x, \xi, \tau) \geq \frac{1}{2}\hat \Uc(\tau).
\end{align*}
\item[(ii)] For all $|\xi| \leq 2\alpha_0 \sqrt{|\ln \theta(x)|}$ and  $\tau = \max\left\{0, \frac{t_0 - t(x)}{\theta(x)}\right\}$,
$$\forall \delta_1 \leq 1, \; |\Uc(x, \xi, \tau) - \hat \Uc(\tau)| \leq \delta_1, \quad |\nabla_\xi \Uc(x, \xi, \tau)| \leq \frac{C_0}{4\sqrt{|\ln \theta(x)|}}.$$
\end{itemize}
\end{lemma}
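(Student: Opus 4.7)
The plan is to translate the controls encoded in $\Sc^*(t)$ into the form required by Proposition \ref{prop:D2}, paying attention to the fact that the $\Uc$-bounds in part (ii) of Definition \ref{def:St} hold only on the range $|\xi| \leq \alpha_0\sqrt{|\ln\theta(x)|}$, whereas Proposition \ref{prop:D2} needs them on the enlarged range $|\xi| \leq \frac{7}{4}\alpha_0\sqrt{|\ln\theta(x)|}$. The extension is performed by tracking the physical point $y = x + \xi\sqrt{\theta(x)}$ at time $t = t(x) + \tau\theta(x)$ and, for each such $(y,t)$, selecting the most convenient description among the three regions $\Dc_1(t), \Dc_2(t), \Dc_3(t)$.

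For part (ii), I would split according to the sign of $t_0 - t(x)$. When $t(x) \geq t_0$ so that $\tau_1 = 0$, a direct computation shows $\Uc(x, \xi, 0) = W(y_x + \xi, s_x)$ with $s_x = -\ln\theta(x)$ and $|y_x| = \frac{K_0}{4}\sqrt{s_x}$. Provided $\alpha_0$ is small compared with $K_0$, the shifted argument $y_x + \xi$ sits inside $\{|y| \leq K_0\sqrt{s_x}\}$ for $|\xi| \leq 2\alpha_0\sqrt{s_x}$, so the $W^{1,\infty}$ bound $\|Q(s_x)\|_{W^{1,\infty}} \leq C/\sqrt{s_x}$ delivered by Proposition \ref{prop:propSt} yields $W(y_x + \xi, s_x) = \Phi_\alpha\big((y_x + \xi)/\sqrt{s_x}\big) + o(1)$ as $s_x \to \infty$. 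Taylor-expanding the smooth profile $\Phi_\alpha$ around $K_0\omega_x/4$ with $\omega_x = x/|x|$ gives $\Phi_\alpha = \hat\Uc(0) + O(\alpha_0)$, so choosing $\alpha_0$ small and $s_x$ large (i.e.\ $t_0$ close to $T$) delivers both bounds. When $t(x) < t_0$, the value $\Uc(x, \xi, \tau_1)$ is a rescaled translation of the initial datum restricted to $\Dc_2(t_0)$, and part (ii) of Proposition \ref{prop:PropInitialdata} gives the required bounds directly.

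For part (i), fix $(\xi, \tau)$ in the stated enlarged range and classify the physical point $y$ at time $t$. If $y \in \Dc_1(t)$, Proposition \ref{prop:propSt} gives $|\nabla U(y, t)| \leq C/\sqrt{\theta(x)|\ln\theta(x)|}$, and multiplying by $\sqrt{\theta(x)}$ yields $|\nabla_\xi \Uc(x, \xi, \tau)| \leq C/\sqrt{|\ln\theta(x)|}$, which is below $2C_0/\sqrt{|\ln\theta(x)|}$ for a suitable choice of $C_0$. If $y \in \Dc_2(t)$, I reparametrize with base point $\tilde x = y$, observe that $\theta(\tilde x)/\theta(x)$ and $|\ln\theta(\tilde x)|/|\ln\theta(x)|$ stay close to $1$ (because $|y|$ and $|x|$ are of the same order once $\alpha_0$ is small), and apply part (ii) of Definition \ref{def:St} at $(\tilde x, 0, (t - t(\tilde x))/\theta(\tilde x))$. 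If $y \in \Dc_3(t)$, which can only happen near $|x| = \epsilon_0$, part (iii) of Definition \ref{def:St} combined with the $\Cc^1$-regularity of $\hat U^*$ yields a uniform bound. The Hessian estimate then follows from these gradient bounds plus interior parabolic regularity applied to \eqref{eq:Ucxt}. The lower bound $\Uc \geq \tfrac{1}{2}\hat\Uc$ is obtained by extending the estimate $|\Uc - \hat\Uc| \leq \delta_0$ (valid from part (ii) of Definition \ref{def:St} on $|\xi| \leq \alpha_0\sqrt{|\ln\theta(x)|}$) to the enlarged range using the gradient bound just established, together with the running assumption $\delta_0 \leq \tfrac{1}{2}|\hat\Uc(1)|$.

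The main obstacle I anticipate is the uniform bookkeeping of the base-point changes: the conversion factors $\sqrt{\theta(\tilde x)/\theta(x)}$ and $\sqrt{|\ln\theta(\tilde x)|/|\ln\theta(x)|}$ between $\nabla_\xi \Uc(\tilde x, \cdot, \cdot)$ and $\nabla_\xi \Uc(x, \cdot, \cdot)$ must be kept arbitrarily close to $1$ uniformly in $\xi$ of order $\sqrt{|\ln\theta(x)|}$, which is precisely what forces the extra factor $2$ in $2C_0$ and the smallness of $\alpha_0$ relative to $K_0$. The analogous control when $y$ transitions between $\Dc_1(t), \Dc_2(t), \Dc_3(t)$ at times $t < t_*$ earlier than $t_*$ is also delicate, since the nature of the available bound changes abruptly across the region boundaries and must be matched quantitatively.
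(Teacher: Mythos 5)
The general architecture you propose---decomposing according to the region in which the physical point $y = x + \xi\sqrt{\theta(x)}$ sits at time $t = t(x) + \tau\theta(x)$, converting between base points, and distinguishing $t(x) \geq t_0$ from $t(x) < t_0$ in part (ii)---is the right idea and is consistent with the strategy of Merle--Zaag \cite{MZnon97}, Lemma~2.6, to which the paper delegates the proof. However, two points need repair.

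First, the bound $\Uc \geq \tfrac{1}{2}\hat\Uc$: your closing justification (extend $|\Uc - \hat\Uc| \leq \delta_0$ and use $\delta_0 \leq \tfrac12|\hat\Uc(1)|$) actually yields the \emph{opposite} inequality. Since $\hat\Uc(\tau) < 0$ and $|\hat\Uc|$ is decreasing in $\tau$, the chain $\Uc \leq \hat\Uc + \delta_0 \leq \hat\Uc + \tfrac12|\hat\Uc(\tau)| = \tfrac12\hat\Uc(\tau)$ gives $\Uc \leq \tfrac12\hat\Uc$, while $\Uc \geq \hat\Uc - \delta_0$ is strictly smaller than $\tfrac12\hat\Uc$ and cannot give a lower bound by $\tfrac12\hat\Uc$. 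The inequality in the lemma as written appears to be a misprint for the $\leq$ version, which is exactly hypothesis~(iii) of Proposition~\ref{prop:D2} that this lemma is meant to verify; your argument, run carefully, produces that $\leq$, but you present it without noticing it contradicts the statement as quoted. You should flag this rather than silently reproduce it.

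Second, in the $y \in \Dc_1(t)$ branch of part~(i), you cite Proposition~\ref{prop:propSt} as giving $|\nabla U(y,t)| \leq C/\sqrt{\theta(x)|\ln\theta(x)|}$. What Proposition~\ref{prop:propSt} actually controls, via the similarity variables, is $\sqrt{T-t}\,|\nabla U(y,t)| \lesssim 1/\sqrt{|\ln(T-t)|}$, so after multiplying by $\sqrt{\theta(x)}$ the remaining factor is $\sqrt{\theta(x)/(T-t)}\cdot|\ln(T-t)|^{-1/2}$, not $|\ln\theta(x)|^{-1/2}$. To close the argument you need the intermediate observation that $y \in \Dc_1(t)$ together with $|y| \approx |x| = \tfrac{K_0}{4}\sqrt{\theta(x)|\ln\theta(x)|}$ forces $(T-t)|\ln(T-t)| \gtrsim \theta(x)|\ln\theta(x)|$, whence $1-\tau = (T-t)/\theta(x)$ is bounded away from $0$ and $|\ln(T-t)| \sim |\ln\theta(x)|$; only then is the ratio controlled. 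Relatedly, the constants coming out of Proposition~\ref{prop:propSt} carry factors of $A$, so you should track whether the resulting constant in the $\tau_1 = 0$ case of part~(ii) is compatible with $C_0$ being fixed before $A$ in the paper's parameter hierarchy (cf.\ the opening of Section~\ref{sec:con}); this is precisely the kind of bookkeeping you acknowledge as delicate but do not resolve.

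Subject to these corrections, the base-point reparametrization with conversion factors $\sqrt{\theta(\tilde x)/\theta(x)} = |\tilde x|/|x|$ staying $O(\alpha_0/K_0)$-close to $1$, and the matching across the $\Dc_1/\Dc_2/\Dc_3$ boundaries, are the right mechanisms and align with what the referenced proof does.
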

\begin{proof} Since the proof of Lemma \ref{lemm:D2} is very similar to the one given in \cite{MZnon97} and no new ideas are needed, we refer the interested reader to see Lemma 2.6 at page 1515 in that paper for all details of the proof.
\end{proof}
From Lemma \ref{lemm:D2}, we apply Proposition \ref{prop:D2} with $C_{0,1}^* = \frac{C_0}{4}$, $C_{0,2}^* = 2C_0$, $C_{0,3}^* = 2C_0'$, $\xi_0 = \alpha_0\sqrt{|\ln \theta(\epsilon_0)|}$ with the choice $\alpha_0 \leq 1$ to derive for $|\xi| \leq \alpha_0\sqrt{|\ln \theta(x)|}$, 
$$|\Uc(x, \xi, \tau_*) - \hat \Uc(\tau_*)| \leq \frac{\delta_0}{2}, \quad |\nabla_\xi \Uc(x, \xi, \tau_*)| \leq \frac{C_0}{2\sqrt{|\ln \theta(x)|}}.$$
By a direct parabolic estimate, we see that there exists $t_{0,6}(A) < T$ such that for all $t_0 \in [t_{0,6}, T)$, if 
$$U(t_0) \in \Sc^*(t_0, K_0, \epsilon_0, \alpha_0, A, \delta_0, C_0, C_1, \eta_0, t_0)$$
and $$U(t) \in \Sc^*(t_0, K_0, \epsilon_0, \alpha_0, A, \delta_0, C_0, C_0', \eta_0, t) \quad \forall t \in [t_0, t'],$$
then 
$$U(t') \in \Sc^*(t_0, K_0, \epsilon_0, \alpha_0, A, \delta_0, C_0, 3/4, \eta_0, t').$$
With the choice of $C_0' \geq 2$ and from Proposition \ref{prop:PropInitialdata}, we have $|\nabla_\xi^2\Uc(x, \xi, \tau_*)| \leq \frac{3}{4} \leq \frac{C_0'}{2}$. This concludes the proof of \eqref{est:impD2}, assuming that Lemma \ref{lemm:D2} holds.

- \textit{Proof of \eqref{est:impD3}:} We aim at using Proposition \ref{prop:D3} to improve the estimates in $\Dc_3$. Let us check the hypothesis of Proposition \ref{prop:D3}. From $(ii)$ of Definition \eqref{def:St}, we have for all $t \in [t_0, t_*]$ and $|x| \in [\frac{\epsilon_0}{6}, \frac{\epsilon_0}{4}]$,
\begin{equation}\label{est:Uc012}
|\Uc(x, 0, \tau) - \hat \Uc(\tau)| \leq \delta_0, \quad |\nabla_\xi\Uc(x, 0, \tau)| \leq \frac{C_0}{\sqrt{|\ln \theta(x)|}}, \quad |\nabla^2_\xi \Uc(x, 0, \tau)| \leq C_0',
\end{equation}
where $\tau = \frac{t - t(x)}{\theta(x)}$. 
From the definition \eqref{def:Uc} of $\Uc(x, \xi, \tau)$ and \eqref{est:Uc012}, we obtain $|\nabla^i U(x,t)| \leq \sigma_0(K_0, \epsilon_0, C_0, C_0')$. From \eqref{def:U0d0d1}, we have $U(x,t_0) = \hat U^*(x)$ for $|x| \geq \frac{\epsilon_0}{6}$. Hence, Proposition \ref{prop:D3} applies with $\epsilon = \eta_0/2$ and we obtain for all $t \in [t_0, t_*]$ and $|x| \geq \frac{\epsilon_0}{4}$,
$$|U(x,t) - U(x,t_0)| + |\nabla U(x,t) - \nabla U(x,t_0)| \leq \frac{\eta_0}{2},$$
which concludes the proof of \eqref{est:impD3}. This also completes the proof of part $(i)$ of Proposition \ref{prop:redu}.
\subsubsection{Outgoing transversality of $U(t)$ on $\partial \Sc^*(t)$.}
We give here the proof of part $(ii)$ of Proposition \ref{prop:redu}. The proof simply follows from part $(i)$ of Proposition \ref{prop:redu} and the ODEs \eqref{equ:odev0} and \eqref{equ:odev01}. Indeed, from part $(i)$, we know that for $\omega = \pm 1$, $Q_{0}(s_*) = \frac{\omega A}{s_*^2}$ and $Q_{1,i}(s_*) = \frac{\omega A}{s_*^2}$ for $i \in \{1, \cdots, N\}$. From \eqref{equ:odev0} and \eqref{equ:odev01}, we see that 
\begin{align*}
&\omega Q_0'(s_*) \geq \omega Q_0(s_*) - \frac{C}{s_*^2} \geq \frac{A - C}{s_*^2},\\
\forall i \in \{1, \cdots, N\},\;\; &\omega Q_{1,i}'(s_*) \geq \frac{1}{2}\omega Q_{1,i}(s_*) - \frac{C}{s_*^2} \geq \frac{A/2 - C}{s_*^2}.
\end{align*}
Taking $A$ large enough gives $\omega Q'_{0}(s_*) > 0$ and $\omega Q'_{1,i}(s_*) > 0$, which means that $Q_0$ and $Q_{1,i}$ are traversal outgoing to the bounding curve $s \mapsto \omega As^{-2}$ at $s = s_*$. This concludes the proof of part $(ii)$ and completes the proof of Proposition \ref{prop:redu}.

\appendix
\renewcommand*{\thesection}{\Alph{section}}

\section{Properties of the set $\Vc_{A, K_0}$ and the initial data \eqref{def:U0d0d1}.}\label{sec:preinti}
In this appendix, we give some properties of the shrinking set $\Vc_{A, K_0}$ defined in part $(i)$ of Definition \ref{def:St} as well as the proof of Proposition \ref{prop:PropInitialdata}. Let us start with the following proposition:
\begin{proposition}[Estimates on $Q$ and $\nabla Q$ in $\Vc_{A, K_0}$]\label{prop:propSt} For all $K_0 \geq 1$ and $\epsilon_0 > 0$, there exist $t_{0,2}(K_0, \epsilon_0)$ and $\eta_{0,2}(\epsilon_0) > 0$ such that for all $t_0 \in [t_{0,2}, T)$, $A \geq 1$, $\alpha_0 > 0$, $C_0 > 0$, $\delta_0 \leq \frac{1}{2}|\hat \Uc(1)|$ and $\eta_0 \in (0, \eta_{0,2}]$, we have the following properties: Assume that $U(x,t_0)$ is given by \eqref{def:U0d0d1} and that for some $t \in [t_0,T)$,
$$U(t) \in \Sc^*(t_0, K_0, \epsilon_0, \alpha_0, A, \delta_0, \eta_0, C_0, C_0', t),$$
then, there exists a positive constant $C = C(K_0, C_0)$ such that for all $y \in \RN$ and $s = -\log(T-t)$,\\
$(i)$ (Estimates on $Q$)
$$|Q(y,s)| \leq \frac{CA^2}{\sqrt s}, \quad |Q(y,s)| \leq \frac{CA^2\ln s}{s^2}(|y|^2 + 1) + \frac{CA^2}{s^2}(|y|^3 + 1).$$
$(ii)$ (Estimates on $\nabla Q$)
\begin{align*}
|\nabla Q(y,s)| \leq \frac{CA^2}{\sqrt s}, \quad |\nabla Q(y,s)| \leq \frac{CA^2\ln s}{s^2}(|y|^3 + 1),\quad |(1 - \chi(y,s))\nabla Q(y,s)| \leq \frac{C}{\sqrt s}.
\end{align*}
\end{proposition}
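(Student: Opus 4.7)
The plan is to derive both estimates by combining the modal decomposition \eqref{decomqQ} of $Q_b$ with the direct bound $\|Q_e\|_\infty\le A^2/\sqrt s$ from Definition \ref{def:St}$(i)$, invoking the $\Dc_2$ and $\Dc_3$ estimates only for the outer gradient bound.

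\textbf{Step 1 (bounds on $Q$).} Write $Q=Q_b+Q_e$ as in \eqref{def:qbqe} and expand $Q_b$ via \eqref{decomqQ}. The five bounds of Definition \ref{def:St}$(i)$ give pointwise
\begin{equation*}
|Q_b(y,s)|\le \frac{A}{s^2}(1+|y|)+\frac{CA^2\ln s}{s^2}(|y|^2+1)+\frac{A}{s^2}(|y|^3+1).
\end{equation*}
Since $\mathrm{supp}(Q_e)\subset\{|y|\ge K_0\sqrt s\}$, the direct bound $\|Q_e\|_\infty\le A^2/\sqrt s$ satisfies $|Q_e|\le \frac{CA^2}{s^2}|y|^3$ on its support, so the polynomial bound in $(i)$ follows. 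For the $L^\infty$ bound on $Q$, insert $|y|\le 2K_0\sqrt s$ on $\mathrm{supp}(Q_b)$ in the previous display: the leading term is $\frac{A}{s^2}\cdot (2K_0\sqrt s)^3\lesssim \frac{AK_0^3}{\sqrt s}$, so $\|Q_b\|_\infty\le \frac{CA^2}{\sqrt s}$, and combined with $Q_e$ we get $\|Q\|_\infty\le CA^2/\sqrt s$.

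\textbf{Step 2 (interior bound on $\nabla Q$).} Differentiating the modal expansion gives $\nabla Q_b=Q_1+2Q_2 y+\nabla Q_-$, and since $2Q_2 y+\nabla Q_-$ lies in the orthogonal complement of the span of the zeroth mode, it is captured by the $(\nabla Q)_\bot$ bound. By Definition \ref{def:St}$(i)$,
\begin{equation*}
|\nabla Q_b(y,s)|\le \frac{A}{s^2}+\frac{CA^2\ln s}{s^2}|y|+\frac{A}{s^2}(|y|^3+1),
\end{equation*}
which is bounded by $\frac{CA^2\ln s}{s^2}(|y|^3+1)$. Substituting $|y|\le 2K_0\sqrt s$ inside $\mathrm{supp}(Q_b)$ yields $\|\nabla Q_b\|_\infty\le CAK_0^3/\sqrt s\le CA^2/\sqrt s$. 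This already handles the first two bounds of $(ii)$ on $\mathrm{supp}(Q_b)$.

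\textbf{Step 3 (outer bound on $\nabla Q$).} The shrinking set does not directly control $\nabla Q$ in $\{|y|\ge K_0\sqrt s\}$, so I use the $\Dc_2$ and $\Dc_3$ information. From $Q=e^W-\psi_\alpha$ and $W(y,s)=U(x,t)+\ln(T-t)$ with $y=x/\sqrt{T-t}$,
\begin{equation*}
\nabla_y Q(y,s)=(T-t)^{3/2}e^{U(x,t)}\,\nabla_x U(x,t)-\nabla_y\psi_\alpha(y,s),
\end{equation*}
and by Step 1, $e^W=Q+\psi_\alpha$ is uniformly bounded. In the outer region $|y|\ge K_0\sqrt s$, equivalently $|x|\ge K_0\sqrt{(T-t)|\ln(T-t)|}$, the point $x$ lies in $\Dc_2\cup\Dc_3$. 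In $\Dc_2$, the identity $\nabla_\xi\Uc(x,0,\tau)=\sqrt{T-t(x)}\,\nabla_xU(x,t)$ and the bound $|\nabla_\xi\Uc|\le C_0/\sqrt{|\ln\theta(x)|}$ imply $\sqrt{T-t}\,|\nabla_xU|\le C_0/\sqrt{s}$, using $T-t\le T-t(x)$ and $|\ln\theta(x)|\asymp s$ on the relevant subregion. In $\Dc_3$, the $\eta_0$-closeness to $\hat U^*$ in \eqref{def:Uhatstar} combined with $|\nabla\hat U^*|\le C$ gives $\sqrt{T-t}\,|\nabla_xU|\le C\sqrt{T-t}\le C/\sqrt s$. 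Adding $|\nabla_y\psi_\alpha|=\mathcal O(1/\sqrt s)$ (direct differentiation in \eqref{def:Qpsi}) delivers the third bound of $(ii)$; combined with Step 2 this closes the global $L^\infty$ and polynomial bounds on $\nabla Q$.

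The main obstacle is Step 3. The bounds in Definition \ref{def:St}$(ii)$--$(iii)$ live in the intrinsic parabolic coordinates $(\xi,\tau)$ rescaled by $t(x)$ defined in \eqref{def:tx}, while the target estimate for $\nabla Q$ is in the self-similar variables $(y,s)$. One must verify that in the outer region $|y|\ge K_0\sqrt s$ the adapted time $T-t(x)$ stays comparable to the current $T-t$ (hence $|\ln\theta(x)|\asymp s$), and check that the resulting constant in the outer bound depends only on $K_0$ and $C_0$, not on $A$, so that a genuine $1/\sqrt s$ estimate (and not $A^2/\sqrt s$) is preserved.
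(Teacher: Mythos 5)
Your Steps 1 and 2 reproduce the paper's argument for the modal estimates on $Q$ and on $\chi\nabla Q$, and they are correct (modulo the small slip of attributing the $2Q_2 y$ term to $(\nabla Q)_\bot$ rather than to the $Q_2$ bound, which does not affect the final estimate). The genuine gap lies in Step 3, and it is twofold.

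\textbf{(a) $A$-independence of the outer gradient bound.} The statement requires $C=C(K_0,C_0)$ in $|(1-\chi)\nabla Q|\le C/\sqrt s$, and in the statement $t_{0,2}=t_{0,2}(K_0,\epsilon_0)$ is \emph{not} allowed to depend on $A$. You bound $e^W=Q+\psi_\alpha$ by invoking Step 1, but in the outer region Step 1 only gives $|Q|=|Q_e|\le A^2/\sqrt s$, so your bound on $e^W$ is $A^2/\sqrt s + C$. Multiplied by $\sqrt{T-t}|\nabla_x U|\le C_0/\sqrt s$ this produces an extra $A^2 C_0/s$ term; to absorb it into $C(K_0,C_0)/\sqrt s$ you would need $s\ge CA^4$, i.e.\ $t_{0,2}$ depending on $A$, which is not permitted. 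The paper avoids this entirely: in $\Dc_2$ the bound $|\Uc(x,0,\tau)-\hat\Uc(\tau)|\le\delta_0\le\tfrac12|\hat\Uc(1)|$ gives $\Uc\le\tfrac12\hat\Uc(\tau)$ and hence $e^{\Uc}\le C(K_0)$, so $(T-t)e^{U}=\frac{T-t}{\theta(x)}e^{\Uc}\le e^{\Uc}\le C(K_0)$; in $\Dc_3$, $|U-U(\cdot,t_0)|\le\eta_0$ and $U(\cdot,t_0)=\hat U^*$ give an $A$-independent bound on $e^{U}$. You must use these $\Dc_2$/$\Dc_3$ inputs for $e^W$, not the $\Dc_1$ modal bound.

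\textbf{(b) The comparability claim is false.} You assert that in the outer region $T-t(x)$ is comparable to $T-t$ and hence $|\ln\theta(x)|\asymp s$. This fails once $|x|$ is of order $\epsilon_0$: there $\theta(x)$ is a fixed positive constant (so $|\ln\theta(x)|$ is bounded) while $T-t\to 0$ and $s\to\infty$. The conclusion $\sqrt{T-t}\,|\nabla_x U|\le C_0/(4\sqrt s)$ is nonetheless true, but the correct mechanism is the algebraic identity from \eqref{def:tx}: $\theta(x)|\ln\theta(x)|=\frac{16}{K_0^2}|x|^2$, which is increasing in $|x|$, so that for $|x|\ge r(t)=K_0\sqrt{(T-t)|\ln(T-t)|}$ one has $\theta(x)|\ln\theta(x)|\ge 16(T-t)|\ln(T-t)|$, and therefore
\begin{equation*}
\sqrt{T-t}\,|\nabla_x U|\le\frac{\sqrt{T-t}}{\sqrt{\theta(x)}}\cdot\frac{C_0}{\sqrt{|\ln\theta(x)|}}=\frac{C_0\sqrt{T-t}}{\sqrt{\theta(x)|\ln\theta(x)|}}\le\frac{C_0}{4\sqrt{|\ln(T-t)|}}.
\end{equation*}
It is $\theta|\ln\theta|$ that is comparable, not $\theta$ and $|\ln\theta|$ separately. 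For $|x|\ge\epsilon_0$ (where even this fails) one switches to the $\Dc_3$ bound, as the paper does, using that $\nabla\hat U^*$ is bounded on $\{|x|\ge\epsilon_0\}$ and that $\sqrt{(T-t)|\ln(T-t)|}\to 0$.

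You flag both points as ``to verify'' at the end, but they are not routine: point (a) requires replacing an input you already committed to, and the claim in (b) is simply not true as stated.
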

\begin{proof} $(i)$ From part $(i)$ of Definition \ref{def:St}, we have $Q(s) \in \Vc_{A, K_0}(s)$. From the decomposition \eqref{decomqQ}, we have 
\begin{align*}
|Q(y,s)| &\leq \sum_{m = 0}^2|Q_m(s)||h_m(y)| + |Q_-(y,s)| + |Q_e(y,s)|\\
&\leq \frac{CA^2 \ln s}{s^2}(|y^2| + 1) + \frac{A}{s^2}(|y|^3 + 1) + \frac{\|Q_e(s)\|_{L^\infty}(\RN)}{|y|^3 + 1}(|y|^3 + 1)\mathbf{1}_{\{|y| \geq K_0 \sqrt s\}}\\
& \leq \frac{CA^2 \ln s}{s^2}(|y|^2 + 1) + \frac{CA^2}{s^2}(|y|^3 + 1). 
\end{align*}
We also have
$$|Q_b(y,s)| \leq \left\{\sum_{m = 0}^2|Q_m(s)||h_m(y)| + |Q_-(y,s)|\right\}\mathbf{1}_{\{|y| \leq 2K_0 \sqrt s\}} \leq \frac{CA^2 \ln s}{\sqrt s},$$
hence,
\begin{align*}
|Q(y,s)| \leq \|Q_b(s)\|_{L^\infty(\RN)} + \|Q_e(s)\|_{L^\infty(\RN)} \leq \frac{CA^2}{\sqrt{s}},
\end{align*}
which concludes the proof of part $(i)$.\\

$(ii)$ Arguing similarly as for $(i)$, we obtain from part $(i)$ of Definition \ref{def:St} and \eqref{decomQ2}, 
$$|\chi(y,s) \nabla Q(y,s)| \leq \frac{CA^2 \ln s}{s^2}(|y|^3 + 1), \quad |\chi(y,s)\nabla Q(y,s)| \leq \frac{CA^2}{\sqrt{s}}.$$
Note from \eqref{def:Qpsi} that $|\nabla \psi_\alpha(y,s)| \leq \frac{C}{\sqrt s}$ and $\frac{C}{\sqrt s} \leq \frac{C}{s^2}(|y|^3 + 1)$ for $|y| \geq K_0 \sqrt s$ and $K_0 \geq 1$. We have to prove that 
$$|(1 - \chi(y,s)) \nabla (Q + \psi_\alpha)(y,s)| = |(1 - \chi(y,s))\nabla W(y,s)e^{W(y,s)}| \leq \frac{C}{\sqrt s}$$
in order to conclude the proof.
From \eqref{def:changeVar}, this reduces to show that for all $t \geq t_0$ and $|x| \geq r(t) = K_0\sqrt{(T-t)|\log(T-t)|}$, 
\begin{equation}\label{est:nabUeU}
|\nabla_x U(x,t) e^{U(x,t)}| \leq \frac{C}{(T-t)^{\frac{3}{2}}\sqrt {|\ln (T-t)|}}.
\end{equation}
To prove \eqref{est:nabUeU}, we argue as in \cite{MZnon97} by considering two cases: \\
- \textit{Case 1: $|x| \in [r(t), \epsilon_0]$.} In this case, we use the bounds given in part $(ii)$ of Definition \ref{def:St} to prove \eqref{est:nabUeU}. From \eqref{def:Uc}, we have 
$$U(x,t) = -\ln \theta(x) + \Uc(x, 0, \tau(x,t))$$
and 
$$\nabla_x U(x,t) = \theta(x)^{-\frac 12}\nabla_\xi \Uc(x, 0, \tau(x,t)),$$
where $\theta(x) = T -t(x)$, $\tau(x,t) = \frac{t - t(x)}{\theta(x)}$ and $t(x)$ is uniquely determined by \eqref{def:tx}. Therefore, 
$$|\nabla_x U(x,t) e^{U(x,t)}|  = \theta(x)^{-\frac{3}{2}}\nabla_\xi \Uc(x,0, \tau(x,t)) e^{\Uc(x,0, \tau(x,t))}.$$
Using part $(ii)$ of Definition \ref{def:St}, we have for $|x| \in [r(t),\epsilon_0]$,\\
$$|\Uc(x,0,\tau(x,t)) - \hat \Uc(\tau(x,t))| \leq \delta_0, \quad |\nabla_\xi \Uc(x,0,\tau(x,t))| \leq \frac{C_0}{\sqrt{|\log(\theta(x))|}}.$$
Since $\delta_0 \leq \frac{1}{2}|\hat \Uc(1)| \leq \frac{1}{2}|\hat{\Uc}(\tau)|$, we have from \eqref{def:tx},
$$|\nabla_x U(x,t) e^{U(x,t)}| \leq \frac{C(K_0, C_0)}{\theta(x)^\frac 32 \sqrt{|\log \theta(x)|}} \leq \frac{C(K_0, C_0)}{\theta(r(t))^\frac 32 \sqrt{|\ln \theta(r(t))|}}.$$
Since $r(t) \to 0$ as $t \to T$, we have from \eqref{def:tx}, 
$$\theta(r(t)) \sim \frac{2}{K_0^2}\frac{r(t)^2}{|\ln r(t)|} \quad \text{and}\quad \ln \theta(r(t)) \sim \ln r(t) \quad \text{as} \;\; t \to T.$$
Recall that $r(t) = K_0\sqrt{(T-t)|\ln (T-t)|}$ which gives
$$\frac{C(K_0, C_0)}{\theta(r(t))^\frac 32 \sqrt{|\ln \theta(r(t))|}} \sim \frac{C'(K_0, C_0)}{(T-t)^\frac 32 |\ln (T-t)|}.$$
This concludes the proof of \eqref{est:nabUeU} for $|x| \in [r(t), \epsilon_0]$.\\

\noindent - \textit{Case 2: $|x| \geq \epsilon_0$.} We use here the information contained in $(iii)$ of Definition \ref{def:St}, which asserts that 
$$|\nabla_x^i (U(x,t) - U(x,t_0))| \leq \eta_0, \quad i = 0, 1, \quad \forall |x| \geq \epsilon_0.$$
Let 
$$\eta_{0,2}(\epsilon_0) = \frac{1}{2}\min \left\{\min_{|x| \geq \epsilon_0}|U(x,t_0)|, \min_{|x| \geq \epsilon_0}|\nabla U(x,t_0)|\right\}.$$
From \eqref{def:U0d0d1}, we have $\eta_{0,2}(\epsilon_0) > 0$. Hence, for $\eta_0 \in (0, \eta_{0,2}]$, we obtain for $|x| \geq \epsilon_0$,
$$\left|\nabla_x U(x,t) e^{U(x,t)}\right| \leq C \left|\nabla_x U(x,t_0) e^{U(x,t_0)}\right| \leq C \left|\nabla_x U^*(x) e^{U^*(x)}\right|,$$
where $\hat U^*$ is defined by \eqref{def:Uhatstar}. Note that $\left|\nabla_x U^*(x) e^{U^*(x)}\right| \leq C(\epsilon_0)$ for $|x| \geq \epsilon_0$.  Therefore, if $t_0 \in [t_{0,2},T)$ for some $t_{0,2} < T$, we derive \eqref{est:nabUeU} for $t = t_0$ and $|x| \geq \epsilon_0$, hence, for $t \geq t_0$ and $|x| \geq \epsilon_0$. This completes the proof of Proposition \ref{prop:propSt}.
\end{proof}

We now turn to the proof of Proposition \ref{prop:PropInitialdata} which is a direct consequence of the following lemma:
\begin{lemma}\label{lemm:propinit} There exists $K_{0,1} > 0$ such that for each $K_0 \geq K_{0,1}$ and $\delta_{0,1} > 0$, there exist $\alpha_{0,1}(K_0, \delta_{0,1}) > 0$, $C_{0,1}(K_0) > 0$ and $C_{0,1}'(K_0) > 0$ such that for all $\alpha_0 \in (0, \alpha_{0,1}]$, there exists $\epsilon_{0,1}(K_0, \delta_{0,1}, \alpha_0) > 0$ such that for all $ \epsilon_0 \in  (0,\epsilon_{0,1}]$ and $A \geq 1$, there exists $t_{0,1}(K_0, \delta_{0,1}, \epsilon_0, A, C_{0,1}) < T$ such that for all $t_0 \in [t_{0,1}, T)$, there exists a subset $\mathcal{D}_{t_0, A} \subset \Rb \times \RN$ with the following properties. If $U(x,t_0)$ is defined by \eqref{def:U0d0d1}, then:\\

\noindent $(I)$ For all $(d_0,d_1) \in \Dc_{t_0,A}$, we have $U_{d_0,d_1}(x,t_0)$ defined by \eqref{def:U0d0d1} belongs in\\
$\Sc^*(t_0, K_0, \epsilon_0, \alpha_0, A, \delta_{0,1}, \eta_0 = 0, C_{0,1}, C_{0,1}', t_0)$. More precisely,

$(i)$ \textit{Estimates in $\Dc_1$:} $\bar Q(s_0) \in \Vc_{A, K_0}(s_0)$ with strict inequalities, except for $(\bar Q_0, \bar Q_1)(s_0)$ in the sense that
$$
\left|\bar Q_0(s) -\frac{Ad_0}{s_0^2} \right| \leq Ce^{-s_0},\quad \left|\bar Q_{1,i}(s_0) - \frac{Ad_{1,i}}{s_0^2}\right|\leq Ce^{-s_0},\quad i \in \{1, \cdots, N\},$$
$$|\bar Q_{2,ij}(s_0)| \leq \frac{\ln(s_0)}{s_0^2},  \quad  \forall i,j \in \{1, \cdots, N\},$$
$$|\bar Q_-(y,s_0)| \leq \frac{1}{s_0^2}(|y|^3 + 1),\quad |(\nabla_y \bar Q)_\bot (y,s_0)| \leq \frac{1}{s_0^2}(|y|^3 + 1), \quad \forall y\in \RN,$$
$$\|\bar Q_e(s_0)\|_{L^\infty(\RN)} = 0,$$
where $\bar Q_m$ ($m = 0, 1, 2$), $\bar Q_-$, $\bar Q_\bot$ and $\bar Q_e$ are defined as in \eqref{decomqQ} and \eqref{decomQ2}.

$(ii)$ \textit{Estimates in $\Dc_2$:} For all $|x| \in \left[\frac{K_0}{4}\sqrt{|\ln(T-t_0)|(T-t_0)}, \epsilon_0\right]$, $\tau_0 = \tau_0(x,t_0) = \frac{t_0 - t(x)}{\theta(x)}$ and $|\xi| \leq \alpha_0 \sqrt{\ln \theta(x)}$, 
$$\left|\Uc(x,\xi, \tau_0) - \hat \Uc(\tau_0)\right| \leq \delta_{0,1}, \;\; |\nabla_\xi \Uc(x, \xi, \tau_0)| \leq \frac{C_{0,1}}{\sqrt{|\ln \theta(x)|}}, \;\; |\nabla^2_\xi \Uc(x, \xi, \tau_0)| \leq C_{0,1},$$
where $\Uc$, $\hat \Uc$, $t(x)$ and $\theta(x)$ are defined in \eqref{def:Uc}, \eqref{def:solUc}, \eqref{def:tx} and \eqref{def:thetax} respectively.\\

\noindent $(II)$ 
\begin{align*}
(d_0,d_1) \in \Dc_{t_0,A} &\Longleftrightarrow (\bar Q_0(s_0), \bar Q_1(s_0)) \in \hat \Vc_A(s_0),\\
(d_0,d_1) \in \partial \Dc_{t_0,A} &\Longleftrightarrow (\bar Q_0(s_0), \bar Q_1(s_0)) \in \partial \hat \Vc_A(s_0).
\end{align*}
\end{lemma}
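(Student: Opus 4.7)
The plan is to verify the three assertions of the lemma in turn: (I.i) the $\mathcal{D}_1$ mode bounds on $\bar Q(s_0)$, (I.ii) the $\mathcal{D}_2$ estimates on $\Uc$ at $\tau_0$, and (II) the linear bijection property of $\Lambda$. The strategy follows closely that of Merle and Zaag in \cite{MZnon97}, and the parameters will be chosen in the hierarchical order $K_0 \to \delta_{0,1} \to \alpha_0 \to \epsilon_0 \to t_0$ so that the several error terms appearing below are dominated one by one.

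For (I.i), the key observation is that $\bar Q_{d_0, d_1}(y)$ is a linear polynomial in $y$ multiplied by $\chi(16y, s_0)$, whose support $\{|y| \le K_0 \sqrt{s_0}/8\}$ is strictly contained in $\{|y| \le K_0 \sqrt{s_0}\}$; consequently $(1-\chi(y, s_0))\bar Q(y) \equiv 0$ and so $\|\bar Q_e(s_0)\|_{L^\infty} = 0$. For the Hermite projections, since $\chi(16y, s_0) = 1$ on a set containing the effective mass of $\rho$, orthogonality of the $h_\beta$ gives
\begin{equation*}
\bar Q_0(s_0) = \frac{A d_0}{s_0^2} + O(e^{-s_0}), \quad \bar Q_{1,i}(s_0) = \frac{A d_{1,i}}{s_0^2} + O(e^{-s_0}),
\end{equation*}
the $O(e^{-s_0})$ errors arising purely from the Gaussian tail of $\rho$ outside $\mathrm{supp}\,\chi(16\cdot, s_0)$; the higher mode components $\bar Q_{2,ij}$, $\bar Q_-$ and $(\nabla \bar Q)_\bot$ consist only of such tail terms (there being no polynomial contribution since $\bar Q$ is affine on its support) and are likewise $O(e^{-s_0})$, which is much smaller than the required $(\ln s_0)/s_0^2$ and $(1+|y|^3)/s_0^2$ once $t_0$ is close enough to $T$.

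For (I.ii), I would split $\mathcal{D}_2$ into an inner sub-region $\{\chi_1(x, t_0) = 1\}$ and an outer sub-region $\{\chi_1(x, t_0) = 0\}$. In the inner one, $\chi(16xe^{s_0/2}, s_0)$ also vanishes throughout $\mathcal{D}_2$ (because $|xe^{s_0/2}| \gg K_0 \sqrt{s_0}$ there), so $U(x, t_0) = s_0 + \ln \psi_\alpha(xe^{s_0/2}, s_0)$; substituting and using the defining identity $|x|^2 = K_0^2 \theta(x)|\ln\theta(x)|/16$ from \eqref{def:tx} one obtains
\begin{equation*}
\Uc(x, 0, \tau_0) - \hat{\Uc}(\tau_0) = \ln\!\left(\frac{\theta_0/\theta(x) + K_0^2/(16(4+4\alpha))}{\theta_0/\theta(x) + \frac{K_0^2}{16(4+4\alpha)}\,|\ln\theta(x)|/|\ln\theta_0|}\right) + O(s_0^{-1}),
\end{equation*}
with $\theta_0 = T - t_0$; since $\theta(x)/\theta_0 \le 16\,|\ln\theta_0|/K_0^2$ in this sub-region, $|\ln\theta(x)|/|\ln\theta_0| = 1 + O(\ln|\ln\theta_0|/|\ln\theta_0|)$ uniformly, which drives the full expression to $0$ as $t_0 \to T$. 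In the outer sub-region, $U(x, t_0) = \hat U^*(x)$, and a Taylor expansion of \eqref{def:Uhatstar} combined with the same relation $|x|^2 = K_0^2\theta(x)|\ln\theta(x)|/16$ yields $\hat U^*(x) = |\ln\theta(x)| + \ln(64(1+\alpha)/K_0^2) + O(\ln|\ln\theta(x)|/|\ln\theta(x)|)$; since $\theta_0/\theta(x) \ll 1$ one also has $\hat{\Uc}(\tau_0) = \ln(64(1+\alpha)/K_0^2) + O(\theta_0/\theta(x))$, so the two match up to errors controlled by taking $\epsilon_0$ small. The transition $\chi_1 \in (0,1)$ is handled by noting that $\hat U^*(x)$ and $s_0 + \ln \psi_\alpha(xe^{s_0/2}, s_0)$ agree to leading order on the overlap. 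The $\xi$-dependence for $|\xi| \le \alpha_0 \sqrt{|\ln\theta(x)|}$ contributes an $O(\alpha_0)$ correction by Taylor expansion (valid since $\xi\sqrt{\theta(x)} \ll |x|$ when $\alpha_0 \ll K_0$), and the gradient and Hessian bounds follow by direct differentiation of the same explicit formulas, producing the required factor $1/\sqrt{|\ln\theta(x)|}$ for $\nabla_\xi\Uc$ and a constant for $\nabla_\xi^2\Uc$.

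For (II), linearity of $\Lambda$ is immediate from the affine form of $\bar Q_{d_0, d_1}$, and the estimate proven in (I.i) shows that $\Lambda$ is an exponentially small perturbation of the linear isomorphism $(d_0, d_1) \mapsto (Ad_0/s_0^2, Ad_{1,i}/s_0^2)$. For $t_0$ close enough to $T$, this perturbation is far smaller than the gap between $\hat{\Vc}_A(s_0) = [-A/s_0^2, A/s_0^2]^{N+1}$ and twice that box, so the preimage $\Dc_{t_0, A} := \Lambda^{-1}(\hat{\Vc}_A(s_0))$ is a compact set contained in $[-2, 2]^{N+1}$ on which $\Lambda$ is a homeomorphism mapping boundary to boundary. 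The main obstacle throughout is really (I.ii): one must arrange so that the independent errors $O(\ln|\ln\theta_0|/|\ln\theta_0|)$ (inner sub-region), $O(\ln|\ln\epsilon_0|/|\ln\epsilon_0|)$ (outer sub-region), $O(\alpha_0)$ (from the $\xi$-Taylor expansion), and $O(s_0^{-1})$ (from the prefactor $e^{N/((2+2\alpha)s_0)}$ in $\psi_\alpha$) are \emph{simultaneously} dominated by $\delta_{0,1}/2$, which is precisely what dictates the hierarchical choice of parameters announced at the outset.
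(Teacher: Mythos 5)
Your proposal follows essentially the same route as the paper (which itself refers the mode estimates (I.i) to \cite{MZnon97} and only writes out (I.ii)): the same decomposition of $\mathcal{U}$ via the cutoff $\chi_1$, the same identity $|x|^2 = K_0^2 \theta(x)|\ln\theta(x)|/16$ from \eqref{def:tx} to cancel leading terms against $\hat\Uc(\tau_0)$, the same inner/outer matching, and the same hierarchical parameter choice. Two small imprecisions are worth flagging. First, in (I.i) you assert that $\bar Q_-$ and $(\nabla\bar Q)_\bot$ are ``$O(e^{-s_0})$'' pointwise: that is not quite right, since on the annulus where $\nabla\chi(16\cdot, s_0)$ lives the function $\bar Q_b$ itself is of size $\sim A s_0^{-3/2}$, not exponentially small; the correct statement is that these pieces are supported where $|y| \gtrsim K_0\sqrt{s_0}$, so the required bound $s_0^{-2}(1+|y|^3)$ holds because $|y|^3$ dominates $A|y|$ there once $s_0$ is large enough (depending on $A, K_0$), while the Hermite coefficients $\bar Q_{2,ij}$, $\bar Q_0$, $\bar Q_1$ do pick up only Gaussian-tail errors. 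Second, for the gradient in (I.ii), ``direct differentiation of the same explicit formulas'' hides a nontrivial cross term $(II+III-I)\sqrt{\theta(x)}\,\nabla_x\chi_1$ in the transition region; bounding it by $C/\sqrt{|\ln\theta(x)|}$ requires showing that $|I|+|II| \le C$ there (both being $\approx \ln(64(1+\alpha)/K_0^2)$), which the paper proves as a separate estimate \eqref{est:nabIII}. Neither issue changes the conclusion, but both should be made explicit.
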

\begin{proof} Part $(II)$ directly follows from item $(i)$ of part $(I)$. Since we have almost the same definition of $\Vc_{A,K_0}$ given part $(i)$ of Definition \ref{def:St} and the definition of initial data \eqref{def:barQs0} as those defined in \cite{MZnon97}, the proof of item $(i)$ is completely analogous as those in \cite{MZnon97}. For this reason, we kindly refer the interested reader to see in particular pages 1535-1536 in that paper for all details of these computations. Because our definition \eqref{def:U0d0d1} is different from those defined in \cite{MZnon97} in the region $\Dc_2$, we only deal with the proof of item $(ii)$.

 Let us consider $t_0 < T$, $K_0$, $\epsilon_0$, $\alpha_0$, $\delta_{0,1}$, $C_{0,1}$ and $C_{0,1}'$, and prove that if these constants are suitably chosen, then for $|x| \in \left[r_0, \epsilon_0\right]$ and $|\xi| \leq 2\alpha_0 \sqrt{|\ln \theta(x)|}$, where $r_0 = \frac{K_0}{4}\sqrt{\theta_0 |\ln \theta_0|}$, $\theta_0 = T - t_0$, $\theta(x) = T - t(x)$ and $t(x)$ is defined as in \eqref{def:tx}, we have 
\begin{equation*}
\left|\Uc(x,\xi, \tau_0) - \hat \Uc(\tau_0)\right| \leq \delta_{0,1}, \;\; |\nabla_\xi \Uc(x, \xi, \tau_0)|\leq \frac{C_{0,1}(K_0)}{\sqrt{|\log\theta(x)|}}, \;\; |\nabla_\xi^2 \Uc(x,\xi, \tau_0)| \leq C_{0,1}'(K_0),
\end{equation*}
where $\tau_0 = \frac{t_0 - t(x)}{T - t(x)}$ and $\hat \Uc$ is given by \eqref{def:solUc}.

We remark from \eqref{def:tx} that if $\alpha_0 \leq \frac{K_0}{16}$ and $\epsilon_0 \leq \frac 23 C(a_1, \alpha)$, where $C(a_1, \alpha)$ is introduced in \eqref{def:Uhatstar}, then for $|x| \in [r_0, \epsilon_0]$ and $|\xi| \leq 2\alpha_0\sqrt{|\ln \theta(x)|}$, we have $|\xi \sqrt{\theta(x)}| \leq \frac{|x|}{2}$, hence 
$$ \frac{r_0}{2} \leq \frac{|x|}{2} \leq |x + \xi\sqrt{\theta(x)}| \leq \frac{3|x|}{2} \leq C(a_1, \alpha).$$
This implies that 
$$\chi(16(x + \xi\sqrt{\theta(x)})\sqrt{\theta_0}, -\ln \theta_0) \chi_1(x + \xi\sqrt{\theta(x)}, t_0) = 0,$$
where $\chi$ and $\chi_1$ are defined by \eqref{def:chi} and \eqref{def:chi1} respectively. Hence, from \eqref{def:U0d0d1}, \eqref{def:Uc}, \eqref{def:Uhatstar} and \eqref{def:Phi_alpha}, we write
\begin{align}
\Uc(x, \xi, \tau_0) &= \ln\left(\frac{(8 + 8\alpha)\theta(x)|\ln |x + \xi\sqrt{\theta(x)}||}{|x + \xi\sqrt{\theta(x)}|^2} \right)(1 - \chi_1(x + \xi\sqrt{\theta(x)}), t_0)\nonumber\\
&-\ln\left[\frac{\theta_0}{\theta(x)}\left(1 + \frac{|x + \xi\sqrt{\theta(x)}|^2}{(4 + 4\alpha)\theta_0|\ln \theta_0|}\right)\right]\chi_1(x + \xi\sqrt{\theta(x)}), t_0)\nonumber\\
& -\frac{N}{(2 + 2\alpha)\ln \theta_0}\chi_1(x + \xi\sqrt{\theta(x)}), t_0)\label{def:Ucxitau0}\\
& := (I)(1 - \chi_1(x + \xi\sqrt{\theta(x)}), t_0)) + (II + III)\chi_1(x + \xi\sqrt{\theta(x)}), t_0).\nonumber
\end{align}

From \eqref{def:tx}, we remark that 
\begin{equation}\label{eq:relthetaxx}
\ln \theta(x) \sim 2\ln|x|, \quad \theta(x) \sim \frac{8}{K_0^2}\frac{|x|^2}{|\ln |x||} \quad \text{as $|x| \to 0$},
\end{equation}
from which we have for a fixed number $K_0 > 0$,
\begin{equation}\label{eq:rer0R0theta}
\theta(r_0) \sim \theta_0, \; \theta(R_0) \sim \frac{16}{K_0^2}\theta_0|\ln \theta_0|, \; \theta(2R_0) \sim \frac{64}{K_0^2}\theta_0|\ln \theta_0|, \; R_0 = \sqrt{\theta_0}|\ln \theta_0|.
\end{equation}

\noindent \textit{- Estimate on $\Uc$}. By linearity and the definition \eqref{def:chi1} of $\chi_1$, it is enough to prove that for $|x| \in [R_0, \epsilon_0]$ and $|\xi| \leq 2\alpha_0 \sqrt{|\ln \theta(x)|}$,
\begin{align}\label{est:IUc}
\left|(I) - \hat \Uc(\tau_0) \right| \leq \frac{\delta_{0,1}}{2},
\end{align}
and for $|x| \in [r_0, 2R_0]$ and $|\xi| \leq 2\alpha_0 \sqrt{|\ln\theta(x)|}$,
\begin{align}\label{est:IIUc}
\left|(II) - \hat \Uc(\tau_0) \right| \leq \frac{\delta_{0,1}}{2}.
\end{align}
We start with \eqref{est:IUc}. From \eqref{def:tx} and \eqref{def:solUc}, we write
\begin{align*}
|(I) - \hat \Uc(\tau_0)| &= \left|-\ln\left(\frac{(8 + 8\alpha)|\ln |x + \xi\sqrt{\theta(x)}||}{|\frac{K_0}{4}\sqrt{|\ln \theta(x)|} + \xi|^2} \right)^{-1} + \ln\left(\frac{\theta_0}{\theta(x)} + \frac{K_0^2/16}{4 + 4\alpha}\right) \right|\\
&\leq C(K_0)\left|\frac{\theta_0}{\theta(x)} + \frac{K_0^2}{16(4 + 4\alpha)} - \frac{|\frac{K_0}{4}\sqrt{|\ln \theta(x)|} + \xi|^2}{(4 + 4\alpha)|\ln |x + \xi\sqrt{\theta(x)}|^2|} \right|\\
& \leq C(K_0)\left[\left|\frac{\theta_0}{\theta(R_0)}\right| +\left| \frac{|\frac{K_0}{4}\sqrt{|\ln \theta(x)|} + 2\alpha_0 \sqrt{|\ln \theta(x)|}|^2}{|\ln |x +2\alpha_0\sqrt{\theta(x)|\ln \theta(x)|}|^2|} - \frac{K_0^2}{16}\right|\right]\\
& \leq C(K_0)\left[\frac{1}{|\ln \theta_0|} +\left|\frac{|\ln \theta(x)|}{|\ln|x|^2 + \ln(1 + 8\alpha_0/K_0)^2|}\left(\frac{K_0}{4} + 2\alpha_0\right)^2 - \frac{K_0^2}{16}\right|\right].
\end{align*}
Since $|x| \leq \epsilon_0$ and $\ln |x|^2 \sim \ln \theta(x)$ as $|x| \to 0$, we find $\alpha_{0,1}(K_0, \delta_{0,1})$ and $t_{0,1}(K_0, \delta_{0,1})$ such that for each $\alpha_0 \leq \alpha_{0,1}$, there is $\epsilon_{0,1}(K_0, \delta_{0,1}, \alpha_0)$ such that for all $\epsilon_0 \leq \epsilon_{0,1}$ and $t_0 \geq t_{0,1}$, the estimate \eqref{est:IUc} holds. 

We now deal with \eqref{est:IIUc}. From \eqref{def:solUc} and \eqref{def:tx}, we write 
\begin{align*}
|(II) - \hat \Uc(\tau_0)| &= \left|\ln\left(\frac{\theta_0}{\theta(x)} + \frac{K_0^2/16}{4 + 4\alpha}\right) - \ln\left(\frac{\theta_0}{\theta(x)} + \frac{|x + \xi\sqrt{\theta(x)} |^2}{(4 + 4\alpha)\theta(x)|\ln \theta_0|} \right) \right|\\
&\quad \leq C\left|\frac{|x + \xi\sqrt{\theta(x)}|^2}{\theta(x)|\ln \theta_0|} - \frac{K_0^2}{16} \right|\leq CK_0^2\left|\left|\sqrt{\frac{\ln \theta(x)}{\ln \theta_0}} + \frac{4\xi}{K_0\sqrt{|\ln \theta_0|}} \right|^2  - 1\right|.
\end{align*}
Since $|x| \in [r_0, 2R_0]$ and $|\xi| \leq 2\alpha_0\sqrt{|\ln \theta(x)|}$, we have 
\begin{align*}
\frac{\ln \theta(2R_0)}{\ln \theta_0}\left(1 - \frac{8\alpha_0}{K_0} \right)^2 - 1 &\leq \left(\sqrt{\frac{\ln \theta(x)}{\ln \theta_0}} + \frac{4\xi}{K_0\sqrt{|\ln \theta_0|}} \right)^2 -1\\
&\qquad \qquad  \leq \frac{\ln \theta(r_0)}{\ln \theta_0}\left(1 + \frac{8\alpha_0}{K_0} \right)^2 - 1.
\end{align*}
Together with \eqref{eq:rer0R0theta}, we find $\alpha_{0,2}(K_0, \delta_{0,1})$ and $t_{0,2}(K_0, \delta_{0,1}) < T$ such that for all $\alpha_0 \leq \alpha_{0,2}$ and $t_0 \geq t_{0,2}$, the estimate \eqref{est:IIUc} holds.\\

\noindent - \textit{Estimate on $\nabla_\xi \Uc$.} From \eqref{def:Ucxitau0}, we write
\begin{align*}
\nabla_\xi \Uc(x, \xi, \tau_0) &= \nabla_\xi(I)(1 - \chi_1(x + \xi\sqrt{\theta(x)}), t_0))\\
&\quad  + \nabla_\xi(II)\chi_1(x + \xi\sqrt{\theta(x)}), t_0)\\
&\qquad + (II - I)\sqrt{\theta(x)}\nabla_x \chi_1(x + \xi\sqrt{\theta(x)}), t_0).
\end{align*}
By the definition \eqref{def:chi1} of $\chi_1$, it is enough to prove the followings in order to obtain the estimate on $\nabla_\xi \Uc$:\\
- for $|x| \in [R_0, \epsilon_0]$ and $|\xi| \leq 2\alpha_0 \sqrt{|\ln \theta(x)|}$,
\begin{equation}\label{est:nabI}
|\nabla_\xi (I)| \leq \frac{C(K_0)}{\sqrt{|\ln \theta(x)|}},
\end{equation}
- for $|x| \in [r_0, 2R_0]$ and $|\xi| \leq 2\alpha_0 \sqrt{|\ln \theta(x)|}$,
\begin{equation}\label{est:nabII}
|\nabla_\xi (II)| \leq \frac{C(K_0)}{\sqrt{|\ln \theta(x)|}},
\end{equation}
- for $|x| \in [R_0, 2R_0]$ and $|\xi| \leq 2\alpha_0 \sqrt{|\ln \theta(x)|}$,
\begin{equation}\label{est:nabIII}
|I - II|\sqrt{\theta(x)}|\nabla_x \chi_1(x + \xi\sqrt{\theta(x)}), t_0)| \leq \frac{C(K_0)}{\sqrt{|\ln \theta(x)|}}.
\end{equation}
For \eqref{est:nabI} and \eqref{est:nabII}, a straightforward computation yields
\begin{align*}
|\nabla_\xi (I)| &= \left|\nabla_\xi \ln\left(\frac{(8 + 8\alpha)|\ln |x + \xi\sqrt{\theta(x)}||}{|\frac{K_0}{4}\sqrt{|\ln \theta(x)|} + \xi|^2} \right) \right|\\
&\quad  \leq \frac{C\sqrt{\theta(x)}}{|x + \xi\sqrt{\theta(x)}||\ln|x + \xi \sqrt{\theta(x)}||} + \frac{C}{|\frac{K_0}{4}\sqrt{|\ln \theta(x)|} + \xi|} \leq \frac{C}{\sqrt{|\ln \theta(x)|}},
\end{align*}
and 
\begin{align*}
|\nabla_\xi (II)| &= \left|\nabla_\xi \ln\left(1 + \frac{|x + \xi\sqrt{\theta(x)}|^2}{(4 + 4\alpha)\theta_0|\ln \theta_0|}\right) \right|\\
&\quad \leq \frac{C\sqrt{\theta(x)}}{|x + \xi\sqrt{\theta(x)}|} \leq \frac{C}{|\frac{K_0}{4} \sqrt{|\ln \theta(x)|} + \xi |} \leq \frac{C}{\sqrt{|\ln \theta(x)|}}.
\end{align*}
 As for \eqref{est:nabIII}, we note from \eqref{def:chi1} that 
 $$|\nabla_x \chi_1(x + \xi\sqrt{\theta(x)}), t_0)| \leq \frac{C}{\sqrt{\theta_0}|\ln \theta_0|}.$$
 From \eqref{eq:rer0R0theta}, we have  $\theta(x) \sim C(K_0)\theta_0|\ln \theta_0|$ and $|\ln \theta(x)| \sim |\ln \theta_0|$ for $x \in [R_0, 2R_0]$. Therefore, the estimate \eqref{est:nabIII} follows once the following is proved: for $|x| \in [R_0, 2R_0]$ and $|\xi| \leq 2\alpha_0\sqrt{|\ln \theta(x)|}$,
 $$|(I)| + |(II)| \leq C.$$
To this end, we note from \eqref{eq:rer0R0theta} and \eqref{eq:relthetaxx} that $|\ln |x + \xi\sqrt{\theta(x)}|| = |\ln |x| + \ln (1 + \frac{\xi\sqrt{\theta(x)}}{|x|})| \sim \frac{1}{2}|\ln \theta(x)|$, and that $|x + \xi\sqrt{\theta(x)}|^2 \sim C(K_0)\theta(x)|\ln \theta(x)|$. This follows
$$|(I)| = \left|\ln \left(\frac{(8 + 8\alpha)\theta(x)|\ln |x + \xi \sqrt{\theta(x)}||}{|x + \xi\sqrt{\theta(x)}|^2}\right)\right|\sim \left|\ln \left(\frac{(4 + 4\alpha)\theta(x)|\ln \theta(x)|}{C(K_0)\theta(x)|\ln \theta(x)|}\right)\right| \leq C,$$
and 
$$|(II)| \sim \left|\ln \left[\frac{1}{|\ln \theta_0|}\left(\frac{C(K_0)\theta(x)|\ln \theta(x)|}{(4 + 4\alpha)\theta_0|\ln \theta_0|}\right) \right] \right| \sim \left|\ln C(K_0)\right| \leq C,$$
which concludes the proof of \eqref{est:nabIII}. The expected estimate on $\nabla_\xi \Uc$ then follows from \eqref{est:nabI}, \eqref{est:nabII} and \eqref{est:nabIII}.

In the same way, we can show that it $t_0 \geq t_{0,3}(K_0, \epsilon_0, C_{0,1})$, then for $|x| \in [r_0, \epsilon_0]$ and $|\xi| \leq 2\alpha_0\sqrt{|\ln \theta(x)|}$, we have $|\nabla^2_\xi \Uc(x, \xi, \tau_0)| \leq C_{0,1}$. This completes  the proof of Lemma \ref{lemm:propinit} as well as the proof of Proposition \ref{prop:PropInitialdata}.
\end{proof}

\section{No blowup under some threshold for equation \eqref{equ:problem}.} \label{sec:noblowup}
We give in this appendix the proof of Proposition \ref{prop:noblowup} whose proof uses ideas given in \cite{GKcpam89} treated for equation \eqref{equ:sheGra} without the gradient term $(\alpha = 0)$. The proof is based on the following integral equations for localizations of $e^u$ and $\nabla u$:
\begin{lemma} \label{lemm:vgtau} Let $r > 0$ and $\phi_r$ be a smooth function supported on $B_r = \{x \in \RN, |x| < r\}$ such that $\phi_r = 1$ on $B_{r/2}$ and $0 \leq \phi_r \leq 1$. Let $w_r = \phi_r e^u$ and $g_r = \phi_r \nabla u$, where $u$ satisfies equation \eqref{ine:pb}. Then, we have the following: 
\begin{align}
\|w_r(\tau)\|_{L^\infty} &\leq \|w(0)\|_{L^\infty} + C \int_{0}^\tau\left(1 + \frac{1}{\sqrt{\tau -s}}\right)\left\|e^{u(s)}\right\|_{L^\infty(B_r)}ds\nonumber\\
& + C\int_0^\tau \left(\left\|e^{u(s)}\right\|_{L^\infty(B_r)} + \|\nabla u(s)\|_{L^\infty(B_r)}^2\right)\|w_r(s)\|_{L^\infty} ds,\label{equ:wtau}\\
\|g_r(\tau)\|_{L^\infty} &\leq \|g(0)\|_{L^\infty} + C \int_{0}^\tau\left(1+ \frac{1}{\sqrt{\tau -s}}\right)\|\nabla u(s)\|_{L^\infty(B_r)}ds + C\int_0^\tau \|\nabla u(s)\|^2_{L^\infty(B_r)}ds\nonumber\\
& + C\int_0^\tau \left(\left\|e^{u(s)}\right\|_{L^\infty(B_r)} + \frac{1}{\sqrt{\tau - s}}\|\nabla u(s)\|_{L^\infty(B_r)}\right) \|g_r(s)\|_{L^\infty}ds.\label{equ:gtau}
\end{align}
\end{lemma}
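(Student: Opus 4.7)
I would derive evolution equations for the localized quantities $w_r=\phi_r e^u$ and $g_r=\phi_r\nabla u$ directly from \eqref{ine:pb}, and then convert them into integral inequalities by applying Duhamel's formula to the operator $\partial_\tau-\Delta$. The two standard $L^\infty$ bounds on the free heat semigroup, namely
\[
\|e^{t\Delta}f\|_{L^\infty}\leq \|f\|_{L^\infty}, \qquad \|\nabla e^{t\Delta}f\|_{L^\infty}\leq \tfrac{C}{\sqrt{t}}\|f\|_{L^\infty},
\]
drive the whole argument; the factors $(\tau-s)^{-1/2}$ appearing in \eqref{equ:wtau}--\eqref{equ:gtau} are precisely the signature of pieces of the source that will be arranged in divergence form before Duhamel is applied.

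\noindent\textbf{The estimate \eqref{equ:wtau}.} Setting $R_1=\partial_\tau u-\Delta u$ with $|R_1|\leq K(1+e^u+|\nabla u|^2)$ by the first line of \eqref{ine:pb}, using $\Delta e^u=e^u\Delta u+e^u|\nabla u|^2$, and exploiting the rewriting $-2e^u\nabla\phi_r\cdot\nabla u=-\nabla\cdot(2e^u\nabla\phi_r)+2e^u\Delta\phi_r$ to put one commutator in divergence form, I would compute
\[
(\partial_\tau-\Delta)w_r \;=\; \phi_r e^u R_1 \;-\; \phi_r e^u|\nabla u|^2 \;+\; e^u\Delta\phi_r \;-\; \nabla\cdot\bigl(2e^u\nabla\phi_r\bigr).
\]
Duhamel followed by $L^\infty$ estimates then yields \eqref{equ:wtau}: the contraction bound controls the first two summands via the pointwise inequalities $|\phi_r e^u R_1|\leq K w_r\bigl(1+\|e^u\|_{L^\infty(B_r)}+\|\nabla u\|^2_{L^\infty(B_r)}\bigr)$ and $\phi_r e^u|\nabla u|^2=|\nabla u|^2 w_r$; the cutoff-supported $e^u\Delta\phi_r$ contributes the non-singular $\|e^u\|_{L^\infty(B_r)}$ term; and the divergence-form piece, through the smoothing bound, produces the $(\tau-s)^{-1/2}\|e^u\|_{L^\infty(B_r)}$ contribution. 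The spurious constant $1$ in the $R_1$ estimate can be absorbed into $\|e^u\|_{L^\infty(B_r)}$ via $w_r\leq\|e^u\|_{L^\infty(B_r)}$.

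\noindent\textbf{The estimate \eqref{equ:gtau} and main obstacle.} For $g_r$, componentwise differentiation together with the identity $\partial_j\phi_r\,\partial_i\partial_j u=\partial_i(\partial_j\phi_r\,\partial_j u)-\partial_i\partial_j\phi_r\,\partial_j u$ (summed over $j$) yields
\[
(\partial_\tau-\Delta)(g_r)_i \;=\; \phi_r\,\partial_i R_1 \;-\; \Delta\phi_r\,\partial_i u \;+\; 2\sum_{j}\partial_i\partial_j\phi_r\,\partial_j u \;-\; 2\partial_i(\nabla\phi_r\cdot\nabla u).
\]
The three cutoff-supported pieces produce, via the contraction and smoothing bounds respectively, the $\|\nabla u\|_{L^\infty(B_r)}$ contributions of \eqref{equ:gtau} (without and with the $(\tau-s)^{-1/2}$ factor). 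The heart of the proof is the source $\phi_r\,\partial_i R_1$: only a bound on $|\nabla R_1|$ is available in \eqref{ine:pb}, not on $R_1$ itself, so the naive identity $\phi_r\partial_i R_1=\partial_i(\phi_r R_1)-\partial_i\phi_r\,R_1$ would introduce an unwanted $(\tau-s)^{-1/2}\|e^u\|_{L^\infty(B_r)}$ term that is absent from \eqref{equ:gtau}. The correct route is to keep the derivative where it is and exploit the structural form of $|\nabla R_1|\leq K|\nabla(e^u+|\nabla u|^2)|$: the $e^u\nabla u$-part is controlled pointwise by $Ke^u|g_r|$, yielding the $\|e^u\|_{L^\infty(B_r)}\|g_r\|$ integral directly via contraction; the $\nabla(|\nabla u|^2)$-part is rewritten as $\phi_r\nabla(|\nabla u|^2)=\nabla(\phi_r|\nabla u|^2)-\nabla\phi_r\,|\nabla u|^2$, which through the smoothing bound (together with the pointwise estimate $\phi_r|\nabla u|^2\leq\|\nabla u\|_{L^\infty(B_r)}|g_r|$) produces the $(\tau-s)^{-1/2}\|\nabla u\|_{L^\infty(B_r)}\|g_r\|$ integral, while the boundary piece gives the non-singular $\|\nabla u\|^2_{L^\infty(B_r)}$ contribution. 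The delicate point is precisely arranging this decomposition so that no second-derivative norm of $u$ ever surfaces in the final estimate; this is where the form of the hypothesis $|\nabla R_1|\leq K|\nabla(e^u+|\nabla u|^2)|$ (as opposed to a pointwise bound on $R_1$) is essential.
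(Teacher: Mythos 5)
Your derivation of the evolution equation for $w_r$ and the subsequent Duhamel argument are identical to the paper's: the rewriting $-2e^u\nabla\phi_r\cdot\nabla u=-2\nabla\cdot(e^u\nabla\phi_r)+2e^u\Delta\phi_r$, the split into cutoff-supported contraction terms and a divergence term handled by the smoothing bound, and the pointwise estimate of $\phi_r e^u(\partial_\tau u-\Delta u-|\nabla u|^2)$ via the first line of \eqref{ine:pb} all match. For the gradient inequality \eqref{equ:gtau} the paper merely asserts that it ``follows similarly'' and gives no details, so what you supply here is genuinely additive. Your identification of the central difficulty is correct: the source $\phi_r\,\partial_i R_1$ cannot be treated by the naive rewriting $\partial_i(\phi_r R_1)-(\partial_i\phi_r)R_1$, since the divergence piece would place $(\tau-s)^{-1/2}\|e^{u(s)}\|_{L^\infty(B_r)}$ (and a singular constant) into the inhomogeneous part, neither of which appears in \eqref{equ:gtau}. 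Your remedy --- split $\nabla(e^u+|\nabla u|^2)=e^u\nabla u+\nabla(|\nabla u|^2)$, absorb $\phi_r e^u\nabla u = e^u g_r$ by contraction into the $\|e^{u}\|_{L^\infty(B_r)}\|g_r\|_{L^\infty}$ term, and treat $\phi_r\nabla(|\nabla u|^2)=\nabla(\phi_r|\nabla u|^2)-\nabla\phi_r\,|\nabla u|^2$ by the smoothing bound combined with $\phi_r|\nabla u|^2\leq\|\nabla u\|_{L^\infty(B_r)}|g_r|$ --- produces exactly the remaining terms of \eqref{equ:gtau}, and the three cutoff-commutator pieces give the $\bigl(1+(\tau-s)^{-1/2}\bigr)\|\nabla u\|_{L^\infty(B_r)}$ contribution as you describe.

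One caveat, which is as much about the paper's formulation as about your proof: the second line of \eqref{ine:pb} is written as a magnitude bound, $|\nabla(\partial_\tau u-\Delta u)|\leq K|\nabla(e^u+|\nabla u|^2)|$. A magnitude bound alone does not license splitting the vector field $\phi_r\nabla R_1$ along the decomposition of $\nabla(e^u+|\nabla u|^2)$; by positivity of the heat kernel it yields only $|e^{(\tau-s)\Delta}(\phi_r\nabla R_1)|\leq e^{(\tau-s)\Delta}\bigl(K\phi_r|\nabla(e^u+|\nabla u|^2)|\bigr)$, and the right-hand side still involves $|\nabla^2 u|$ pointwise, so the smoothing gain from the divergence structure would be lost. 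What the argument really uses is that $\nabla(\partial_\tau u-\Delta u)$ is itself a bounded linear combination of $e^u\nabla u$ and $\nabla(|\nabla u|^2)$, which holds trivially in the intended application since there $\partial_\tau u-\Delta u=e^u+\alpha|\nabla u|^2$. You do write that you ``exploit the structural form'' of the hypothesis, so you seem to intend exactly this, but it is worth stating explicitly: without the structural reading of \eqref{ine:pb}, the passage from $\phi_r\partial_i R_1$ to your decomposition would be an actual gap.
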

\begin{proof} We only deal with \eqref{equ:wtau} since \eqref{equ:gtau} follows similarly. By the definition, we see that $w_r$ satisfies the following equation:
$$\partial_\tau w_r - \Delta w_r = \big(e^u \Delta \phi_r - 2\nabla \cdot (e^u \nabla \phi_r)\big) + e^u\phi_r (\partial_\tau u - \Delta u - |\nabla u|^2),$$
hence, the semigroup representation formula for $w_r$ gives
\begin{align}
w_r(\tau) = e^{\tau \Delta}w_r(0) &+ \int_0^\tau e^{(\tau - s)\Delta}\big(e^u \Delta \phi_r - 2\nabla \cdot (e^u \nabla \phi_r)\big) ds \nonumber\\
&+ \int_0^\tau e^{(\tau - s)\Delta}e^u\phi_r (\partial_\tau u - \Delta u - |\nabla u|^2)ds,\label{equ:wtauint}
\end{align}
for $0 < \tau < 1$, where $e^{\theta \Delta}$ is the semigroup associated with the heat equation in $\RN$ with the following well known properties:
\begin{equation}\label{equ:regularityeffect}
\|e^{\theta \Delta }f\|_{L^\infty} \leq \|f\|_{L^\infty}, \quad \|\nabla e^{\theta \Delta}f\|_{L^\infty} \leq \frac{1}{\sqrt{\theta}}\|f\|_{L^\infty}, \quad \forall \theta > 0, \; f \in W^{1, \infty}(\RN).
\end{equation}
Using this regularity effect and the definition of $\phi_r$, the middle term in the right hand side of \eqref{equ:wtauint} is bounded by
\begin{align*}
\int_0^\tau \left\|e^u \Delta \phi_r - 2\nabla \cdot (e^u \nabla \phi_r)\right\|_{L^\infty}ds &\leq C\int_{0}^\tau \left(1 + \frac{1}{\sqrt{\tau - s}}\right)\left\|e^{u(s)}\right\|_{L^\infty(B_r)}ds.
\end{align*}
From inequality \eqref{ine:pb}, the last term is dominated by
\begin{align*}
C\int_0^\tau& \| e^{(\tau -s)\Delta}e^u\phi_r(1 + e^u + |\nabla u|^2)\|_{L^\infty} ds \\
&\quad \leq C\int_0^\tau \left\|e^{u(s)}\right\|_{L^\infty(B_r)}ds\\
&\qquad  + C\int_0^\tau \left(\left\|e^{u(s)}\right\|_{L^\infty(B_r)} + \|\nabla u(s)\|_{L^\infty(B_r)}^2 \right)\|w_r(s)\|_{L^\infty} ds.
\end{align*}
This concludes the proof of \eqref{equ:wtau} as well as Lemma \ref{lemm:vgtau}.
\end{proof}

Before going to the proof of Proposition \ref{prop:noblowup}, it is convenient to recall the two following lemmas from \cite{GKcpam89} which will be used in the proof. The first lemma gives estimates on an integration.
\begin{lemma} \label{lemm:GK1} For $0 < \alpha < 1$, $\theta >0$ and $\theta < h < 1$, the integral 
$$I(h) = \int_h^1(s - h)^{-\alpha}s^{-\theta}ds$$
satisfies
\begin{itemize}
\item[(i)] $I(h) \leq \left(\frac{1}{1 - \alpha} + \frac{1}{\alpha + \theta - 1}\right)h^{1 - \alpha - \theta}\;$ if $\;\alpha + \theta > 1$.
\item[(ii)] $I(h) \leq \frac{1}{1 - \alpha} + |\ln h|\;$ if $\;\alpha + \theta = 1$. 
\item[(iii)] $I(h) \leq \frac{1}{1 - \alpha - \theta}\;$ if $\;\alpha + \theta < 1$.
\end{itemize}
\end{lemma}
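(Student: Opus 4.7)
\textbf{Proof plan for Lemma \ref{lemm:GK1}.} This is a classical two-scale estimate for the integral $I(h)$, and the approach is to split the domain at the natural scale $s = 2h$. The plan is that on the inner piece $[h, 2h]$ the factor $(s-h)^{-\alpha}$ is singular while $s^{-\theta}$ is essentially constant ($s \geq h$ gives $s^{-\theta} \leq h^{-\theta}$), whereas on the outer piece $[2h, 1]$ the inequality $s - h \geq s/2$ converts $(s-h)^{-\alpha}$ into $s^{-\alpha}$ up to a factor $2^\alpha$, so that the integral is controlled by $\int_{2h}^{1} s^{-(\alpha+\theta)}\,ds$.

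Carrying this out, on $[h,2h]$ one computes directly
$$\int_{h}^{2h}(s-h)^{-\alpha}\,ds \;=\; \frac{h^{1-\alpha}}{1-\alpha},$$
yielding a contribution of size $h^{1-\alpha-\theta}/(1-\alpha)$, which already explains the first constant appearing on the right-hand side of cases (i) and (ii). On $[2h,1]$ the bound $(s-h)^{-\alpha}\leq 2^{\alpha}s^{-\alpha}$ reduces everything to the antiderivative of $s^{-(\alpha+\theta)}$ on $[2h,1]$, whose behaviour splits into three regimes according to the sign of $1-\alpha-\theta$.

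In case (i), $\alpha+\theta>1$ gives $\int_{2h}^{1}s^{-(\alpha+\theta)}\,ds \leq (2h)^{1-\alpha-\theta}/(\alpha+\theta-1)$, of the same order $h^{1-\alpha-\theta}$ as the inner piece, and collecting the two constants produces the stated bound. In case (ii), $\alpha+\theta=1$ gives $\int_{2h}^{1}s^{-1}\,ds = |\ln(2h)|$, which combined with the inner piece $h^{0}/(1-\alpha)$ matches the statement (the $\ln 2$ being absorbed since $h$ stays bounded away from zero one may include it as a harmless additive constant). In case (iii), $\alpha+\theta<1$ gives $\int_{2h}^{1}s^{-(\alpha+\theta)}\,ds \leq 1/(1-\alpha-\theta)$; moreover, since $1-\alpha-\theta>0$ and $h<1$, the inner piece $h^{1-\alpha-\theta}/(1-\alpha)$ is bounded by $1/(1-\alpha)$, so the total is a pure constant of the advertised form.

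There is no real analytical obstacle here; the lemma is a bookkeeping exercise and the only care needed is in tracking numerical constants so as to match the statement exactly (the forms given in the lemma are in any case slightly loose, so the bounds produced by the split above certainly imply them).
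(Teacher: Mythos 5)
The paper does not actually prove this lemma; it cites Giga and Kohn (Lemma 2.2, p.\ 851 of \cite{GKcpam89}), so there is nothing internal to compare against. Reviewing the proposal on its own merits:

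Your split at $s=2h$ is the right one, but the inequality you choose on the outer piece is the wrong one for matching the stated constants. Bounding $(s-h)^{-\alpha}\leq 2^{\alpha}s^{-\alpha}$ on $[2h,1]$ yields, in case (i),
\[
I(h)\;\leq\;\frac{h^{1-\alpha-\theta}}{1-\alpha}\;+\;2^{\alpha}\,\frac{(2h)^{1-\alpha-\theta}}{\alpha+\theta-1}
\;=\;\left(\frac{1}{1-\alpha}+\frac{2^{\,1-\theta}}{\alpha+\theta-1}\right)h^{1-\alpha-\theta},
\]
and the factor $2^{\,1-\theta}$ exceeds $1$ whenever $\theta<1$, so this does \emph{not} imply the stated bound. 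Likewise in case (ii) you pick up a multiplicative $2^{\alpha}$ on the logarithmic term (your write-up silently drops it), and in case (iii) the total $\frac{1}{1-\alpha}+\frac{2^{\alpha}}{1-\alpha-\theta}$ is strictly larger than $\frac{1}{1-\alpha-\theta}$. Your closing remark that ``the bounds produced by the split above certainly imply them'' has the implication backwards: your constants are \emph{larger} than the lemma's, so your bounds are implied by, not implying, the statement.

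The repair is small but essential: on the outer piece $[2h,1]$ use $s\geq s-h$ (hence $s^{-\theta}\leq (s-h)^{-\theta}$, as $\theta>0$) rather than $s-h\geq s/2$. Then
\[
\int_{2h}^{1}(s-h)^{-\alpha}s^{-\theta}\,ds\;\leq\;\int_{2h}^{1}(s-h)^{-\alpha-\theta}\,ds
\;=\;\int_{h}^{1-h}u^{-\alpha-\theta}\,du,
\]
which, dropping the $(1-h)^{1-\alpha-\theta}$ boundary term, gives exactly $h^{1-\alpha-\theta}/(\alpha+\theta-1)$ in case (i), $|\ln h|$ minus a positive term in case (ii), and $\frac{1-h^{1-\alpha-\theta}}{1-\alpha-\theta}$ in case (iii). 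In case (iii) you must also keep the $-h^{1-\alpha-\theta}$ term and combine it with the inner contribution $h^{1-\alpha-\theta}/(1-\alpha)$: since $\theta>0$ forces $\frac{1}{1-\alpha}\leq\frac{1}{1-\alpha-\theta}$, the sum telescopes to at most $\frac{1}{1-\alpha-\theta}$. Your treatment of case (iii), which just bounds the inner piece by $\frac{1}{1-\alpha}$ and adds, overshoots the claimed constant and so leaves a genuine gap.
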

\begin{proof} See Lemma 2.2, page 851 in \cite{GKcpam89}.
\end{proof}
The second lemma is a version of Gronwall's inequality.
\begin{lemma} \label{lemm:Gronwall} If $y(t), r(t)$ and $q(t)$ are continuous functions defined on $[t_0,t_1]$ such that
$$y(t)\leq y_0 +\int_{t_0}^t q(s)ds + \int_{t_0}^tr(s)y(s)ds, \quad t_0 \leq t \leq t_1,$$
then 
$$y(t) \leq e^{\int_{t_0}^t r(s)ds} \left[y_0 + \int_{t_0}^t q(s) e^{-\int_{t_0}^s r(\tau)d\tau}ds\right].$$
\end{lemma}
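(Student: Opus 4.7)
The plan is to apply the classical integrating-factor technique to the absolutely continuous auxiliary function $Y(t) := \int_{t_0}^t r(s)y(s)\,ds$, which satisfies $Y(t_0)=0$ and $Y'(t)=r(t)y(t)$ almost everywhere. First I would substitute the hypothesis $y(t) \leq y_0 + \int_{t_0}^t q(s)\,ds + Y(t)$ into this formula for $Y'$, producing the linear differential inequality
$$Y'(t) - r(t)Y(t) \leq r(t)\Big[y_0 + \int_{t_0}^t q(s)\,ds\Big].$$

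Writing $R(t) := \int_{t_0}^t r(s)\,ds$ and multiplying through by $e^{-R(t)}$ turns the left-hand side into $\frac{d}{dt}\bigl(e^{-R(t)}Y(t)\bigr)$, since $\frac{d}{dt}e^{-R(t)} = -r(t)e^{-R(t)}$. Integrating from $t_0$ to $t$ then bounds $e^{-R(t)}Y(t)$ by a double integral against $r(\sigma)e^{-R(\sigma)}$. Two elementary reductions simplify it: the $y_0$-piece collapses via $r(\sigma)e^{-R(\sigma)} = -\frac{d}{d\sigma}e^{-R(\sigma)}$ to $y_0(1-e^{-R(t)})$, and the $q$-piece yields $\int_{t_0}^t q(s)\bigl[e^{-R(s)}-e^{-R(t)}\bigr]\,ds$ after swapping the order of integration by Fubini on the triangle $\{(s,\sigma): t_0 \leq s \leq \sigma \leq t\}$.

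Multiplying the resulting inequality by $e^{R(t)}$ produces a bound for $Y(t)$ in which the terms $y_0$ and $\int_{t_0}^t q(s)\,ds$ appear explicitly with a minus sign. Feeding this bound back into $y(t) \leq y_0 + \int_{t_0}^t q(s)\,ds + Y(t)$ causes those two terms to cancel exactly, leaving
$$y(t) \leq e^{R(t)}\Big[y_0 + \int_{t_0}^t q(s)\,e^{-R(s)}\,ds\Big],$$
which is the desired conclusion.

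There is no serious obstacle here: the statement is a standard Gronwall-type inequality and the only technical points are the justification of differentiating $Y$ (immediate from continuity of $r$ and $y$) and the Fubini swap (immediate from continuity and compactness of $[t_0,t_1]$). An alternative path would be to apply the classical (homogeneous) Gronwall lemma to the shifted function $\tilde y(t) := y(t) - y_0 - \int_{t_0}^t q(s)\,ds$, but the direct integrating-factor route is the most transparent way to recover the explicit formula displayed in the statement.
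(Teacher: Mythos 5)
Your integrating-factor argument is the classical proof and the computations (integration by parts for the $y_0$-piece, Fubini on the triangle for the $q$-piece, cancellation after adding back $y_0 + \int q$) all check out. Note that the paper does not give a proof at all --- it simply cites Lemma 2.3 of Giga and Kohn \cite{GKcpam89} --- so you have supplied an argument where the paper defers to the literature.

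One point deserves to be made explicit. The step where you "substitute the hypothesis into $Y'(t) = r(t)y(t)$" to obtain
\begin{equation*}
Y'(t) - r(t)Y(t) \leq r(t)\Big[y_0 + \int_{t_0}^t q(s)\,ds\Big]
\end{equation*}
multiplies the assumed inequality $y(t) \leq y_0 + \int_{t_0}^t q + Y(t)$ by $r(t)$, and this preserves the inequality only when $r(t) \geq 0$. The hypothesis $r \geq 0$ is not written in the statement as reproduced here, but without it the conclusion is simply false: with $t_0=0$, $y_0=0$, $q\equiv 0$, $r\equiv -1$ one can construct a continuous $y$ that is very negative near $0$ and then rises to a strictly positive value while still satisfying $y(t) \leq -\int_0^t y(s)\,ds$, contradicting the claimed bound $y(t)\leq 0$. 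The positivity of $r$ is present in the Giga--Kohn version and is satisfied in every application in this paper (the $r(s)$ in the proof of Proposition \ref{prop:noblowup} are all of the form $C\varepsilon/(1-s)$, $C\varepsilon/\sqrt{(\tau-s)(1-s)}$, etc.). You should state $r\geq 0$ as part of the hypothesis, or at least flag that your substitution step requires it; otherwise the argument, though standard, has a genuine unjustified step.
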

\begin{proof} See Lemma 2.3, page 852 in \cite{GKcpam89}.
\end{proof}
We now  give the proof of Proposition \ref{prop:noblowup}. 
\begin{proof}[Proof of Proposition \ref{prop:noblowup}] We proceed in two steps:\\
- \textit{Step 1}:  We first apply Lemma \ref{lemm:vgtau} with $r = 1$ and use the assumption \eqref{con:ineUnU} to get 
$$
\|w_1(\tau)\|_{L^\infty} \leq \epsilon + C\epsilon\int_0^\tau \frac{ds}{(1 - s)\sqrt{\tau - s}} + C\epsilon \int_0^\tau \frac{1}{1 - s}\|w_1(s)\|_{L^\infty}ds,
$$
and 
\begin{align*}
\|g_1(\tau)\|_{L^\infty} &\leq \epsilon + C\epsilon \int_{0}^\tau \left(\frac{1}{\sqrt{(\tau - s)(1 - s)}} + \frac{1}{1 - s}\right)ds \\
&+ C\epsilon\int_0^\tau \left(\frac{1}{\sqrt{(\tau - s)(1 - s)}} + \frac{1}{1 - s}\right)\|g_1(s)\|_{L^\infty}ds.
\end{align*}
Now, applying Lemma \ref{lemm:Gronwall} to $\|w_1(\tau)\|_{L^\infty}$ yields
$$\|w_1(\tau)\|_{L^\infty} \leq (1 - \tau)^{-C\epsilon} \left[\epsilon + C\epsilon \int_0^\tau \frac{ds}{(1 - s)^{1 - C\epsilon}\sqrt{\tau - s}}\right].$$
Using item $(i)$ of Lemma \ref{lemm:GK1}, we have
$$\int_0^\tau \frac{ds}{(1 - s)^{1 - C\epsilon}\sqrt{\tau - s}} = \int_{1 - \tau}^1 \frac{ds}{s^{1 - C\epsilon}\sqrt{s - (1 - \tau)}} \leq C(1 - \tau)^{-\frac{1}{2} + C\epsilon}.$$
Hence, if $\epsilon$ is small enough such that $C\epsilon < \frac{1}{2}$, we have
$$\|w_1(\tau)\|_{L^\infty} \leq C\epsilon(1 - \tau)^{- \frac{1}{2}}, \quad \forall \tau \in[0, 1).$$
Similarly, applying Lemma \ref{lemm:Gronwall} to $\|g_1(\tau)\|_{L^\infty}$ and using item $(ii)$ of Lemma \ref{lemm:GK1} to estimate the integral $\int_0^\tau \frac{ds}{\sqrt{(\tau - s)(1 - s)}} \leq C + |\ln (1 - \tau)|$, we then obtain 
$$\|g_1(\tau)\|_{L^\infty} \leq C(1 - \tau)^{-C\epsilon}\left[\epsilon + C\epsilon\int_0^\tau \left(\frac{1}{(1 - s)^{1/2 - C\epsilon}\sqrt{\tau - s}} + \frac{1}{(1 - s)^{1 - C\epsilon}} \right)ds\right].$$
Using item $(iii)$ of Lemma \ref{lemm:GK1}, we then get 
$$\|g_1(\tau)\|_{L^\infty} \leq C\epsilon(1 - \tau)^{-C\epsilon}, \quad \forall \tau \in [0,1).$$
Therefore, we have by the definition of $\phi_1$ that 
\begin{equation}\label{equ:newEstw1g1}
e^{u(\xi,\tau)} \leq C\epsilon (1 - \tau)^{-\frac{1}{2}}\;\;\text{and}\;\;|\nabla u(\xi,\tau)| \leq C\epsilon(1 - \tau)^{-C\epsilon}, \quad \forall |\xi| < \frac{1}{2}, \; \forall \tau \in [0, 1). 
\end{equation}

\noindent - \textit{Step 2}: Applying Lemma \ref{lemm:vgtau} with $r = \frac{1}{2}$, and using \eqref{equ:newEstw1g1}, we then get 
$$\|w_{1/2}(\tau)\|_{L^\infty} \leq C\epsilon + C\epsilon\int_0^\tau \frac{ds}{\sqrt{(\tau - s)(1 - s)}} + C \int_0^\tau \frac{1}{\sqrt{1 - s}} + \frac{1}{(1 - s)^{2C\epsilon}}\|w_{1/2}(s)\|_{L^\infty}ds,$$
and 
\begin{align*}
\|g_{1/2}(\tau)\|_{L^\infty} \leq C\epsilon &+ C\epsilon\int_0^\tau \left(\frac{1}{(1 - s)^{C\epsilon}\sqrt{\tau - s}} + \frac{1}{(1 - s)^{2C\epsilon}}\right)ds \\
& + C\epsilon\int_0^\tau\left( \frac{1}{\sqrt{1 - s}} + \frac{1}{(1 - s)^{C\epsilon}\sqrt{\tau - s}}\right)\|g_{1/2}(s)\|_{L^\infty}ds.
\end{align*}
From Lemmas \ref{lemm:Gronwall} and \ref{lemm:GK1}, we deduce  
$$\|w_{1/2}(\tau)\|_{L^\infty} \leq C\epsilon(1 + |\ln(1 - \tau)|)\; \text{and} \;\; \|g_{1/2}(\tau)\|_{L^\infty} \leq C\epsilon, \quad \forall \tau \in [0, 1).$$
Since $|\ln (1 -\tau)| \leq (1 - \tau)^{-\epsilon}$ for any $\epsilon > 0$, we then have by the definition of $\phi_{1/2}$
\begin{equation}\label{est:Newu12g12}
e^{u(\xi, \tau)} \leq C\epsilon(1 - \tau)^{-\epsilon}\; \; \text{and}\;\; |\nabla u(\xi, \tau)| \leq C\epsilon, \quad \forall |\xi| < \frac{1}{4}, \; \forall \tau \in [0, 1).
\end{equation}
Repeat this step one again with $r = \frac{1}{4}$ by using \eqref{est:Newu12g12} and \eqref{equ:wtau}, we end up with
$$e^{u(\xi, \tau)} + |\nabla u(\xi, \tau)| \leq C\epsilon, \quad \forall |\xi| < \frac{1}{8}, \; \forall \tau \in [0, 1).$$
This concludes the proof of Proposition \ref{prop:noblowup}.
\end{proof}

\bibliographystyle{alpha}

\def\cprime{$'$}

\vspace*{1cm}
\noindent $\begin{array}{ll}
\textbf{Tej-Eddine Ghoul},&\text{New York University in Abu Dhabi, ​Departement of Mathematics,}\\
\textbf{Van Tien Nguyen:} & \text{Computational Research Building A2,}\\
& \text{Saadiyat Island, P.O. Box 129188, Abu Dhabi, UAE.}\\
& \textit{Email: Teg6@nyu.edu}\\
& \textit{Email: Tien.Nguyen@nyu.edu}\\
&\quad \\
\textbf{Hatem Zaag:}& \text{Universit\'e Paris 13, Institut Galil\'ee, LAGA,}\\
& \text{99 Avenue Jean-Baptiste Cl\'ement,}\\
& \text{93430 Villetaneuse, France.}\\
&\textit{Email: Hatem.Zaag@univ-paris13.fr} 
\end{array}$

\end{document}